\documentclass{birkjour_t2}

\usepackage{amsfonts,amssymb}
\usepackage{amsmath}
\usepackage{graphicx}
\usepackage{cite}
\usepackage{enumerate}
\usepackage{appendix}

\usepackage[backref,colorlinks,linkcolor=blue,anchorcolor=green,citecolor=blue]{hyperref}
\numberwithin{equation}{section}

 \renewcommand{\theequation}{\Alph{section}.\arabic{equation}}

\renewcommand{\theequation}{\arabic{section}.\arabic{equation}}

 \newtheorem{theorem}{Theorem}[section]

 \newtheorem{definition}[theorem]{Definition}
 \newtheorem{remark}[theorem]{Remark}
  
  \newtheorem{lem}{\noindent Lemma}[section]
  \newtheorem{prop}{\noindent Proposition}[section]

  \newtheorem{assumption}{\noindent Assumption}[section]

\allowdisplaybreaks
\begin{document}

\title[Local null controllability of the  complete L.-B.]{Local null controllability of the  complete N-dimensional Ladyzhenskaya-Boussinesq model}

\author[J. C. Barreira]{Jo\~{a}o Carlos Barreira}\email{jcbarreira95@gmail.com}\thanks{Jo\~{a}o Carlos Barreira. Chair for Dynamics, Control, Machine Learning, and Numerics, Alexander von Humboldt-Professorship, Department
of Mathematics, Friedrich-Alexander-Universit\"at Erlangen-N\"urnberg, 91058 Erlangen, Germany and Instituto de Matem\'{a}tica e Estat\'{i}stica, UFF, Niter\'{o}i, 24210-201, RJ, Brasil, 0009-0009-4607-8966.}
\author[J. L\'{i}maco]{Juan L\'{i}maco}\email{
jlimaco@id.uff.br}\thanks{Juan  L\'{i}maco. Instituto de Matem\'{a}tica e Estat\'{i}stica, UFF, Niter\'{o}i, 24210-201, RJ, Brasil, 0000-0001-8695-8612.}

  \begin{abstract}
      This work investigates both local null controllability and large time null controllability for a class of complete Ladyzhenskaya--Boussinesq systems, where the controls are distributed and supported on small subsets of the domain. We incorporate in the temperature equation the nonlinear term $ (\nu_{0} + \nu_{1} \|\nabla y\|_{L^{p}}^{2})Dy : \nabla y$, which reflects more realistic physical behavior, but also introduces significant analytical and control-theoretic challenges. The analysis is carried out separately for the cases $p = 2$ and $3 < p \leq 6$. These equations model a differential turbulence system with temperature coupling and involve both local and nonlocal nonlinearities -- notably, transport terms and a turbulent viscosity depending on the spatially global energy dissipated by the mean flow. The proof of local null controllability relies on classical techniques, including Carleman estimates and Liusternik’s Inverse Mapping Theorem. Nevertheless, the presence of nonlinearities in both the velocity and temperature equations necessitates careful treatment. For the case \( p = 2 \), we also establish null controllability in large time by analyzing the uncontrolled system's asymptotic behavior and subsequently applying the previously obtained local controllability result.

\end{abstract}
\keywords{Null controllability, Nonlinear systems in control theory, Carleman inequalities, Ladyzhenskaya-Boussinesq}

\subjclass{93B05, 93C10, 93C20}

\maketitle

\section{Introduction and main results}\label{intro}

In this work we are interested in studying a system that models viscous flows, where viscosity is in function of the velocity gradient, in which thermal effects are taken into account. We will consider $\Omega\subset\mathbb{R}^{N}$ ($N=2$ or $N=3$) be a non-empty bounded connected open set, with regular boundary $\partial\Omega$ and let $T>0$ be given. We will us denote by $Q$ the cylinder $\Omega\times (0,T)$ with side boundary $\Sigma=\partial\Omega\times (0,T)$.

Let $\omega\subset\Omega$ be a (small) non-empty open set. We denote by $(.,.)$ and $\Vert .\Vert$ respectively the $L^{2}$ scalar product and norm in $\Omega$. We will use $C$ to denote a generic positive constant. Thus, we will study the null controllability for the nonlinear systems:
\begin{equation}\label{lad.boussinesq}
    \left\{
    \begin{array}[c]{lll}
    y_{t}-\nabla\cdot\left( \nu(\nabla y)Dy\right) + (y\cdot\nabla)y + \nabla P = v\tilde{1}_{\omega}+\nu_{0}\theta e_{N}, \,\,\, \nabla\cdot y=0    & \text{in} & Q, \\
    \theta_{t}-\nabla\cdot\left( \nu(\nabla y)\nabla\theta\right) + y\cdot\nabla\theta = v_{0}\tilde{1}_{\omega} +  \nu(\nabla y)Dy:\nabla y &\text{in}& Q,\\
    y(x,t)=0, \theta(x,t)=0 &\text{on}& \Sigma,\\
    y(x,0)=y^{0}(x),\, \theta(x,0)=\theta^{0}(x) &\text{in}& \Omega,
    \end{array}\right.
\end{equation}
where
\begin{equation}\label{nunabla}
    \nu(\nabla y):=\nu_{0}+\nu_{1}\displaystyle\int_{\Omega}\vert\nabla y\vert^{2}dx
\end{equation}
and 
\begin{equation}\label{lad.boussinesq caso extra}
    \left\{
    \begin{array}[c]{lll}
    y_{t}-\nabla\cdot\left( \bar{\nu}(\nabla y)Dy\right) + (y\cdot\nabla)y + \nabla P = v\tilde{1}_{\omega}+\nu_{0}\theta e_{N}, \,\,\, \nabla\cdot y=0    & \text{in} & Q, \\
    \theta_{t}-\nabla\cdot\left( \bar{\nu}(\nabla\theta)\nabla\theta\right) + y\cdot\nabla\theta = v_{0}\tilde{1}_{\omega} +  \bar{\nu}(\nabla y)Dy:\nabla y &\text{in}& Q,\\
    y(x,t)=0, \theta(x,t)=0 &\text{on}& \Sigma,\\
    y(x,0)=y^{0}(x),\, \theta(x,0)=\theta^{0}(x) &\text{in}& \Omega,
    \end{array}\right.
\end{equation}
where $\bar{\nu}(\nabla\varsigma):= \nu_{0}+\nu_{1}\Vert\nabla\varsigma\Vert^{2}_{L^{p}}$, for  $ 3<p\leq 6$, and in both systems
\begin{equation*}
    e_{N}=\left\{ 
    \begin{array}{c}
     (0,1)\,\, \text{if}\,\, N=2, \\
     (0,0,1)\,\, \text{if}\,\, N=3.       
    \end{array}\right.
\end{equation*}
In \eqref{lad.boussinesq} and \eqref{lad.boussinesq caso extra}, $y=y(x,t)$ stands the ``averaged'' velocity field, $\theta=\theta(x,t)$ and $P=P(x,t)$ represent, respectively,  temperature and pressure of a fluid whose particles are in $\Omega$ during the time interval $(0,T)$; $\nu_{0}$ and $\nu_{1}$ are positive constants representing the kinematic viscosity and turbulent viscosity, respectively. $(y^{0},\theta^{0})$ are the initial states, that is to say, the states at time $t=0$; $\tilde{1}_{\omega}\in C^{\infty}_{0}(\Omega)$ such that $0<\tilde{1}_{\omega}\leq 1$ in $\omega$ and $\tilde{1}_{\omega}=0$ outside $\omega$; $Dy$ stands for the symmetrized gradient of $y$: $Dy=\frac{1}{2}(\nabla y +\nabla^{T}y)$ and
\begin{equation}\label{termo quadratico}
Dy:\nabla y := \displaystyle\sum_{i,j=1}^{N}\dfrac{1}{2}\left(\dfrac{\partial y_{j}}{\partial x_{i}} + \dfrac{\partial y_{i}}{\partial x_{j}} \right)\dfrac{\partial y_{i}}{\partial x_{j}}.
\end{equation}

Furthermore, $\omega\times (0,T)$ is the control domain and $v$ (force) and $v_{0}$ (heat sources) represent the controls acting on the system. 

As we are assuming on the right side of the heat equation the quadratic term $\nu(\nabla y)Dy:\nabla y$ or $\bar{\nu}(\nabla y)Dy:\nabla y$ which is related to the work done by viscous forces, the systems \eqref{lad.boussinesq} and \eqref{lad.boussinesq caso extra} can be considered generalizations of the complete Boussinesq model (which corresponds to several conservation laws involving momentum, mass and energy). Moreover, when $\nu_{0}=1$ and $\nu_{1}=0$ in \eqref{lad.boussinesq}, E. Fern\'{a}ndez - Cara et al \cite{CaraJuanDanytrue} proved that such system is locally null controllable. And, when we remove the entire term $\nu(\nabla y)Dy:\nabla y $ from the right side of equation \eqref{lad.boussinesq}, Huaman et al \cite{Huaman} demonstrated that such a system is locally null controllable by means of $N-1$ scalar controls for an arbitrary control domain. The two preceding works cited serve as the primary motivation for the development of this study.

Now notice that, in both \eqref{lad.boussinesq} and \eqref{lad.boussinesq caso extra}, when removing the temperature variable they become a particular case of
\begin{equation}\label{Ladyzhenskaya}
    \hspace{-0.1cm}\left\{\hspace{-0.1cm}
    \begin{array}[c]{lll}
    y_{t}-\nabla\cdot \textbf{T}(y,P) + (y\cdot\nabla)y  = f & \text{in} & Q, \\
    \nabla\cdot y=0    & \text{in} & Q,\\
    \text{etc.},
    \end{array}\right.
\end{equation}
where $f$ is an external force field, $\textbf{T}(y,P) := -P {I} + (\nu_{0} + \nu_{1}\vert Dy\vert^{r-2})Dy$ is the stress tensor with $r>2$ and
\begin{equation*}
    \vert Dy\vert := \left[ \sum_{i,j=1}^{N}\dfrac{1}{2}\left(\dfrac{\partial y_{j}}{\partial x_{i}}+\dfrac{\partial y_{i}}{\partial x_{j}}\right)^{2}\right]^{1/2}.
\end{equation*}

The first mathematical studies on this type of equations were introduced by O. Ladyzhenskaya in the 1960s and can be found in \cite{O.Lady66,O.Lady67, O.Lady68,O.Lady69}. Just as J.‑L. Lions considered in his relevant book \cite{Lions69} the case in which $Dy$ is replaced by $\nabla y$, that is, when  the tensor stress is of the form $\mathbf{{T}_{1}}(y,P) = -P { I} + (\nu_{0} + \nu_{1}\vert \nabla y \vert^{r-2})\nabla y$ and obtained important results of existence, uniqueness and regularity of solutions. For some regularity properties for the solutions of \eqref{Ladyzhenskaya}, see for instance \cite{Veiga}.

When $N=r=3$ the model \eqref{Ladyzhenskaya} is the classical turbulence model approached by  Smagorinsky in \cite{Smagorinsky}.

For additional investigations within the scope of control theory on variations of the \eqref{Ladyzhenskaya} model, we recommend: \cite{PitagorasJuanDenilson} in which analyzed the null controllability property when the stress tensor is the same as that considered by J.-L. Lions, that is, dependent on the state gradient; E. Fern\'{a}ndez - Cara et al \cite{CaraLimacoMenezes}, where the existence of local null controls was guaranteed for the case in which stress tensor is equal to $-PI + (\nu_{0}+\nu_{1}(\Vert Dy\Vert^{2}))Dy)$ with  $\nu_{1}$ being a continuously differentiable function, that is, $0\leq \nu_{1}\leq C$ and $\vert \nu_{1}^{\prime}\vert < C$. In this work, the authors also provided a numerical approximation and illustrated the behavior of the algorithm through examples; again, \cite{Huaman}, in which local null controllability was also established by means of $N-1$ scalar controls for an arbitrary control domain when \eqref{nunabla} is considered in $\mathbf{T}(y,P )$, more specifically, they replaced $\vert Dy\vert^{r-2}$ by $\nu(\nabla y)$. Furthermore, Guerrero and Takahashi \cite{GuerreroTakeo} addressed a similar controllability problem by considering the term $\|\text{curl}(y)\|^{2}$ instead of $|Dy|^{r-2}$, and demonstrated controllability via trajectories. To obtain this result, they established a Carleman estimate for the adjoint of a linear system incorporating a nonlocal spatial term.

We will now list some articles present in the literature that provided relevant controllability results for the Boussinesq system. 

On the exact local controllability of trajectories, \cite{Guerrero} dealt with the Boussinesq system with $N+1$ distributed scalar controls supported in small sets.  In this interesting work, firstly, a Carleman inequality was proved for a linearized version of the Boussinesq system, which leads to its null controllability at any time $T>0$. And from this, the result of exact controllability of trajectories was obtained. Still this context we mention, \cite{EnriqueGuerreroImanuvilovPuel} in which the authors proved that through some hypotheses were imposed on the control domain and the trajectories, the Boussinesq system is locally exactly controllable by $N-1$ scalar controls at a time $T>0$ to the trajectories. Moreover, removing the geometric conditions imposed by \cite{EnriqueGuerreroImanuvilovPuel}, \cite{Carreno2012} concludes the exact local controllability to a particular class of trajectories with internal controls having two vanishing components. Also \cite{FursikovImanuvilov98} and \cite{FursikovImanuvilov99} proved, respectively, the local exact boundary controllability to the trajectories of the Boussinesq system with $N+1$ scalar controls acting over the whole boundary in a bounded domain of $\mathbb{R}^{N}$ $(N=2\, \text{or}\, 3)$ with $C^{\infty}$-boundary and the local exact controllability to the same trajectories with $N+1$ scalar distributed controls, when the torus is the domain.

Considering a generalized Boussinesq equation in a periodic domain, a unit circle in the plane, \cite{Zhang} showed that depending on the location of the control, whether in the entire domain or in a subdomain, and the amplitude of the initial and terminal states it is possible conclude that the system is globally exactly controllable.

Finally, regarding the concept of insensitive controls, that is, controls that insensitize some functional that depends on the velocity field, see  \cite{Carreno2017} which showed for the Boussinesq system, without any control used on the temperature equation, the existence of the insensitizing controls such that  the control acting on the fluid equation can be chosen to have one vanishing component. And, for the system with controls acting on both equations, \cite{CarrenoGuerreroGueye} demonstrated the existence of insensitive controls with two vanishing components, when the case is three-dimensional  and  for the case two-dimensional the authors concluded that no is required control into the velocity equation.

The following vector spaces, frequently used in the context of incompressible fluids, which will be used throughout the article are:

\[
H:=\lbrace u\in L^{2}(\Omega)^{N}: \nabla\cdot u=0\, \text{in}\, \Omega, u\cdot\eta=0\, \text{on}\, \partial\Omega\rbrace
\]
and
\[
V^{p}:=\lbrace u\in W^{1,p}_{0}(\Omega)^{N}:\nabla\cdot u=0\, \text{in}\, \Omega\rbrace,
\]
where $\eta$ is the normal vector exterior to $\partial\Omega$ and $W^{1,p}_{0}(\Omega)$ is the closure of the space of test functions in $\Omega$, $\mathcal{D}(\Omega)$, in $W^{1,p}(\Omega)$(the standard Sobolev space). In particular, when $p=2$ we will denote $V=V^{p}$.

For $N=2$, $y^{0}\in V$, $\theta^{0}\in W_{0}^{1,3/2}(\Omega)$, and any $v\in L^{2}(\omega\times (0,T))^{N}$, $v_{0}\in L^{2}(\omega\times (0,T))$ sufficiently small in their respective spaces, (\ref{lad.boussinesq}) possesses exactly a strong solution $(y,p,\theta)$ with 
\begin{equation}\label{regularidade das sol.}
    \left\{
    \begin{array}{l}
     y\in L^{2}(0,T;H^{2}(\Omega)^{N}\cap V)\cap C^{0}([0,T];V),\, \, y_{t}\in L^{2}(0,T;H)    \vspace{0.1cm}  \\
     \theta\in L^{2}(0,T;W^{2,3/2}(\Omega)),\,\, \theta_{t}\in L^{2}(0,T;L^{3/2}(\Omega)).     
    \end{array}\right.
\end{equation}
For $N=3$, this is true if $y^{0}$, $\theta^{0}$, $v$ and $v_{0}$ are sufficiently small in their respective spaces. The proof of these statements can be seen later in the Appendix \ref{Appendix} and will be used opportunely to achieve a result of local null controllability  in a long time, as stated in Theorem \ref{Teo large time}.
\begin{definition}
    Let any non-empty open set $\omega\subset\Omega$. It will be said that \eqref{lad.boussinesq} (respectively \eqref{lad.boussinesq caso extra}) is locally null-controllable at time $T>0$ if there exists $\delta>0$ such that, for every $(y^{0},\theta^{0})\in V\times W_{0}^{1,3/2}(\Omega)$ (respectively $(y^{0},\theta^{0})\in V^{p}\times W_{0}^{1,p}(\Omega)$) with
\[
\Vert (y^{0},\theta^{0})\Vert_{V\times W_{0}^{1,3/2}(\Omega)}< \delta\, (\text{respectively}\, \Vert (y^{0},\theta^{0})\Vert_{V^{p}\times W_{0}^{1,p}(\Omega)}< \delta),
\]
there exists controls $v\in L^{2}(\omega\times (0,T))^{N}$, $v_{0}\in L^{2}(\omega\times (0,T))$ and associated solutions $(y,p,\theta)$ 
satisfying 
\begin{equation}\label{nulo}
    y(x,T)=0\,\,\, \text{and}\,\,\, \theta(x,T)=0\,\,\, \text{in}\,\,\, \Omega.
\end{equation}
\end{definition}

Thus, the main results of this paper are presented as follows:

\begin{theorem}\label{Teo principal control nulo}
The nonlinear system \eqref{lad.boussinesq} is locally null-controllable at any $T>0$.
\end{theorem}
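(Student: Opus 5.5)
The plan is the standard linearization/inverse-mapping strategy in the spirit of \cite{CaraJuanDanytrue,CaraLimacoMenezes,Huaman}.

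\emph{Linearization.} Since $\nu(\nabla y)=\nu_{0}+\nu_{1}\|\nabla y\|^{2}$, linearizing \eqref{lad.boussinesq} at zero gives the coupled linear Boussinesq-type Stokes--heat system
\[
y_t-\nu_0\Delta y+\nabla P=v\tilde 1_\omega+\nu_0\theta e_N+f,\quad \nabla\cdot y=0,\qquad \theta_t-\nu_0\Delta\theta=v_0\tilde 1_\omega+f_0 .
\]
Here we use $\nabla\cdot Dy=\frac12\Delta y$ for divergence-free fields, with the constant absorbed. The terms $(y\cdot\nabla)y$, $y\cdot\nabla\theta$, $\nu_1\|\nabla y\|^2\Delta y$, $\nu_1\|\nabla y\|^2\Delta\theta$ and $\nu(\nabla y)Dy:\nabla y$ are all at least quadratic.

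\emph{Carleman estimate.} For the adjoint system
\[
-\varphi_t-\nu_0\Delta\varphi+\nabla\pi=g,\quad \nabla\cdot\varphi=0,\qquad -\psi_t-\nu_0\Delta\psi=g_0+\nu_0\varphi\cdot e_N,
\]
I will prove a Carleman inequality with weights of Fursikov--Imanuvilov type vanishing exponentially at $t=0$ and $t=T$ (weights with $\ell(t)$ constant near $0$). The estimate should bound weighted norms of $(\varphi,\psi)$ by weighted norms of $(g,g_0)$ plus local observations of $\varphi$ and $\psi$ in $\omega$. It follows by combining the known Stokes Carleman estimate (Imanuvilov/Fern\'andez-Cara--Guerrero--Imanuvilov--Puel) with the heat Carleman estimate and absorbing the coupling $\nu_0\varphi\cdot e_N$, which is lower order. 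Because controls act on both equations, no component elimination is needed.

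\emph{Controls in weighted spaces.} From this estimate, via Lax--Milgram, I will construct for every small $(y^0,\theta^0)$ and right-hand side $(f,f_0)$ in a weighted space $e^{s\beta}L^2$ a controlled solution $(y,\theta,v,v_0)$ with $y(T)=\theta(T)=0$. The solution lies in a Banach space $E$ of functions with weighted $L^2$ bounds and weighted $L^2(H^2)\cap L^\infty(V)$ regularity for $y$. For $\theta$ I will take $L^2(W^{2,3/2})\cap L^\infty(W_0^{1,3/2})$ weighted regularity, obtained through parabolic $L^{3/2}$ maximal regularity after multiplying by weights.

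\emph{Inverse mapping.} I will define
\[
\mathcal A(y,\theta,P,v,v_0)=\big(\text{LHS}-\text{RHS of the nonlinear equations},\, y(0),\theta(0)\big)
\]
from $E$ to $F\times V\times W_0^{1,3/2}$. I will then show that $\mathcal A$ is $C^1$, with $\mathcal A'(0)$ onto by the previous step, and apply Liusternik's theorem. The main obstacle is showing that the nonlinear terms map $E$ continuously (bilinearly/trilinearly) into $F$ with the weights. The key term is $\|\nabla y\|^2 Dy:\nabla y$, which lies only in $L^{3/2}$-type spaces (this is why $\theta$ is measured in $W^{2,3/2}$). Using $\nabla y\in L^\infty(L^2)\cap L^2(L^6)$ gives $|\nabla y|^2\in L^2(L^{3/2})$ in time; at the quadratic order this requires $\nu_0 Dy:\nabla y$ to fit in weighted $L^2(L^{3/2})$. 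So I must choose the weights for $f_0$ weaker than those for $y$ (e.g.\ $f_0\in e^{2s\beta}$-type spaces when $y\sim e^{-s\beta}$ norms), checking that $\rho_0\le C\rho^2$ for the weight functions. Similar weight-splitting handles the cubic and quintic terms $\nu_1\|\nabla y\|^2\Delta y$ and $\nu_1\|\nabla y\|^2 Dy:\nabla y$, using that $\|\nabla y(t)\|^2$ is bounded by weighted energy norms. Finally, local surjectivity yields $\delta$ such that small initial data admit controls with $\mathcal A=(0,y^0,\theta^0)$, which gives \eqref{nulo}.
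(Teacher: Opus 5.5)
Your proposal is correct and follows essentially the same route as the paper. The shared steps are:
\begin{itemize}
\item a Guerrero/Fern\'andez-Cara--Guerrero--Imanuvilov--Puel type Carleman estimate for the coupled adjoint;
\item Lax--Milgram controls with time-dependent weights (your condition $\rho_0\le C\rho^2$ is the paper's $\rho_3\le C\mu_2^2$);
\item $\theta$ measured in weighted $L^2(0,T;W^{2,3/2}(\Omega))$ via $L^p$--$L^q$ parabolic regularity;
\item the embedding $L^\infty(0,T;L^2)\cap L^2(0,T;H^1)\hookrightarrow L^4(0,T;L^3)$ for the $Dy:\nabla y$ terms;
\item Liusternik's theorem.
\end{itemize}

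One small slip: maximal $L^2(L^{3/2})$ regularity does not give $\theta\in L^\infty(0,T;W^{1,3/2}_0(\Omega))$, because the trace space is $B^{1}_{3/2,2}\supsetneq W^{1,3/2}$. Leave that bound out of $E$; you never need it, since $y\cdot\nabla\theta$ is controlled by $\|y\|_{L^3}\|\nabla\theta\|_{L^3}$ together with $W^{1,3/2}(\Omega)\hookrightarrow L^3(\Omega)$.
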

\begin{theorem}\label{Teo extra control nulo}
The nonlinear system \eqref{lad.boussinesq caso extra} is locally null-controllable at any $T>0$.
\end{theorem}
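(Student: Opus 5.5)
The plan for Theorem \ref{Teo extra control nulo} mirrors the one for Theorem \ref{Teo principal control nulo}: linearize around zero, prove null controllability of the linearized system with controls in weighted spaces, and conclude with Liusternik's Inverse Mapping Theorem. The linearization of \eqref{lad.boussinesq caso extra} at $(0,0)$ is
\[
y_t-\nu_0\Delta y+\nabla P=v\tilde{1}_{\omega}+\nu_0\theta e_N+f,\quad \nabla\cdot y=0,\qquad \theta_t-\nu_0\Delta\theta=v_0\tilde{1}_{\omega}+f_0 .
\]
This holds because the nonlocal viscosity contributes $\nu_1\|\nabla y\|_{L^p}^2\Delta y$, which is cubic, and $\bar\nu(\nabla y)Dy:\nabla y$ is at least quadratic. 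So the linear part is a Stokes system coupled through $\theta e_N$ to a heat equation.

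\textbf{Step 1: Carleman estimate.} For the adjoint system
\[
-\varphi_t-\nu_0\Delta\varphi+\nabla\pi=g,\quad \nabla\cdot\varphi=0,\qquad -\psi_t-\nu_0\Delta\psi=g_0+\nu_0\varphi_N,
\]
we combine the Stokes Carleman inequality (Imanuvilov--Puel--Fern\'andez-Cara--Guerrero type) with the heat Carleman inequality. This gives a global inequality with weights $e^{-2s\alpha}$ and local observations $\iint_{\omega\times(0,T)}$ of $\varphi$ and $\psi$. The coupling term $\nu_0\varphi_N$ is absorbed by taking $s,\lambda$ large, since it appears with a lower power of $s\xi$ than the left side. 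We then convert this into weights that do not vanish at $t=0$, using energy estimates on $[0,T/2]$. This yields an observability inequality.

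\textbf{Step 2: Controls and regularity in weighted spaces.} By a Lax--Milgram/duality argument we obtain controls $(v,v_0)$ and states $(y,\theta)$ with $\rho(y,\theta)\in L^2$ and $\hat\rho(v,v_0)\tilde1_\omega\in L^2$, for weights $\rho,\hat\rho$ that blow up as $t\to T$. This forces $(y,\theta)(T)=0$. Weighted energy estimates, multiplying by slightly weaker weights, then give $y\in L^2(H^2)\cap C^0(V)$ and $y_t\in L^2(H)$. Because the data now lie in $V^p\times W_0^{1,p}$, we also use maximal $L^2$--$L^p$-type parabolic regularity so that $\nabla y$, $\nabla\theta$ are controlled in $L^\infty(L^p)$, which is needed to make sense of $\|\nabla\cdot\|_{L^p}$. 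Concretely, the state space $Y$ is chosen to include $\rho_* y\in L^2(0,T;W^{2,p})\cap L^\infty(0,T;W^{1,p})$, and the same for $\theta$.

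\textbf{Step 3: Inverse mapping.} Define
\[
\mathcal A(y,P,\theta,v,v_0)=\big(\text{the full nonlinear equations},\ y(0),\ \theta(0)\big)\in F\times V^p\times W^{1,p}_0,
\]
where $F$ is a weighted $L^2$ space. Then $\mathcal A$ is $C^1$ and $\mathcal A'(0)$ is onto by Step 2, so Liusternik's theorem gives local surjectivity, and small initial data give $(y,\theta)(T)=0$.

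\textbf{Main obstacle.} The main obstacle is proving that $\mathcal A$ is well-defined and $C^1$. The terms to control are $\|\nabla y\|_{L^p}^2\Delta y$, $\|\nabla\theta\|_{L^p}^2\Delta\theta$, and $\bar\nu(\nabla y)Dy:\nabla y$ in the weighted target $F$. The map $z\mapsto\|\nabla z\|_{L^p}^2$ is $C^1$ on $W^{1,p}$ for $p>2$, with derivative $2\|\nabla z\|_{L^p}^{2-p}\int|\nabla z|^{p-2}\nabla z:\nabla w$, so differentiability holds. The growth of weights requires splitting the weights, with $\rho\le C\rho_*^2$ or $\rho_*^3$, so that products of two or three weighted factors land in $F$. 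The quadratic term $Dy:\nabla y$ is handled in $L^2(L^2)$ via $\nabla y\in L^\infty(L^p)\cap L^2(L^\infty)$, using $W^{2,p}\hookrightarrow W^{1,\infty}$ for $p>3$; this is exactly where the restriction $3<p\le6$ enters. I expect these weighted multilinear estimates, particularly matching the regularity needed for $\theta$'s nonlocal viscosity, to be the delicate part.
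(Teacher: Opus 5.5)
Your overall architecture (Carleman estimate for the coupled Stokes--heat adjoint, weighted controls via Lax--Milgram, Liusternik's theorem) is the one the paper uses. However, Steps 2--3 have a genuine gap in the function-space setup.

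\textbf{The target space is inconsistent with the state space.} You take the target $F$ to be a weighted $L^2$ space and treat $Dy:\nabla y$ ``in $L^2(L^2)$''. At the same time you require $\rho_*y,\rho_*\theta\in L^2(0,T;W^{2,p})\cap L^\infty(0,T;W^{1,p})$ in the state space. With these choices $\mathcal{A}'(0)$ cannot be onto. For a source that is only (weighted) $L^2$ in space, the linear Stokes/heat solution lies in $L^2(0,T;H^2)$, not in $L^2(0,T;W^{2,p})$ for $p>3$.

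Nor can you drop the $W^{1,p}$/$W^{2,p}$ requirement and stay in the $L^2(H^2)\cap L^\infty(H^1)$ framework of Theorem \ref{Teo principal control nulo}. By Gagliardo--Nirenberg, $\|\nabla y\|_{L^p}^2\|\Delta y\|$ is only controlled by $\|y\|_{H^1}^{2-2a}\|y\|_{H^2}^{1+2a}$ with $a>0$ (e.g.\ $a=\frac32-\frac3p$ when $N=3$). Since $1+2a>1$, this is not square integrable in time given only $y\in L^2(0,T;H^2)$. So $\mathcal{A}$ would not even map into a weighted $L^2$ target. The target must therefore be $L^p$-based in space, as in the paper's $\mathcal{R}_N$ ($\rho_3F_i\in L^q(0,T;L^p)$).

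\textbf{The missing idea is the $L^p$ integrability of the controls.} Once the target is $L^p$-based, the obstacle to $L^p$ maximal regularity is not the initial data, as your Step 2 suggests. It is the forcing terms $v\tilde{1}_{\omega}$ and $v_0\tilde{1}_{\omega}$: the Lax--Milgram construction only gives $\rho_2 v,\rho_2 v_0\in L^2$, so $L^q$--$L^p$ maximal regularity cannot be invoked for the states. The paper closes this gap through the representation $v=-\rho_2^{-2}\varphi\tilde{1}_{\omega}$, $v_0=-\rho_2^{-2}\psi\tilde{1}_{\omega}$.
\begin{itemize}
\item $\kappa\rho_2^{-2}\varphi$ and $\kappa\rho_2^{-2}\psi$ solve backward Stokes and heat problems with weighted-$L^2$ sources. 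This yields \eqref{regularidade do controle v}--\eqref{regularidade do controle v0}, in particular $\kappa v,\kappa v_0\in C^0([0,T];H^1)\hookrightarrow L^q(0,T;L^6)$.
\item Only then can one bootstrap to the bounds of Proposition \ref{Proposicao com rhobarratheta}. This is done first for $\kappa\theta$ and then for $\kappa y$, since $\theta e_N$ feeds the velocity equation.
\item This extra control regularity must also be built into the state space (the norm of $\mathcal{U}_N$ contains $(\kappa v)_t$, $\kappa\Delta v$, etc.) for surjectivity to hold.
\end{itemize}
This step is also where the upper bound $p\le 6$ originates: $H^1\hookrightarrow L^6$ is applied to the weighted controls and to terms like $\kappa_t\theta$. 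The embedding $W^{2,p}\hookrightarrow W^{1,\infty}$ that you cite only accounts for $p>3$.
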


In order to prove Theorems \ref{Teo principal control nulo} and \ref{Teo extra control nulo}, we will first see a result of null controllability for the linear system associated with (\ref{lad.boussinesq}) and \eqref{lad.boussinesq caso extra}
\begin{equation}
    \left\{
    \begin{array}{lll}
     \mathcal{L}_{1}y+ \nabla P = v\tilde{1}_{\omega} + \nu_{0}\theta e_{N} + F_{1},\,\,\, \nabla\cdot y = 0   & \text{in} & Q, \\
      \mathcal{L}_{2}\theta = v_{0}\tilde{1}_{\omega} + F_{2}    & \text{in} & Q,\\
      y(x,t)=0,\,\,\, \theta(x,t)=0 &\text{on}& \Sigma,\\
      y(x,0)=y^{0}(x),\,\,\, \theta(x,0)=\theta^{0}(x) &\text{in}& \Omega,
\end{array}\right.\label{lad.Bous.Linear}
\end{equation}
where $\mathcal{L}_{1}y:=y_{t} -\nu_{0}\Delta y$, $\mathcal{L}_{2}\theta:=\theta_{t} -\nu_{0}\Delta\theta$ and $F_1$ and $F_2$ are appropriate functions decaying exponentially when $t\to T^{-}$ (see Proposition \ref{proposição 2.1} below).

Once the null controllability of \eqref{lad.Bous.Linear} has been proven, we will define a Banach space that will contain a remodeling of the null controllability problem. In other words, we rewrite the null controllability property of \eqref{lad.boussinesq} and \eqref{lad.boussinesq caso extra}), separately, as abstract equations (see \eqref{Mapa F} and \eqref{Mapa F extra}) in well chosen spaces of ``admissible'' state-controls; see \eqref{espaço EN} and \eqref{espaço ZN} for \eqref{lad.boussinesq} and \eqref{espaço UN} and \eqref{espaço RN} for \eqref{lad.boussinesq caso extra}. In particular, through the definitions applied to the equations and ``admissible'' spaces, it is possible to show that such applications are well defined and $C^{1 }$ and, also its derivatives analyzed at zero are surjective. This will allow us to achieve the local null controllability of the systems in question.

Furthermore, when $N=2$ we also show that for certain conditions in the initial data it is possible to obtain a result of local null controllability in a large time for the solutions of the system \eqref{lad.boussinesq}. To do this, we will show that such solutions (for $v=v_0=0$)  have asymptotic behavior when $t\to\infty$. Therefore, we have the following theorem:

\begin{theorem}\label{Teo large time}[Large time Null-Controllability]
For $N=2$, let $(y^{0},\theta^{0})\in V\times H^{1}_{0}(\Omega)$ and $r>0$ a positive constant given by Theorem \ref{existence and uniq.} (see Appendix \ref{Appendix}) such that $\Vert(y^{0},\theta^{0})\Vert_{V\times H^{1}_{0}(\Omega)}<r$, then there exists a sufficiently large time $T > 0$ such that the
nonlinear system \eqref{lad.boussinesq} is null-controllable at $T$.
\end{theorem}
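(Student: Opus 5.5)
The plan is to let the system evolve freely, so that it decays into the region where local null controllability holds, and then switch on the controls. Take $v=v_{0}=0$ on a first interval $(0,T_{0})$. By Theorem \ref{existence and uniq.} (Appendix \ref{Appendix}), the smallness $\Vert(y^{0},\theta^{0})\Vert_{V\times H^{1}_{0}(\Omega)}<r$ gives a global strong solution $(y,P,\theta)$ on $(0,\infty)$ of the uncontrolled system \eqref{lad.boussinesq}, with the regularity \eqref{regularidade das sol.} on every finite interval. The aim is to show that
\[
\Vert y(t)\Vert_{V}+\Vert\theta(t)\Vert_{W^{1,3/2}_{0}(\Omega)}\longrightarrow 0\quad (t\to\infty).
\]
Once this holds, fix any $T_{1}>0$ (say $T_{1}=1$) and let $\delta=\delta(T_{1})$ be the constant from Theorem \ref{Teo principal control nulo}. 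Choose $T_{0}$ so large that $\Vert(y(T_{0}),\theta(T_{0}))\Vert_{V\times W_{0}^{1,3/2}(\Omega)}<\delta$. Since the system is autonomous, Theorem \ref{Teo principal control nulo} applied on $(T_{0},T_{0}+T_{1})$ with initial datum $(y(T_{0}),\theta(T_{0}))$ produces controls that drive the state to zero at $T=T_{0}+T_{1}$. Concatenating the zero control on $(0,T_{0})$ with these controls gives admissible controls in $L^{2}(\omega\times(0,T))$, and the glued state is a solution on $(0,T)$ because it is continuous at $T_{0}$ in the relevant spaces.

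The decay is proved in three energy estimates.

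\textbf{Velocity in $H$.} Test the velocity equation with $y$. Using $\nabla\cdot y=0$, the convective term vanishes and $(Dy,\nabla y)=\frac12\Vert\nabla y\Vert^{2}$, so
\[
\frac12\frac{d}{dt}\Vert y\Vert^{2}+\frac{\nu_{0}}{2}\Vert\nabla y\Vert^{2}\le \nu_{0}\Vert\theta\Vert\,\Vert y\Vert .
\]

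\textbf{Velocity in $V$.} Test with $-P_{H}\Delta y$, where $P_{H}$ denotes the Leray projector. In 2D, Ladyzhenskaya's inequality gives
\[
\vert((y\cdot\nabla) y,P_{H}\Delta y)\vert\le C\Vert y\Vert^{1/2}\Vert\nabla y\Vert\,\Vert\Delta y\Vert^{3/2}.
\]
The nonlocal viscosity only helps, since $\nu(\nabla y)\ge\nu_{0}$. Smallness of the data, which is kept for all time by the appendix bounds, lets the viscous term absorb this convective term.

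\textbf{Temperature in $H^{1}_{0}$.} Test with $\theta$ and with $-\Delta\theta$. The source $\nu(\nabla y)Dy:\nabla y$ is bounded in $L^{1}$, and also in $L^{2}$ through
\[
\Vert\nabla y\Vert_{L^{4}}^{2}\le C\Vert\nabla y\Vert\,\Vert y\Vert_{H^{2}} .
\]
This gives an inequality of the form
\[
\frac{d}{dt}\Vert\nabla\theta\Vert^{2}+\nu_{0}\Vert\Delta\theta\Vert^{2}\le C\big(\Vert\nabla y\Vert^{2}\Vert y\Vert_{H^{2}}^{2}+\Vert y\Vert_{L^{\infty}}^{2}\Vert\nabla\theta\Vert^{2}\big).
\]

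The key structural difficulty is the coupling in both directions: $\theta$ forces $y$ through $\nu_{0}\theta e_{N}$, while $y$ forces $\theta$ through the quadratic dissipation term. The natural Lyapunov functional is therefore
\[
E(t)=\Vert y\Vert_{V}^{2}+\varepsilon\Vert\theta\Vert_{H^{1}_{0}}^{2}.
\]
The temperature equation is dissipative on its own, and the heating term is quadratic in the small quantity $\nabla y$. I would choose $\varepsilon$ small so that the term $\nu_{0}\Vert\theta\Vert\,\Vert y\Vert$ (after Poincaré and Young) is absorbed, and use smallness of $E$ to absorb the cubic and quartic terms. The target is an inequality $E'+\alpha E\le 0$ as long as $E$ stays small, and hence exponential decay.

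The main obstacle is exactly this closing step. The linear buoyancy coupling does not become small with the data, so the weights have to be balanced. I would first test the velocity equation with $\lambda y$ plus $-P_{H}\Delta y$ and weight the temperature estimates by $\varepsilon\ll\lambda^{-1}$, checking that Young's inequality leaves positive dissipation in every component. If this fails, the fallback is weaker but sufficient. Since $\int_{0}^{\infty}(\Vert\nabla y\Vert^{2}+\Vert\nabla\theta\Vert^{2})\,dt<\infty$ from the appendix bounds, one gets $\liminf_{t\to\infty}E(t)=0$, and for the argument above it is enough to find a single time $T_{0}$ at which $E(T_{0})$ is below $\delta$. Finally, the embedding $H^{1}_{0}(\Omega)\hookrightarrow W^{1,3/2}_{0}(\Omega)$ converts smallness of $E$ into the smallness required by Theorem \ref{Teo principal control nulo}.
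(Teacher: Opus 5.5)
Your architecture is the paper's: let the system evolve freely, restart at a time where the state lies in the $\delta$-ball of Theorem~\ref{Teo principal control nulo}, glue the zero control to the local one, and use $H^{1}_{0}(\Omega)\hookrightarrow W^{1,3/2}_{0}(\Omega)$. The gap is in the closing step you single out: your weight goes the wrong way, and as written that step fails.

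When you test the velocity equation with $Ay$ (or $\lambda y$), Young's inequality leaves a buoyancy remainder $c\,\nu_{0}\Vert\theta\Vert^{2}$ (respectively $c\,\lambda\nu_{0}\Vert\theta\Vert^{2}$). Its coefficient does not shrink with the data. The only term that can absorb it is the temperature dissipation $\varepsilon\nu_{0}\Vert\nabla\theta\Vert^{2}\geq \varepsilon\nu_{0}c_{\Omega}^{-1}\Vert\theta\Vert^{2}$. So you need $\varepsilon\gtrsim 1+\lambda$, i.e.\ a \emph{large} weight on the temperature. With $\varepsilon$ small, or $\varepsilon\ll\lambda^{-1}$, the combined inequality keeps a positive coefficient in front of $\Vert\theta\Vert^{2}$ and cannot give $E'+\alpha E\leq 0$, however small the data.

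Nothing in the other direction calls for a small weight. The heating term is quadratic, so testing with $-\varepsilon\Delta\theta$ produces $C\varepsilon\,\nu(\nabla y)^{2}\Vert\nabla y\Vert^{2}\Vert y\Vert_{H^{2}}^{2}$. That term, like the transport contribution $C\varepsilon\Vert Ay\Vert^{2}\Vert\nabla\theta\Vert^{2}$, is absorbed by $\nu_{0}\Vert Ay\Vert^{2}$ once $\varepsilon(\Vert\nabla y\Vert^{2}+\Vert\nabla\theta\Vert^{2})$ is small. That is a smallness condition on $E$ with $\varepsilon$ fixed. This reflects the triangular structure of the linearization at zero: $\theta$ decays on its own and drives $y$.

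The paper's functional $\lambda_{1}\Vert\nabla y\Vert^{2}+\Vert\theta\Vert^{2}+\Vert\nabla\theta\Vert^{2}$ has the reverse of your weighting. The multiplier sits on the velocity estimate, and the temperature dissipation pays for the buoyancy (only $\frac{\nu_{0}}{4}\Vert\nabla\theta_{m}\Vert^{2}$ of it survives in Estimate I). A continuity argument then gives $\Phi_{m}'+\Phi_{m}/\hat{C}\leq 0$, and $T^{\ast}$ is chosen from the resulting explicit rate.

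Your fallback is a legitimate, slightly more economical finish. A single restart time suffices. If $\int_{0}^{\infty}(\Vert\nabla y\Vert^{2}+\Vert\nabla\theta\Vert^{2})\,dt<\infty$, then the set of times where $\Vert\nabla y(t)\Vert^{2}+\Vert\nabla\theta(t)\Vert^{2}$ lies below any prescribed level has infinite measure. So $T_{0}$ can be chosen among the a.e.\ times at which $\theta(T_{0})\in H^{1}_{0}(\Omega)$, and no decay rate is needed; the paper instead reads $T^{\ast}$ off the exponential bound of Lemma~\ref{energy decay}.

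But notice where the integrability comes from. It is Estimate I of Appendix~\ref{Appendix} with $v=v_{0}=0$, whose right-hand side and smallness conditions do not involve $T$. You should state that $T$-independence explicitly when you claim a global solution on $(0,\infty)$. That estimate is derived precisely with the correctly weighted functional, so the fallback does not get around the issue; it imports the right weighting from the appendix. Reverse the direction of $\varepsilon$, or cite that $T$-independent appendix bound, and your argument goes through.
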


This paper is organized as follows. In Section \ref{Sec.1}, we recall some well-known results concerning parabolic problems, Stokes systems, and Carleman estimates, which play a crucial role in establishing the null controllability of system \eqref{lad.Bous.Linear}. Section \ref{Sec.2} is devoted to proving the null controllability of system \eqref{lad.Bous.Linear}, following the approach in \cite{Guerrero}. We also derive weighted Banach space estimates for the solutions of the linear system \eqref{lad.Bous.Linear} and for the corresponding controls \( v \) and \( v_0 \), which will be instrumental in Section \ref{Sec.3} for establishing the null controllability of systems \eqref{lad.boussinesq} and \eqref{lad.boussinesq caso extra}. In Section \ref{Sec.3}, we prove the null controllability of the aforementioned systems using Liusternik's Inverse Mapping Theorem. Section \ref{Sec.4} is dedicated to the proof of Theorem \ref{Teo large time}, which is based on a lemma ensuring that, under suitable conditions on the initial data, the solution of system \eqref{lad.boussinesq} (without controls $v$ and $v_0$) exhibits an asymptotic behavior as $t \to \infty$. Additionally, Section \ref{Comentarios Adicionais} contains further remarks and a discussion of some interesting open problems. Finally, Appendix \ref{Appendix} presents auxiliary results on the existence and uniqueness of solutions for system \eqref{lad.boussinesq}, along with the proof of the lemma stated in Section \ref{Sec.4}.

\section{Some previous results}\label{Sec.1}

Our goal in the present section is to present well-posedness results for parabolic problems and Stokes systems, as well as Carleman estimates for the adjoint system of \eqref{lad.Bous.Linear} which is given by
\begin{equation}
    \left\{
    \begin{array}{lll}
    \mathcal{L}_{1}^{\ast}\varphi + \nabla \pi = G_{1} ,\,\,\, \nabla\cdot\varphi = 0   & \text{in} & Q, \\
     \mathcal{L}_{2}^{\ast}\psi = \varphi e_{N} + G_{2}    & \text{in} & Q,\\
      \varphi(x,t)=0,\,\,\, \psi(x,t)=0 &\text{on}& \Sigma,\\
      \varphi(x,T)=\varphi^{T}(x),\,\,\, \psi(x,T)=\psi^{T}(x) &\text{in}& \Omega,
\end{array}\right.\label{adjunto de lad. Bous.}
\end{equation}
where $\mathcal{L}_{1}^{\ast}\varphi:= -\varphi_{t} -\nu_{0}\Delta \varphi$, $\mathcal{L}_{2}^{\ast}\psi:= -\psi_{t} -\nu_{0}\Delta \psi$, $\varphi^{T}\in H$, $\psi^{T}\in L^{2}(\Omega)$, $G_{1}\in L^{2}(Q)^{N}$ and $G_{2}\in L^{2}(Q)$.

\subsection{Well-posedness results} The results of this subsection will be applied when we study the null controllability of system \eqref{lad.Bous.Linear} (Section \ref{Sec.2}), since once we have the appropriate regularity for $\theta^{0}$ and $y^{0}$ the results described here can be applied to equation formed by $\eqref{lad.Bous.Linear}_{1}$ and $\eqref{lad.Bous.Linear}_{2}$.

The first lemma we mention here is applied to parabolic equations in $L^{p}-L^{q}$ spaces and its verification can be based on \cite{Robert}:
\begin{lem}\label{parabolic in LpLq} Let $1<r,s<\infty$ and suppose that $\phi^{0}\in W^{1,s}(\Omega)$ and $h\in L^{r}(0,T;L^{s}(\Omega))$. Then the problem
\begin{equation*}
    \left\{\begin{array}{lll}
        \phi_{t}-\Delta\phi = h &\text{in}& Q,\\
        \phi = 0 &\text{on}& \Sigma,\\
        \phi(0)=\phi^{0} &\text{in}& \Omega
    \end{array}\right.
\end{equation*}
admits a unique solution 
\begin{equation*}
    \phi\in W^{1,r}(0,T;L^{s}(\Omega))\cap L^{r}(0,T;W^{2,s}(\Omega)),
\end{equation*}
Furthermore, there exist a constant $C>0$ such that
\begin{equation}\label{estimate de Robert}
    \Vert\phi_{t}\Vert_{L^{r}(0,T;L^{s}(\Omega))} + \Vert\Delta\phi\Vert_{L^{r}(0,T;L^{s}(\Omega))}\leq C(\Vert\phi^{0}\Vert_{W^{1,s}(\Omega)} + \Vert h\Vert_{L^{r}(0,T;L^{s}(\Omega))}).
\end{equation}
\end{lem}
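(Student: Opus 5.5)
The plan is to split the solution into an inhomogeneous part with zero initial datum and a homogeneous part carrying the initial datum, and to estimate the two pieces separately: maximal $L^{r}$-regularity for the first, analytic-semigroup theory for the second. Let $A=-\Delta$ with domain $D(A)=W^{2,s}(\Omega)\cap W^{1,s}_{0}(\Omega)$ in $X=L^{s}(\Omega)$. Since $\partial\Omega$ is regular and $1<s<\infty$, $-A$ generates a bounded analytic semigroup $e^{-tA}$ on $X$, and $A$ is invertible. We write $\phi=\phi_{1}+\phi_{2}$, where
\[
\phi_{1}(t)=\int_{0}^{t}e^{-(t-\tau)A}h(\tau)\,d\tau, \qquad \phi_{2}(t)=e^{-tA}\phi^{0}.
\]
Uniqueness is immediate: the difference of two solutions solves the homogeneous problem with zero datum in the class $W^{1,r}(0,T;X)\cap L^{r}(0,T;D(A))$, so it is given by the semigroup formula and vanishes.

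For $\phi_{1}$ I would invoke maximal $L^{r}$-regularity of the Dirichlet Laplacian on $L^{s}(\Omega)$. One route is Dore--Venni: $A$ admits bounded imaginary powers of angle $<\pi/2$ (Seeley; Pr\"uss--Sohr), and $L^{s}$ is UMD, which gives maximal regularity. Alternatively one can quote Lamberton's result or the approach of \cite{Robert} via Gaussian bounds of the heat kernel. This yields $\phi_{1}\in W^{1,r}(0,T;X)\cap L^{r}(0,T;D(A))$ and
\[
\Vert\phi_{1,t}\Vert_{L^{r}(0,T;L^{s})}+\Vert\Delta\phi_{1}\Vert_{L^{r}(0,T;L^{s})}\leq C\Vert h\Vert_{L^{r}(0,T;L^{s})},
\]
with $C$ independent of $h$. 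Because $A$ is invertible, $r$ is independent of $T$ on bounded intervals.

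For $\phi_{2}$, the standard characterization of the trace space says that $\phi_{2}\in W^{1,r}(0,T;X)\cap L^{r}(0,T;D(A))$ if and only if $\phi^{0}\in (X,D(A))_{1-1/r,r}$, with equivalence of norms. Indeed,
\[
\Vert A e^{-tA}\phi^{0}\Vert_{L^{r}(0,\infty;X)}\simeq \Vert\phi^{0}\Vert_{(X,D(A))_{1-1/r,r}} .
\]
It therefore remains to embed $W^{1,s}$ into this interpolation space. The Dirichlet Laplacian on a smooth domain has bounded imaginary powers, so $D(A^{1/2})=[X,D(A)]_{1/2}=W^{1,s}_{0}(\Omega)$, and the reiteration theorem gives
\[
W^{1,s}_{0}(\Omega)\hookrightarrow (X,D(A))_{\alpha,r}\quad\text{whenever }\alpha<1/2 .
\]
Hence we obtain $\phi^{0}\in(X,D(A))_{1-1/r,r}$ when $1-1/r<1/2$, that is $r<2$. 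In the borderline case $r=2$, one uses $(X,D(A))_{1/2,2}$ and checks the inclusion directly when $s\le 2$, for instance by the $L^{s}$ square-function estimate for $A^{1/2}e^{-tA}$. Summing the two estimates gives \eqref{estimate de Robert}.

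The main obstacle is exactly this initial-data step. As literally written, with $\phi^{0}\in W^{1,s}(\Omega)$ and arbitrary $r$, the statement is too strong: for large $r$ the trace space $B^{2-2/r}_{s,r}$ is strictly smaller than $W^{1,s}$. A compatibility condition $\phi^{0}|_{\partial\Omega}=0$ is also needed. I would therefore prove the lemma under the implicit hypotheses $\phi^{0}\in W^{1,s}_{0}(\Omega)$ and $r\le 2$, or more generally $\phi^{0}\in(X,D(A))_{1-1/r,r}$. This covers every later use in the paper, namely $r=2$ with $s=3/2$ or $s=p$ and zero Dirichlet traces. I would make that restriction explicit rather than attempt the general claim.
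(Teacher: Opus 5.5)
The paper gives no argument for this lemma beyond saying its verification can be based on the cited reference. Your decomposition $\phi=\phi_1+\phi_2$ is the standard argument such a citation stands for: maximal $L^r$-regularity (Dore--Venni or Lamberton) handles the forcing, and the real-interpolation trace characterization handles the datum. For the version you isolate, the argument is correct.

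Your diagnosis of the printed statement is also right: as written it is false.
\begin{itemize}
\item Already for $r=s=2$, $h=0$ and $\phi^0\equiv 1\in H^1(\Omega)$ there is no solution in the stated class. Indeed $H^1(0,T;L^2)\cap L^2(0,T;H^2\cap H^1_0)\hookrightarrow C([0,T];H^1_0(\Omega))$ would force $\phi^0\in H^1_0(\Omega)$.
\item For $r>2$ even $W^{1,s}_0$ data fail. The trace space $(L^s,W^{2,s}\cap W^{1,s}_0)_{1-1/r,r}$ has smoothness $2-2/r>1$.
\item Your reiteration argument for $r<2$, and the Minkowski/square-function argument for $r=2$, $s\le 2$, are fine.
\end{itemize}
One slip: the sentence ``$r$ is independent of $T$'' should say that the constant $C$ is uniform in $T$, which is what invertibility of $A$ gives.

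Where you go wrong is the closing claim that your restricted lemma covers every later use in the paper. It does cover the applications with $(r,s)=(2,3/2)$ and $W^{1,3/2}_0$ data. However, the paper also invokes the lemma with $(r,s)=(q,p)$, where $3<p\le 6$ and $p<q<\infty$. In these uses the datum $\theta^0$, or $\kappa(0)\theta^0$ with $\kappa(0)>0$, lies merely in $W^{1,p}_0(\Omega)$. This happens in the null-controllability result for the linear system with $W^{1,p}$ data, which asserts $\theta\in L^q(0,T;W^{2,p}(\Omega))$, and in Proposition~\ref{Proposicao com rhobarratheta}.

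In those cases $2-2/q>4/3>1$. By the necessity half of the very trace characterization you quote, no correct version of the lemma can give $L^q(0,T;W^{2,p})$ regularity from $W^{1,p}_0$ data. Those applications need the stronger hypothesis $\theta^0\in(L^p,W^{2,p}\cap W^{1,p}_0)_{1-1/q,q}$. Separately, the case ``$r=2$ with $s=p$'' that you list is ruled out by your own borderline criterion $s\le 2$, and the paper does not use that case anyway.
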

\begin{remark}
Remembering that the Sobolev space $W^{1,r}(0,T;X)$, where $X$ denote a real Banach space with norm $\Vert .\Vert_{X}$, consists of all functions $u\in L^{r}(0,T;X)$ such that $u^{\prime}=u_{t}$ exists in the weak sense and belongs to $L^{r}(0,T;X)$. Furthermore,
\begin{equation*}
   \Vert u\Vert_{W^{1,r}(0,T;X)}:=\left\{ \begin{array}{lll}
   \left[\int_{0}^{T}(\Vert u(t)\Vert_{X}^{r} + \Vert u^{\prime}(t)\Vert^{r}_{X})dt\right]^{1/r} &(1\leq r< \infty)&\vspace{0.1cm}\\
{\operatorname{ess}\sup}_{[0,T]}(\Vert u(t)\Vert_{X} + \Vert u^{\prime}(t)\Vert_{X})  &(r=\infty).&
    \end{array}\right.
\end{equation*}
More details about this space can be found at \cite{Evans}.
\end{remark}

The second result is valid for Stokes systems with homogeneous Dirichlet boundary conditions and can be found in \cite{Temam}:
\begin{lem}\label{Stokes em V}For every $T>0$, $u^{0}\in V$ and $f\in L^{2}(Q)^{N}$, there exists a unique solution $(u,q)\in \left(L^{2}(0,T;H^{2}(\Omega)^{N}\cap V)\cap L^{\infty}(0,T;V)\times L^{2}(0,T;H^{1}(\Omega)\right)$ to the Stokes system 
\begin{equation*}
    \left\{\begin{array}{lll}
      u_{t}-\Delta u + \nabla q = f,\,\,\, \nabla\cdot u=0 &\text{in}& Q,\\
      u=0 &\text{on}&\Sigma,\\
      u(0)= u^{0} &\text{in}&\Omega.
    \end{array}\right.
\end{equation*}
\end{lem}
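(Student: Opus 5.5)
The statement to prove is Lemma \ref{Stokes em V}. It is the classical strong-solution theory for the evolutionary Stokes problem. I would prove it with a Faedo--Galerkin scheme built on the eigenfunctions of the Stokes operator $A=-\mathbb{P}\Delta$, where $\mathbb{P}$ is the Leray projector onto $H$.

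\textbf{Step 1: approximate problems.} Since $\Omega$ is bounded and regular, $A$ with domain $D(A)=H^{2}(\Omega)^{N}\cap V$ is self-adjoint, positive and has compact inverse on $H$. Hence there is an orthonormal basis $\{w_{j}\}$ of $H$ made of eigenfunctions, $Aw_{j}=\lambda_{j}w_{j}$, and this basis is also orthogonal in $V$. For each $m$, I look for $u_{m}(t)=\sum_{j\le m}g_{jm}(t)w_{j}$ solving
\[
(u_{m}',w_{j})+(\nabla u_{m},\nabla w_{j})=(f,w_{j}),\qquad j=1,\dots,m,
\]
with $u_{m}(0)=P_{m}u^{0}$, where $P_{m}$ is the orthogonal projection onto $\mathrm{span}\{w_{1},\dots,w_{m}\}$. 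This is a linear ODE system with $L^{2}$ coefficients in time, so it has a unique absolutely continuous solution on $[0,T]$.

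\textbf{Step 2: a priori estimates.} Because the $w_{j}$ are eigenfunctions, $Au_{m}$ is an admissible test function. Testing with $Au_{m}$ and with $u_{m}'$ gives
\[
\tfrac12\tfrac{d}{dt}\Vert\nabla u_{m}\Vert^{2}+\Vert Au_{m}\Vert^{2}=(f,Au_{m}),\qquad \Vert u_{m}'\Vert^{2}+\tfrac12\tfrac{d}{dt}\Vert\nabla u_{m}\Vert^{2}=(f,u_{m}').
\]
Young's inequality and $\Vert\nabla P_{m}u^{0}\Vert\le\Vert\nabla u^{0}\Vert$ then yield uniform bounds for $u_{m}$ in $L^{\infty}(0,T;V)$, for $Au_{m}$ in $L^{2}(0,T;H)$ and for $u_{m}'$ in $L^{2}(0,T;H)$. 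All three are controlled by $C(\Vert u^{0}\Vert_{V}+\Vert f\Vert_{L^{2}(Q)})$.

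\textbf{Step 3: regularity of the limit and recovery of the pressure.} This is the step I expect to need the most care, because it is where the domain regularity enters. I would invoke Cattabriga's regularity for the stationary Stokes problem, $\Vert v\Vert_{H^{2}}\le C\Vert Av\Vert$ for $v\in D(A)$. With it, the bound on $Au_{m}$ becomes a uniform bound in $L^{2}(0,T;H^{2}(\Omega)^{N}\cap V)$.

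Passing to weak and weak-$\ast$ limits gives $u$ in the stated class with $u'+Au=\mathbb{P}f$ in $L^{2}(0,T;H)$. The initial condition holds because $u\in C([0,T];H)$ and $P_{m}u^{0}\to u^{0}$.

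For the pressure, the function $g:=f-u'+\Delta u$ lies in $L^{2}(Q)^{N}$ and satisfies $\mathbb{P}g=0$ for a.e. $t$. By the Helmholtz decomposition, $g=\nabla q$ with $q(t)\in H^{1}(\Omega)$, normalized to have zero mean. Since $q$ depends linearly and boundedly on $g$, we get $q\in L^{2}(0,T;H^{1}(\Omega))$.

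\textbf{Step 4: uniqueness.} The difference of two solutions solves the problem with $f=0$ and $u^{0}=0$. Testing with the difference and using that the pressure term vanishes against divergence-free fields with zero trace gives an energy identity, hence zero velocity. The pressure is then unique up to a function of time, which the zero-mean normalization removes.
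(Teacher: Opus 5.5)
Your proposal is correct. The paper gives no proof of its own and simply cites Temam; your argument (Galerkin approximation on Stokes eigenfunctions, energy estimates from testing with $Au_m$ and $u_m'$, Cattabriga's $H^2$ regularity to upgrade the bound on $Au_m$, and recovery of a zero-mean pressure from the Helmholtz decomposition) is the classical proof behind that citation. The one step worth spelling out is the initial condition: since $u_m\rightharpoonup u$ weakly in $H^1(0,T;H)\hookrightarrow C([0,T];H)$, one gets $u_m(0)\rightharpoonup u(0)$ in $H$, while $u_m(0)=P_m u^0\to u^0$, so $u(0)=u^0$.
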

The next result, proven in \cite{Giga}, concerns the regularity of the solutions of the Stokes system in $L^{p}-L^{q}$ spaces (see also \cite{Guerrero} for additional comments):
\begin{lem}\label{Stokes para Lp} Let $1<p_{1},p_{2}<\infty$ and suppose that $u^{0}\in W^{1,p_{2}}(\Omega)^{N}$ and $f\in L^{p_{1}}(0,T;L^{p_{2}}(\Omega))$. Then, the weak solution $u\in L^{2}(0,T;V)\cap L^{\infty}(0,T;H)$ of system 
\begin{equation*}
    \left\{\begin{array}{lll}
      u_{t}-\Delta u + \nabla q = f,\,\,\, \nabla\cdot u=0 &\text{in}& Q,\\
      u=0 &\text{on}&\Sigma,\\
      u(0) = u^{0} &\text{in}&\Omega
    \end{array}\right.
\end{equation*}
actually verifies, together with a pressure $q$, that
\begin{equation*}
    (u,\nabla q)\in \left(L^{p_{1}}(0,T;W^{2,p_{2}}(\Omega)^{N})\cap W^{1,p_{1}}(0,T;L^{p_{2}}(\Omega)^{N})\right)\times L^{p_{1}}(0,T;L^{p_{2}}(\Omega)^{N}).
\end{equation*}
Moreover, there exists a positive constant $C$ just depending on $\Omega$ such that
\begin{equation*}
    \begin{array}{l}
         \Vert u\Vert_{L^{p_{1}}(0,T;W^{2,p_{2}}(\Omega)^{N})\cap W^{1,p_{1}}(0,T;L^{p_{2}}(\Omega)^{N})}+\Vert\nabla q\Vert_{L^{p_{1}}(0,T;L^{p_{2}}(\Omega)^{N})}\\
         \leq C(\Vert f\Vert_{L^{p_{1}}(0,T;L^{p_{2}}(\Omega)^{N})}+\Vert u^{0}\Vert_{W^{1,p_{2}}(\Omega)^{N}}).
    \end{array}
\end{equation*}
\end{lem}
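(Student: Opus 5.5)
The statement is the $L^{p_1}$--$L^{p_2}$ maximal regularity result for the nonstationary Stokes problem. I would reduce it to an abstract maximal regularity property of the Stokes operator. Let $P_{p_2}$ be the Helmholtz (Leray) projection on $L^{p_2}(\Omega)^N$, which is bounded on bounded smooth domains. Let $A_{p_2}=-P_{p_2}\Delta$ be the Stokes operator with domain $D(A_{p_2})=W^{2,p_2}(\Omega)^N\cap W^{1,p_2}_0(\Omega)^N\cap L^{p_2}_\sigma(\Omega)$. Applying $P_{p_2}$ to the system turns it into the abstract Cauchy problem
\[
u'+A_{p_2}u=P_{p_2}f,\qquad u(0)=u^0 .
\]
Once $u$ is known, the pressure is recovered from $\nabla q=(I-P_{p_2})(f+\Delta u)$. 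Here I would use that $\Delta u-P_{p_2}\Delta u$ is a gradient, by the Helmholtz decomposition. Taking norms then bounds $\|\nabla q\|_{L^{p_1}(L^{p_2})}$ by $\|f\|_{L^{p_1}(L^{p_2})}+\|u\|_{L^{p_1}(W^{2,p_2})}$.

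The key steps, in order, are these.
\begin{enumerate}
\item Show that $A_{p_2}$ generates a bounded analytic semigroup on $L^{p_2}_\sigma(\Omega)$. I would take the resolvent estimate $|\lambda|\|(\lambda+A_{p_2})^{-1}\|\le C$ on a sector; this is the Solonnikov/Giga resolvent analysis.
\item Obtain maximal $L^{p_1}$-regularity for the inhomogeneous problem with $u^0=0$. The most direct route, and the one I would take following \cite{Giga}, is to prove that $A_{p_2}$ admits bounded imaginary powers, $\|A_{p_2}^{is}\|\le Ce^{\theta|s|}$ with $\theta<\pi/2$. The Dore--Venni theorem then yields
\[
\|u'\|_{L^{p_1}(L^{p_2})}+\|A_{p_2}u\|_{L^{p_1}(L^{p_2})}\le C\|P_{p_2}f\|_{L^{p_1}(L^{p_2})},
\]
since $L^{p_2}$ is UMD. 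An alternative would be $\mathcal R$-sectoriality together with Weis' theorem.
\item Handle the initial datum. The free evolution $e^{-tA_{p_2}}u^0$ lies in the maximal regularity class exactly when $u^0$ belongs to the real interpolation space $(L^{p_2}_\sigma,D(A_{p_2}))_{1-1/p_1,p_1}$. Its norm there is controlled by $\|u^0\|_{W^{1,p_2}}$ only when $1-1/p_1\le 1/2$, or at least where the trace space is comparable. For general $p_1$ one needs $u^0$ in the correct trace space. I would therefore read the statement's $W^{1,p_2}$ hypothesis, together with divergence-free and zero boundary trace, as being applied in the regime used later in the paper, where $p_1=2$ and the trace space is essentially $V^{p_2}$.
\item Convert $\|A_{p_2}u\|$ into $\|u\|_{W^{2,p_2}}$ using the stationary Stokes $W^{2,p}$ estimates of Cattabriga type.
\item Identify this strong solution with the given weak solution via uniqueness of weak solutions in $L^2(0,T;V)\cap L^\infty(0,T;H)$.
\end{enumerate}

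The main obstacle is step 2, the bounded imaginary powers, or equivalently $\mathcal R$-boundedness, of the Stokes operator on a bounded domain in $L^{p_2}$. This requires a careful parametrix construction near the boundary, handling the nonlocal pressure by localization and reduction to half-space problems. This is the deep content of \cite{Giga}, and in the paper I would simply cite it. The other steps are routine functional analysis.
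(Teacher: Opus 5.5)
The paper gives no argument of its own for this lemma; it only refers to \cite{Giga}. Your steps 1, 2, 4 and 5 (Leray projection, bounded imaginary powers of the Stokes operator with Dore--Venni, Cattabriga-type $W^{2,p}$ estimates, identification with the weak solution) are the standard route to that result, so for the forcing term your argument is the one behind the citation.

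Your caveat in step 3 is correct, and it is the substantive point. Any $u\in W^{1,p_1}(0,T;L^{p_2})\cap L^{p_1}(0,T;W^{2,p_2})$ has $u(0)\in (L^{p_2},W^{2,p_2})_{1-1/p_1,p_1}=B^{2-2/p_1}_{p_2,p_1}(\Omega)$. For $p_1>2$ this space has smoothness $2-2/p_1>1$ and does not contain $V^{p_2}$. Taking $f=0$ and $u^0\in V^{p_2}$ outside it shows the lemma as stated is false.

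Your endpoint is also too generous. At $p_1=2$ the trace space is of type $B^{1}_{p_2,2}$, which contains $W^{1,p_2}$ only when $p_2\le 2$. So it is not ``essentially $V^{p_2}$'' for $p_2>2$. What your argument actually proves is the estimate with $\Vert u^0\Vert_{(L^{p_2}_\sigma,D(A_{p_2}))_{1-1/p_1,p_1}}$ on the right. This reduces to the stated estimate only for $p_1<2$, or $p_1=2\ge p_2$.

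The genuine gap is your way out. The paper never uses this lemma with $p_1=2$. It is invoked in two places: the null-controllability proposition for data in $V^{p}\times W^{1,p}_{0}(\Omega)$, and the proof of \eqref{estimate para y em Vp} in Proposition \ref{Proposicao com rhobarratheta}. Both times $(p_1,p_2)=(q,p)$ with $3<p\le 6$ and $q>p$, and the initial datum is $\kappa(0)y^{0}$ with only $y^{0}\in V^{p}$. Since $\ell\equiv T^{2}/4$ on $[0,T/2]$, $\kappa$ is a positive constant near $t=0$, so the weights give no help there.

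This is exactly the regime in which, by your own observation, $W^{1,p}$ data do not suffice. Restricting to $p_1=2$ therefore does not give the lemma in the form the paper needs. No proof can, without strengthening the hypothesis to $y^{0}\in(L^{p}_\sigma,D(A_{p}))_{1-1/q,q}$. The same remark applies to Lemma \ref{parabolic in LpLq} where it is used with $r=q$ for $\theta$.
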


\subsection{Carleman estimates}
We dedicate this subsection to Carleman estimate, which will be fundamental to achieving the null controllability of \eqref{lad.Bous.Linear} for suitable $F_1$ and $F_2$ (Section \ref{Sec.2}).

Let's introduce a new non-empty open set $\omega_{0}\Subset\omega$. Due to Fursikov and Imanuvilov \cite{Fursikov} we have the following result:
\begin{lem}
There exists a function $\eta^{0}\in C^{2}(\overline{\Omega})$ satisfying
\begin{equation*}
    \left\{
    \begin{array}{ll}
     \eta^{0}(x)>0, & \forall x\in \Omega,  \\
      \eta^{0}(x)=0,  & \forall x\in\partial\Omega,\\
      \vert\nabla\eta^{0}(x)\vert>0, & \forall x\in\overline{\Omega}\setminus\omega_{0}.
    \end{array}\right.
\end{equation*}
\end{lem}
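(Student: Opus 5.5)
The plan is the classical Fursikov--Imanuvilov construction: first build a domain-dependent function in a neighbourhood of the boundary, then glue it to a Morse function inside and push all its critical points into $\omega_{0}$.

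\emph{Step 1 (boundary layer).} Since $\partial\Omega$ is regular (at least $C^{2}$), the distance function $d(x)=\operatorname{dist}(x,\partial\Omega)$ is $C^{2}$ in a collar $\Omega_{\varepsilon}=\{x\in\overline{\Omega}: d(x)<\varepsilon\}$ for $\varepsilon$ small. It satisfies $d=0$ on $\partial\Omega$, $d>0$ in $\Omega$ and $|\nabla d|=1$ in $\Omega_{\varepsilon}$. Choose a smooth extension $\tilde\eta\in C^{2}(\overline{\Omega})$ with $\tilde\eta=d$ in $\Omega_{\varepsilon/2}$ and $\tilde\eta\ge c>0$ in $\Omega\setminus\Omega_{\varepsilon/2}$. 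Next perturb $\tilde\eta$ in the compact set $K=\overline{\Omega\setminus\Omega_{\varepsilon/2}}$ into a Morse function while keeping it $C^{2}$-close to $\tilde\eta$. This is possible because Morse functions are dense in $C^{2}$, and one can use $\tilde\eta+\langle a,x\rangle$ multiplied by a cutoff with small generic $a$. The result has finitely many nondegenerate critical points $x_{1},\dots,x_{m}$, all in the interior, and it is still positive in $\Omega$ and equal to $d$ near $\partial\Omega$.

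\emph{Step 2 (moving the critical points).} Because $\Omega$ is connected, for each $j$ there is a smooth simple curve $\gamma_{j}$ inside $\Omega\setminus\overline{\Omega_{\varepsilon/4}}$ joining $x_{j}$ to a point of $\omega_{0}$; the curves can be taken pairwise disjoint, as is possible since $N\ge2$. Along each curve construct a diffeomorphism $\Phi_{j}$ of $\overline{\Omega}$ that equals the identity outside a thin tubular neighbourhood of $\gamma_{j}$ and sends $x_{j}$ into $\omega_{0}$; it is obtained as the time-one flow of a vector field compactly supported near $\gamma_{j}$ and tangent to it. Set $\Phi=\Phi_{m}\circ\cdots\circ\Phi_{1}$ and $\eta^{0}=\hat\eta\circ\Phi^{-1}$, where $\hat\eta$ is the Morse function from Step~1. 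The chain rule gives $\nabla\eta^{0}(x)=(D\Phi^{-1}(x))^{T}\nabla\hat\eta(\Phi^{-1}(x))$. Since $D\Phi^{-1}$ is invertible, the critical points of $\eta^{0}$ are exactly the points $\Phi(x_{j})\in\omega_{0}$. Because $\Phi$ is the identity near $\partial\Omega$, $\eta^{0}=d$ there, so $\eta^{0}=0$ on $\partial\Omega$ and $|\nabla\eta^{0}|=1>0$ near the boundary. Positivity in $\Omega$ is preserved under composition with a diffeomorphism of $\Omega$, and $\eta^{0}\in C^{2}(\overline{\Omega})$. Hence $|\nabla\eta^{0}|>0$ on $\overline{\Omega}\setminus\omega_{0}$.

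\emph{Main obstacle.} The delicate part is Step 2. The tubes must avoid the boundary collar, must not contain other critical points, and the flows must stay in $\Omega$. I would handle this by first picking the curves generically, which gives disjointness and avoidance of the finitely many other critical points when $N\ge 2$. I would then choose tube radii smaller than the distance from each curve to $\partial\Omega_{\varepsilon/4}$ and to the remaining critical points. The Morse perturbation in Step~1 is standard, and nondegeneracy is in fact not needed, only finiteness of the critical set, which Morse density supplies.
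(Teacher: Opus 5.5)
Your construction is correct and is essentially the classical Fursikov--Imanuvilov argument. The paper gives no proof of its own and simply cites Fursikov and Imanuvilov for this lemma; that argument takes the distance to $\partial\Omega$ near the boundary, perturbs it to have finitely many critical points, and transports them into $\omega_{0}$ by diffeomorphisms that are the identity near $\partial\Omega$, exactly as you do. Two details deserve an extra line: the curves $\gamma_{j}$ need connectedness of $\{d>\varepsilon/4\}$, which for small $\varepsilon$ follows from the normal retraction in the collar rather than directly from connectedness of $\Omega$, and the cutoff in the perturbation $\chi\langle a,x\rangle$ should equal $1$ on $K$ and make its transition inside $\{\varepsilon/4<d<\varepsilon/2\}$, where $|\nabla\tilde\eta|=1$ dominates the perturbation for small $|a|$.
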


Let us introduce the function $\ell\in C^{\infty}([0,T])$ such that
\begin{equation*}
\ell(t) = \left\{
    \begin{array}{ll}
      \dfrac{T^{2}}{4} , & 0\leq t\leq T/2, \vspace{0.1cm} \\
       t(T-t), & T/2 < t\leq T.  
    \end{array}\right.
\end{equation*}
Thus, for all $\lambda>0$ and $m>4$, we consider the following weight functions:
\begin{equation*}
\begin{array}{c}  \alpha(x,t)=\dfrac{e^{5/4\lambda m\Vert\eta^{0}\Vert_{\infty}}-e^{\lambda(m\Vert\eta^{0}\Vert_{\infty}+\eta^{0}(x))}}{\ell(t)^{4}}, \,\,\,  \xi(x,t)=\dfrac{e^{\lambda(m\Vert\eta^{0}\Vert_{\infty}+\eta^{0}(x))}}{\ell(t)^{4}},\vspace{0.13cm}\\
   \alpha^{\ast}(t)=\displaystyle\max_{x\in\overline{\Omega}}\alpha(x,t),\,\,\,   \xi^{\ast}(t)=\displaystyle\min_{x\in\overline{\Omega}}\xi(x,t),\vspace{0.13cm}\\
   \hat{\alpha}(t)=\displaystyle\min_{x\in\overline{\Omega}}\alpha(x,t), \,\,\, \hat{\xi}(t)=\displaystyle\max_{x\in\overline{\Omega}}\xi(x,t).
\end{array}
\end{equation*}
The constant $m$ will be chosen large enough, in particular such that
\begin{equation}    \label{des.16>15}
{18\hat{\alpha} > 17\alpha^{\ast}}\,\,\, 
 \,\,\,\text{in}\,\,\, (0,T).
\end{equation}

We will present a Carleman estimate given by the following lemma:
\begin{lem}\label{Lema 2 de Guerrero}
    For any sufficiently large $s$ and $\lambda$, there exists a positive constant $C$ (depending on $T$, $s$ and $\lambda$) such that, for all $\varphi^{T}\in H$ and $\psi^{T}\in L^{2}(\Omega)$ and any $G_{1}\in L^{2}(Q)^{N}$ and $G_{2}\in L^{2}(Q)$, the solution to (\ref{adjunto de lad. Bous.}) verifies
    \begin{equation}\label{Carleman estimate}
        \begin{array}{c}
         \Vert\varphi(.,0)\Vert^{2} + \Vert\psi(.,0)\Vert^{2} + \displaystyle\iint\limits_{Q}e^{-2s\alpha}[\xi^{3}(\vert\varphi\vert^{2}+\vert\psi\vert^{2})+\xi(\vert\nabla\varphi\vert^{2}+\vert\nabla\psi\vert^{2})]dx\,dt\\
          \leq C\left(\hspace{0.18cm}\displaystyle\iint\limits_{\omega\times(0,T)}e^{-8s\hat{\alpha}+6s\alpha^{\ast}}\hat{\xi}^{16}(\vert\varphi\vert^{2}+\vert\psi\vert^{2})dx\,dt\right.\\
          +\left.\displaystyle\iint\limits_{Q}e^{-4s\hat{\alpha}+2s\alpha^{\ast}}\hat{\xi}^{15/2}(\vert G_{1}\vert^{2}+\vert G_{2}\vert^{2})dx\,dt\right).
        \end{array}
    \end{equation}
\end{lem}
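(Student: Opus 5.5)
This estimate is essentially the Carleman inequality of Guerrero for the linearized Boussinesq adjoint. The one new feature is that the weight $\ell$ is constant on $[0,T/2]$, which lets us recover $\Vert\varphi(.,0)\Vert^2+\Vert\psi(.,0)\Vert^2$. The plan has three steps: a Carleman estimate with weights blowing up at both ends of $(0,T)$, then an energy argument to pass to $\ell$, then absorption of the coupling term.

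\emph{Step 1 (symmetric weights).} Take weights $\tilde\alpha,\tilde\xi$ built as $\alpha,\xi$ but with $\ell$ replaced by $t(T-t)$. Apply the known Carleman inequality for the Stokes adjoint $\mathcal{L}_1^{\ast}\varphi+\nabla\pi=G_1$ (Guerrero \cite{Guerrero}, with the Fursikov--Imanuvilov function $\eta^0$). It bounds the global weighted terms in $\varphi$ by a local term of the form $\iint_{\omega\times(0,T)}e^{-8s\hat{\tilde\alpha}+6s\tilde\alpha^{\ast}}\hat{\tilde\xi}^{16}|\varphi|^2$ plus $\iint e^{-4s\hat{\tilde\alpha}+2s\tilde\alpha^{\ast}}\hat{\tilde\xi}^{15/2}|G_1|^2$. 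These mixed exponentials and powers come from eliminating the pressure and the nonlocal part of the estimate, as in \cite{Guerrero}.

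For $\psi$, apply the standard heat Carleman estimate to $\mathcal{L}_2^{\ast}\psi=\varphi e_N+G_2$. Its right-hand side contains $\iint e^{-2s\tilde\alpha}|\varphi_N|^2$ and a local term in $\psi$. The coupling term $\iint e^{-2s\tilde\alpha}|\varphi|^2$ is absorbed by the left-hand term $s^3\lambda^4\iint e^{-2s\tilde\alpha}\tilde\xi^3|\varphi|^2$ once $s$ is large, since $\tilde\xi$ is bounded below. The local terms in $\psi$ with weight $e^{-2s\tilde\alpha}\tilde\xi^3$ are dominated by the weaker weight $e^{-8s\hat{\tilde\alpha}+6s\tilde\alpha^\ast}\hat{\tilde\xi}^{16}$. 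This holds because $-2\tilde\alpha\le -8\hat{\tilde\alpha}+6\tilde\alpha^\ast$ and $\tilde\xi\le C\hat{\tilde\xi}$. Likewise, $e^{-2s\tilde\alpha}|G_2|^2$ is dominated by the $G$ weight. Adding the two estimates and absorbing gives the estimate with $\tilde\alpha$ and without the $t=0$ terms.

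\emph{Step 2 (from $t(T-t)$ to $\ell$).} On $[T/2,T]$ the weights coincide, so nothing changes there. On $[0,T/2]$, take a cutoff $\chi\in C^1$ equal to $1$ on $[0,T/2]$ and vanishing near $3T/4$. Write the energy estimates for $(\chi\varphi,\chi\psi)$: the Stokes energy for $\chi\varphi$ (the pressure disappears since $\nabla\cdot\varphi=0$), then the heat energy for $\chi\psi$. This gives
\[
\Vert\varphi(0)\Vert^2+\Vert\psi(0)\Vert^2+\int_0^{T/2}\!\!(\Vert\varphi\Vert^2_{H^1}+\Vert\psi\Vert^2_{H^1})\le C\Big(\int_{T/2}^{3T/4}\!\!(\Vert\varphi\Vert^2+\Vert\psi\Vert^2)+\int_0^{3T/4}\!\!(\Vert G_1\Vert^2+\Vert G_2\Vert^2)\Big).
\]
On $[T/2,3T/4]$ the weights are bounded above and below, so the first term on the right is controlled by the Step 1 left-hand side. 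On $[0,3T/4]$ the $\ell$-weights are constants bounded below, so the $G$ term is dominated by the right-hand side of \eqref{Carleman estimate}. Finally, on $[0,T/2]$ the $\ell$-weights are constant, so the left-hand integrals over $[0,T/2]$ are bounded by the energy quantity just estimated. On the right-hand side, the local term with symmetric weight is bounded by the one with $\ell$, because $\ell\ge t(T-t)$ makes the symmetric weight smaller near $t=0$. The constant depends on $s,\lambda,T$, as allowed.

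\emph{Main obstacle.} The delicate part is Step 1 for $\varphi$: obtaining the Stokes Carleman estimate with only a local $L^2$ observation of $\varphi$ and source weight $e^{-4s\hat\alpha+2s\alpha^\ast}\hat\xi^{15/2}$. I would follow \cite{Guerrero}. Apply $\Delta$ to kill the pressure and use heat Carleman estimates for $\Delta\varphi$. Handle the boundary terms with regularity estimates for $\hat\xi$-weighted Stokes problems (Lemma \ref{Stokes em V}, Lemma \ref{Stokes para Lp}). Then remove local higher derivatives by local energy arguments; this is what produces the exponent $-8s\hat\alpha+6s\alpha^\ast$ and the power $\hat\xi^{16}$. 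Condition \eqref{des.16>15} ensures these mixed weights remain decaying, so the comparisons in the absorption steps are legitimate.
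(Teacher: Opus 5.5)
Your proposal is correct and follows the route the paper relies on. The paper only cites Lemma 2 of \cite{Guerrero}, and your outline is how that lemma is derived, specialized to the zero trajectory: the symmetric-weight Stokes Carleman estimate, then a heat Carleman estimate for $\psi$ in which the one-way coupling term $\iint e^{-2s\tilde\alpha}|\varphi|^2$ is absorbed, then cutoff energy estimates that pass to $\ell$ and recover $\Vert\varphi(\cdot,0)\Vert^2+\Vert\psi(\cdot,0)\Vert^2$. The one detail you left implicit is the comparison of the Step~1 source weight $e^{-4s\hat{\tilde\alpha}+2s\tilde\alpha^{\ast}}\hat{\tilde\xi}^{15/2}$ with its $\ell$-counterpart on $(0,T/2]$; this works exactly as for the local term, since $4\hat{\tilde\alpha}>2\tilde\alpha^{\ast}$ keeps that weight bounded there while the $\ell$-weight is a positive constant.
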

\begin{proof}
    See, Lemma 2 in \cite{Guerrero}.
    \hfill
\end{proof}

\section{Null controllability of linear system \eqref{lad.Bous.Linear}}\label{Sec.2}

This section is dedicated to the null controllability of the linear system \eqref{lad.Bous.Linear}. We emphasize that two null controllability results will be obtained, since we will consider different cases for the initial data $y^{0}, \theta^{0}$ and the functions $F_{1}, F_{2}$. More precisely, in the first case we will consider more common spaces in control theory, such as $H^{1}_{0}(\Omega)$ and $L^{2}(Q)$ while in the second case we will work with spaces less usual ones, like $ W_{0}^{1,p}(\Omega)$ and $L^{q}(0,T;L^{p}(\Omega))$, for $3<p \leq 6$ and $p< q <\infty$.

Let us set
\begin{equation}\label{pesos control nulo}
    \left\{
    \begin{array}{l}
\rho = e^{s\alpha}\xi^{-3/2},\,\,     \rho_{1} = e^{2s\hat{\alpha}-s\alpha{\ast}}\hat{\xi}^{-15/4}\,\,\, \rho_{2} = e^{4s\hat{\alpha}-3s\alpha^{\ast}}\hat{\xi}^{-8},\,\,\,  

    \rho_{3}=e^{s\alpha^{\ast}}(\xi^{\ast})^{-1/2},\,\,\\
    \mu_{1}=e^{8s\hat{\alpha}-7s\alpha^{\ast}}\hat{\xi}^{-15},\,\,\,   
    \mu_{2}=e^{8s\hat{\alpha}-7s\alpha^{\ast}}\hat{\xi}^{-16},\,\,\,  
    \mu_{3}=e^{8s\hat{\alpha}-7s\alpha^{\ast}}\hat{\xi}^{-17}, \vspace{0.1cm}\\
    \kappa=e^{9s\hat{\alpha}-8s\alpha^{\ast}}\hat{\xi}^{-17},
   \end{array}\right.
\end{equation}
so that the values of $s$ and $\lambda$ satisfy the Lemma \ref{Lema 2 de Guerrero}. By inequality \eqref{des.16>15}, we can see that 
\begin{equation}\label{des pesos}
\left\{\begin{array}{l}
     \kappa\leq C\mu_{3} \leq C\mu_{2}\leq C\rho_{2}\leq C\rho_{3}\leq C\mu_{2}^{2},\vspace{0.1cm}\\
     \vert\mu_{2,t}\vert\leq C\rho_{1}, \vert \mu_{3}\mu_{3,t}\vert\leq C\mu_{2}^{2}\,\, \text{and}\,\, \kappa_{t}\leq C\mu_{3} \,\,\, \text{in}\,\,\, (0,T).
     \end{array}\right.
\end{equation}

With Lemma \ref{Lema 2 de Guerrero} we will be able to obtain a null controllability result for (\ref{lad.Bous.Linear}), in which the right-hand side $F_{1}$ and $F_{2}$ decay sufficiently fast to zero as $t\to T^-$. In other words, the following propositions are valid:

\begin{prop}\label{proposição 2.1}Let us assume that 
\begin{itemize}
 \item if $N=2$: $y^{0}\in H$, $\theta^{0}\in L^{2}(\Omega)$, $\rho_{3}F_{1}\in L^{2}(Q)^{2}$ and $\rho_{3}F_{2}\in L^{2}(0,T;L^{3/2}(\Omega))$.
    \item if $N=3$: $y^{0}\in H\cap L^{4}(\Omega)^{3}$, $\theta^{0}\in L^{2}(\Omega)$, $\rho_{3}F_{1}\in L^{2}(Q)^{3}$ and $\rho_{3}F_{2}\in L^{2}(0,T;L^{3/2}(\Omega))$.
\end{itemize}
Then, we can find state-controls $(y,P,\theta,v,v_{0})$ for (\ref{lad.Bous.Linear}) such that
\begin{equation}\label{2.8 de CaraLimacoNany}
\begin{array}{l}
\displaystyle\iint\limits_{Q}\rho_{1}^{2}(\vert y\vert^{2} + \vert \theta\vert^{2})dx\,dt + \displaystyle\iint\limits_{\omega\times (0,T)} \rho_{2}^{2}(\vert v\vert^{2} + \vert v_{0}\vert^{2})dx\,dt\vspace{0.1cm} \\
    \leq C \left(\Vert y^{0}\Vert^{2}_{H} + \Vert\theta^{0}\Vert^{2} + \Vert\rho_{3}F_{1}\Vert^{2}_{L^{2}(Q)^{N}} + \Vert\rho_{3}F_{2}\Vert^{2}_{L^{2}(0,T;L^{3/2}(\Omega))}\right).
    \end{array}
\end{equation}
In particular, one has $y(x,T)=0$ and $\theta(x,T)=0$. Moreover, if $(y^{0}, \theta^{0})\in V\times W_{0}^{1,3/2}(\Omega)$ then $y\in L^{2}(0,T;V)\cap C^{0}([0,T];H)$ and $\theta\in L^{2}(0,T;W^{2,3/2}(\Omega))\cap C^{0}([0,T];L^{3/2}(\Omega))$.
\end{prop}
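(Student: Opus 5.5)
The proof follows the Fursikov--Imanuvilov duality scheme used in \cite{Guerrero} and \cite{CaraLimacoMenezes}, with the Carleman estimate of Lemma \ref{Lema 2 de Guerrero} as the key input.

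\textbf{Step 1: the minimization problem.} Let $P_0$ be the space of pairs $(\varphi,\pi,\psi)$ smooth enough, with $\nabla\cdot\varphi=0$ and vanishing on $\Sigma$. On it we introduce the bilinear form
\begin{equation*}
a((\varphi,\psi),(\tilde\varphi,\tilde\psi))=\iint_Q \rho^{-2}\big(L_1\cdot\tilde L_1+L_2\tilde L_2\big)\,dx\,dt+\iint_{\omega\times(0,T)}\rho_2^{-2}\tilde{1}_\omega\big(\varphi\cdot\tilde\varphi+\psi\tilde\psi\big)\,dx\,dt,
\end{equation*}
where $L_1=\mathcal{L}_1^{\ast}\varphi+\nabla\pi$ and $L_2=\mathcal{L}_2^{\ast}\psi-\varphi\cdot e_N$. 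The weights are arranged so that $\rho^{-2}$ dominates $e^{-4s\hat\alpha+2s\alpha^\ast}\hat\xi^{15/2}$, and $\rho_2^{-2}$ dominates the observation weight $e^{-8s\hat\alpha+6s\alpha^\ast}\hat\xi^{16}$ appearing in \eqref{Carleman estimate}; adjusting exponents if needed, this is what \eqref{des.16>15} buys. Under these comparisons Lemma \ref{Lema 2 de Guerrero} shows that $a$ is a scalar product, and we let $P$ be the completion of $P_0$ for it.

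\textbf{Step 2: continuity of the right-hand side and Lax--Milgram.} Define
\begin{equation*}
\ell(\varphi,\psi)=\iint_Q\big(F_1\cdot\varphi+F_2\psi\big)\,dx\,dt+(y^0,\varphi(0))+(\theta^0,\psi(0)).
\end{equation*}
The initial-data terms are bounded by $\|\varphi(0)\|+\|\psi(0)\|\le C a^{1/2}$. For the source terms, the $F_1$ part is bounded by $\|\rho_3F_1\|\,\|\rho_3^{-1}\varphi\|$.

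The main obstacle is the $F_2$ term, because $F_2$ is only controlled in $L^2(0,T;L^{3/2})$. We must therefore bound $\|\rho_3^{-1}\psi\|_{L^2(0,T;L^3)}$. Since $\rho_3^{-1}=e^{-s\alpha^\ast}(\xi^\ast)^{1/2}$, it is dominated by $e^{-s\alpha}\xi^{1/2}$, so
\begin{equation*}
\|\rho_3^{-1}\psi\|_{L^2(0,T;L^3)}\le C\|\rho_3^{-1}\nabla\psi\|_{L^2(Q)}\le C\Big(\iint_Q e^{-2s\alpha}\xi|\nabla\psi|^2\Big)^{1/2}\le Ca^{1/2},
\end{equation*}
using $H^1_0\hookrightarrow L^3$ for $N\le3$ and then the gradient term in \eqref{Carleman estimate}. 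The same gradient bound handles $\varphi$. Hence $\ell$ is continuous on $P$, and Lax--Milgram gives a unique $(\hat\varphi,\hat\psi)\in P$ with $a(\hat\cdot,\cdot)=\ell(\cdot)$ and $a(\hat\cdot,\hat\cdot)\le C\times$(right-hand side of \eqref{2.8 de CaraLimacoNany}).

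\textbf{Step 3: the state--control pair.} Set
\begin{equation*}
y=\rho^{-2}\hat L_1,\qquad \theta=\rho^{-2}\hat L_2,\qquad v=-\rho_2^{-2}\hat\varphi,\qquad v_0=-\rho_2^{-2}\hat\psi,
\end{equation*}
with signs fixed so that the duality identity holds. Testing the variational equation against arbitrary adjoint solutions shows that $(y,\theta)$ is the weak (transposition) solution of \eqref{lad.Bous.Linear} with these controls; the adjoint coupling $\varphi\cdot e_N$ corresponds to the term $\nu_0\theta e_N$ in the primal system, with constants adjusted. Moreover $\iint\rho^2(|y|^2+|\theta|^2)+\iint\rho_2^2(|v|^2+|v_0|^2)=a(\hat\cdot,\hat\cdot)$. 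Since $\rho_1\le C\rho$, estimate \eqref{2.8 de CaraLimacoNany} follows.

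\textbf{Step 4: vanishing at $T$ and regularity.} Because $\rho_1$ blows up as $t\to T^-$ and $y,\theta$ are continuous in time with values in a weak space, finiteness of the weighted integral forces $y(T)=0$ and $\theta(T)=0$.

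For regularity, first view the equation for $y$ as a Stokes system with right-hand side $v\tilde 1_\omega+\nu_0\theta e_N+F_1\in L^2(Q)$. This needs $\theta\in L^2(Q)$, which is already given by the weighted bound. Lemma \ref{Stokes em V} (or the energy estimate when $y^0\in H$) then gives $y\in L^2(0,T;V)\cap C^0([0,T];H)$. When $N=3$ the hypothesis $y^0\in L^4$ is used only to close the weak-solution framework.

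Next, view the $\theta$ equation as a heat equation with source $v_0\tilde1_\omega+F_2\in L^2(0,T;L^{3/2})$. Note that $\rho_3$ is bounded below, so $F_2$ itself lies in this space. Lemma \ref{parabolic in LpLq} with $r=2$, $s=3/2$ and $\theta^0\in W^{1,3/2}_0$ gives $\theta\in L^2(0,T;W^{2,3/2})\cap H^1(0,T;L^{3/2})\subset C^0([0,T];L^{3/2})$.
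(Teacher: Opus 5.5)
Your scheme matches the paper's: Lax--Milgram on the completion of smooth adjoint triples, controls $-\rho_2^{-2}\hat\varphi$ and $-\rho_2^{-2}\hat\psi$, the $F_2$ term handled through the gradient term of \eqref{Carleman estimate} and $H^1_0(\Omega)\hookrightarrow L^3(\Omega)$, then Stokes and $L^2$--$L^{3/2}$ maximal regularity. \textbf{However, the weight you put on the adjoint residuals breaks Step 1.}

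The source weight in \eqref{Carleman estimate} is exactly $e^{-4s\hat\alpha+2s\alpha^\ast}\hat\xi^{15/2}=\rho_1^{-2}$, and $\rho^{-2}=e^{-2s\alpha}\xi^{3}$ does not dominate it. Using $\alpha\ge\hat\alpha$ and $\xi\le\hat\xi$,
\[
\rho^{-2}=e^{-2s\alpha+4s\hat\alpha-2s\alpha^\ast}\,\xi^{3}\hat\xi^{-15/2}\,\rho_1^{-2}\le C\,e^{-2s(\alpha^\ast-\hat\alpha)}\,\rho_1^{-2},
\qquad
\alpha^\ast-\hat\alpha=\frac{e^{\lambda(m+1)\Vert\eta^0\Vert_\infty}-e^{\lambda m\Vert\eta^0\Vert_\infty}}{\ell(t)^4}\to+\infty\ \ (t\to T^-).
\]
So $\rho^{-2}$ is much smaller than $\rho_1^{-2}$ near $t=T$, uniformly in $x$. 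The defect lies in the exponential factor, so neither adjusting powers of $\xi$ nor invoking \eqref{des.16>15} can repair it. Your own Step 3 uses $\rho_1\le C\rho$, i.e.\ $\rho^{-2}\le C\rho_1^{-2}$, which contradicts the domination claimed in Step 1.

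Consequently, with your form $a$, Lemma \ref{Lema 2 de Guerrero} does not give
\[
\Vert\varphi(0)\Vert^2+\Vert\psi(0)\Vert^2+\iint_Q e^{-2s\alpha}\xi\,(|\nabla\varphi|^2+|\nabla\psi|^2)\,dx\,dt\le C\,a .
\]
So neither the claim that $a$ is a scalar product nor the continuity of $\ell$ in Step 2 is justified. There is also a warning sign: your conclusion $\iint_Q\rho^2(|y|^2+|\theta|^2)<\infty$ is strictly stronger than \eqref{2.8 de CaraLimacoNany}. Lemma \ref{Lema 2 de Guerrero}, whose right-hand side weights depend only on $t$ (the price of handling the pressure), only delivers the $\rho_1$-weighted bound.

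\textbf{The repair} is to use $\rho_1^{-2}$ instead of $\rho^{-2}$ in $a$, and to set $y=\rho_1^{-2}(\mathcal{L}_1^{\ast}\hat\varphi+\nabla\hat\pi)$ and $\theta=\rho_1^{-2}(\mathcal{L}_2^{\ast}\hat\psi-\hat\varphi\cdot e_N)$, as the paper does. With this change:
\begin{itemize}
\item $a$ reproduces the right-hand side of \eqref{Carleman estimate} term by term; $\rho_2^{-2}$ already equals the observation weight $e^{-8s\hat\alpha+6s\alpha^\ast}\hat\xi^{16}$.
\item The weighted part of $a$, evaluated at the Lax--Milgram solution, is directly the left-hand side of \eqref{2.8 de CaraLimacoNany}, so the comparison $\rho_1\le C\rho$ is no longer needed.
\item Your Step 2 survives unchanged, since it only uses the left-hand side of the Carleman estimate together with $\rho_3^{-2}\le e^{-2s\alpha}\xi$.
\end{itemize}
Steps 3--4 then coincide with the paper's argument.

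A minor bookkeeping point remains. Since $a$ contains only $\tilde 1_\omega\rho_2^{-2}|\hat\varphi|^2$ and $\tilde 1_\omega$ vanishes near $\partial\omega$, the cost $\iint_{\omega\times(0,T)}\rho_2^2|v|^2$ of $v=-\rho_2^{-2}\hat\varphi$ is not controlled by $a$. To fix this, put $\tilde 1_\omega^2$ in $a$, take $v=-\rho_2^{-2}\tilde 1_\omega\hat\varphi$, and apply the Carleman estimate on a subset of $\omega$ where $\tilde 1_\omega$ is bounded below.
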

\begin{proof}
    It is enough to observe that if $N=2$, by Sobolev embedding, we have $\rho_{3}F_{1}\in L^{2}(0,T;H^{-1}(\Omega)^{2})$, $\rho_{3}F_{2}\in L^{2}(0,T;H^{-1}(\Omega)^{2})$ and if $N=3$, we have  $\rho_{3}F_{1}\in L^{2}(0,T;W^{-1,6}(\Omega)^{3})$, $\rho_{3}F_{2}\in L^{2}(0,T;H^{-1}(\Omega)^{3})$. Hence, we can obtain the proof by following the ideas of Proposition 2 in \cite{Guerrero}.

    Indeed, let us introduce some notation:
\begin{itemize}
    \item [(i)] $P_{0}=\lbrace (\hat{\varphi},\hat{\pi},\hat{\psi})\in C^{\infty}(\overline{Q})^{N+2}; \nabla\cdot\hat{\varphi} = 0\, \text{in}\, Q, \hat{\varphi}\left|_{\Sigma}\right.=\hat{\psi}\left|_{\Sigma}\right.=0, \displaystyle\int_{\omega}\hat{\pi}(x,t)dx=0\rbrace;$
    
    \item[(ii)]$a((\varphi,{\pi},{\psi}),(\hat{\varphi},\hat{\pi},\hat{\psi}))=\displaystyle\iint\limits_{Q}\rho_{1}^{-2}(\mathcal{L}_{1}^{\ast}{\varphi}+\nabla{\pi})(\mathcal{L}_{1}^{\ast}\hat{\varphi}+\nabla\hat{\pi})dxdt$\\
    $+\displaystyle\iint\limits_{Q}\rho_{1}^{-2}(\mathcal{L}_{2}^{\ast}{\psi}-{\varphi}_{N})(\mathcal{L}_{2}^{\ast}{\hat{\psi}}-\hat{\varphi}_{N})dxdt + \displaystyle\iint\limits_{Q}\tilde{1}_{\omega}\,\rho_{2}^{-2}({\varphi}\hat{\varphi} + {\psi}\hat{\psi})dxdt, \, \forall\,(\hat{\varphi},\hat{\pi},\hat{\psi})\in P_{0};$
    \item[(iii)]$P$ is the completion of $P_{0}$ for the scalar product $a(.,.)$;
    \item[(iv)] $\langle l, (\hat{\varphi},\hat{\pi},\hat{\psi}) \rangle = \displaystyle\int_{0}^{T}\langle F_{1}(t),\hat{\varphi}(t)\rangle dt + \displaystyle\int_{0}^{T}\langle F_{2}(t),\hat{\psi}(t)\rangle dt + \displaystyle\int_{\Omega}y^{0}\hat{\varphi}(0)dx + \displaystyle\int_{\Omega}\theta^{0}\tilde{\psi}(0)dxdt$.
\end{itemize}
Thus, due to Carleman inequality \eqref{Carleman estimate}, we have
\begin{equation}\label{consequencia de Carleman}
     \begin{array}{c}
         \Vert\hat{\varphi}(.,0)\Vert^{2} + \Vert\hat{\psi}(.,0)\Vert^{2} + \displaystyle\iint\limits_{Q}\rho_{3}^{-2}(\vert\hat{\varphi}\vert^{2}+\vert\hat{\psi}\vert^{2})+\rho_{3}^{-2}\xi^{-2}(\vert\nabla\hat{\varphi}\vert^{2}+\vert\nabla\hat{\psi}\vert^{2})dx\,dt\\
          \leq C\,a((\hat{\varphi},\hat{\pi},\hat{\psi}),(\hat{\varphi},\hat{\pi},\hat{\psi})), \,\forall\, (\hat{\varphi},\hat{\pi},\hat{\psi})\in P_{0}
        \end{array}
\end{equation}
from which it is possible to conclude, using the density of $P_{0}$ in $P$, that
$l$ is a bounded linear form on $P$. Therefore, applying Lax-Milgram's lemma, there exists one and only one $(\varphi,\pi,\psi)\in P$ satisfying 
\begin{equation}\label{Solução por Lax Milgram}
a((\varphi,\pi,\psi),(\hat{\varphi},\hat{\pi},\hat{\psi}))=\langle l, (\hat{\varphi},\hat{\pi},\hat{\psi}) \rangle, \, \forall\, (\hat{\varphi},\hat{\pi},\hat{\psi})\in P.
\end{equation}
According, we can write 
\begin{equation}\label{definição de y, theta, v, v0}
    \left\{\begin{array}{lll}
    y  = \rho^{-2}_{1}(\mathcal{L}_{1}^{\ast}\varphi+\nabla\pi), \,\,\, \nabla\cdot \varphi=0 & in & Q, \vspace{0.1cm}\\
    \theta = \rho^{-2}_{1}(\mathcal{L}_{2}^{\ast}\psi-\varphi e_{N}) & in & Q,\vspace{0.1cm}\\
    v=-\rho^{-2}_{2}\varphi\tilde{1}_{\omega},\,\,\, v_{0}=-\rho^{-2}_{2}\psi\tilde{1}_{\omega} & in & Q,
    \end{array}\right.
\end{equation}
where $(\varphi,\pi,\psi)$ is the unique solution of \eqref{Solução por Lax Milgram}.

Next, just use the Sobolev embedding mentioned above and apply the arguments of \cite{Guerrero} to obtain the existence of controls $(v,v_{0})\in L^{2}(\omega\times (0,T))^{N+1}$ and associated solutions to \eqref{lad.Bous.Linear}
satisfying \eqref{2.8 de CaraLimacoNany} and consequently \eqref{nulo}. The regularity of the $\theta$ solution is justified by the maximum regularity for parabolic systems in spaces $L^{p}-L^{q}$, see Lemma \ref{parabolic in LpLq}, and consequently by the standard results for Stokes systems we obtain the regularity of $y$, see \cite{Temam}.
\hfill
\end{proof}

\begin{prop}\label{proposição controle nulo do sistema linear para W{1,p}} Consider $ 3<p\leq 6$ and $p<q<\infty$. Let us assume that the functions  $F_{1}, F_{2}$ in \eqref{lad.Bous.Linear} satisfy $\rho_{3}F_{1}\in L^{q}(0,T;L^{p}(\Omega)^{N})$, $\rho_{3}F_{2}\in L^{q}(0,T;L^{p}(\Omega))$ and $(y^{0},\theta^{0})\in V^{p}\times W^{1,p}_{0}(\Omega)$. Then \eqref{lad.Bous.Linear} is null-controllable, and its control-state satisfy $(v,v_{0})\in L^{2}(\omega\times(0,T))^{N+1}$, $y\in L^{q}(0,T;W^{2,p}(\Omega)^{N})\cap C^{0}([0,T];L^{p}(\Omega)^{N})$ and $\theta\in L^{q}(0,T;W^{2,p}(\Omega))\cap C^{0}([0,T];L^{p}(\Omega))$.
\end{prop}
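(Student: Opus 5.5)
The plan is to reduce the statement to Proposition \ref{proposição 2.1} and then recover the higher $L^{q}$--$L^{p}$ regularity from the maximal regularity results, Lemmas \ref{parabolic in LpLq} and \ref{Stokes para Lp}.

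\textbf{Step 1: check the hypotheses of Proposition \ref{proposição 2.1}.}
\begin{itemize}
\item Since $p>3$, $q>p>2$ and $\Omega$ is bounded, we have $\rho_{3}F_{1}\in L^{q}(0,T;L^{p}(\Omega)^{N})\subset L^{2}(Q)^{N}$.
\item Likewise $\rho_{3}F_{2}\in L^{2}(0,T;L^{3/2}(\Omega))$.
\item Also $y^{0}\in V^{p}\subset H\cap L^{4}(\Omega)^{N}$, because $W^{1,p}_{0}\subset L^{\infty}$ for $p>3$, and $\theta^{0}\in L^{2}(\Omega)$.
\end{itemize}
Proposition \ref{proposição 2.1} then gives the Lax--Milgram state-control $(y,P,\theta,v,v_{0})$, with $(v,v_{0})\in L^{2}(\omega\times(0,T))^{N+1}$, the weighted bound \eqref{2.8 de CaraLimacoNany}, and $y(T)=0$, $\theta(T)=0$. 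This settles null controllability and the $L^{2}$ property of the controls.

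\textbf{Step 2: temperature regularity.} The remaining task is to show $\theta\in L^{q}(0,T;W^{2,p})\cap C^{0}([0,T];L^{p})$, and similarly for $y$. Since $\theta$ solves $\mathcal{L}_{2}\theta=v_{0}\tilde 1_{\omega}+F_{2}$ with $\theta^{0}\in W^{1,p}_{0}$, Lemma \ref{parabolic in LpLq} (after scaling time by $\nu_{0}$) applies once the right-hand side lies in $L^{q}(0,T;L^{p})$.
\begin{itemize}
\item $F_{2}$ is fine, because $\rho_{3}^{-1}$ is bounded.
\item The obstacle is $v_{0}$, which is a priori only weighted-$L^{2}$. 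Recall $v_{0}=-\rho_{2}^{-2}\psi\tilde1_{\omega}$, where $\psi$ solves the adjoint system \eqref{adjunto de lad. Bous.} with right-hand sides given by $\rho_{1}^{-2}$ times $(y,\theta)$.
\end{itemize}

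\textbf{Step 3: bootstrap for the control, the main obstacle.} I would run a bootstrap on weighted adjoint variables, as in \cite{Guerrero}:
\begin{itemize}
\item Set $\varphi^{\ast}=\mu\varphi$ and $\psi^{\ast}=\mu\psi$ with weights like $\mu_{1},\mu_{2},\mu_{3}$ or $\kappa$. These satisfy Stokes and heat equations whose sources are $\mu\rho_{1}^{-2}\rho_{1}\cdot(\rho_{1}y,\rho_{1}\theta)$ plus $\mu_{t}$-terms. These sources are controlled by \eqref{des pesos} and \eqref{2.8 de CaraLimacoNany}, and they vanish at $t=T$.
\item Apply Lemma \ref{Stokes para Lp} and Lemma \ref{parabolic in LpLq} successively. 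Each pass moves from $L^{2}(0,T;L^{2})$ to $L^{2}(0,T;H^{2})\cap H^{1}(0,T;L^{2})$, then, via Sobolev embeddings in $N\le 3$, to $L^{r}(0,T;L^{s})$ with larger exponents.
\item Finitely many steps reach $L^{q}(0,T;W^{2,p})$ for $\mu\varphi$ and $\mu\psi$. The key quantitative point is that the chain of weights $\kappa\le C\mu_{3}\le C\mu_{2}\le C\rho_{2}$ leaves enough room for the factor $\rho_{2}^{-2}$ to be absorbed, giving $v,v_{0}\in L^{q}(0,T;L^{p})$. The exponent bookkeeping here is the delicate part: one may need one extra weight level, or the improved space--time exponents from parabolic smoothing, to pass from $L^{2}$ in time to $L^{q}$.
\end{itemize}

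\textbf{Step 4: conclusion.} With $v_{0}\in L^{q}(0,T;L^{p})$, Lemma \ref{parabolic in LpLq} gives
\[
\theta\in L^{q}(0,T;W^{2,p})\cap W^{1,q}(0,T;L^{p})\subset C^{0}([0,T];L^{p}).
\]
Then $v\tilde1_{\omega}+\nu_{0}\theta e_{N}+F_{1}\in L^{q}(0,T;L^{p}(\Omega)^{N})$, and Lemma \ref{Stokes para Lp} with $y^{0}\in W^{1,p}$ yields
\[
y\in L^{q}(0,T;W^{2,p}(\Omega)^{N})\cap W^{1,q}(0,T;L^{p}(\Omega)^{N})\subset C^{0}([0,T];L^{p}(\Omega)^{N}).
\]
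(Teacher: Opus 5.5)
Your Steps 1, 2 and 4 coincide with the paper's proof. That proof only verifies the hypotheses of Proposition \ref{proposição 2.1} via the embeddings $V^{p}\times W^{1,p}_{0}(\Omega)\hookrightarrow (H\cap L^{4}(\Omega)^{N})\times L^{2}(\Omega)$ and $L^{q}(0,T;L^{p}(\Omega))\hookrightarrow L^{2}(Q)\hookrightarrow L^{2}(0,T;L^{3/2}(\Omega))$, and then cites Lemmas \ref{parabolic in LpLq} and \ref{Stokes para Lp}.

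You correctly notice that this citation silently needs $v\tilde{1}_{\omega},v_{0}\tilde{1}_{\omega}\in L^{q}(0,T;L^{p}(\Omega))$, but your Step 3 does not establish it. Let $m=m(t)$ be the weight you multiply by. The weighted adjoint equations then have sources $m\rho_{1}^{2}y-m_{t}\varphi$ and $m\rho_{1}^{2}\theta+m\varphi_{N}-m_{t}\psi$. The estimates \eqref{2.8 de CaraLimacoNany} and \eqref{Carleman estimate} only put $\rho_{1}y$, $\rho_{1}\theta$, $\rho^{-1}\varphi$, $\rho^{-1}\psi$ in $L^{2}(Q)$. So the iteration stalls after the first pass. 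To feed a better-integrable source into a second pass, you would first have to improve the weighted integrability of $(y,\theta)$ through the primal system, whose right-hand side contains the very controls you are trying to estimate. The claim that ``finitely many steps reach $L^{q}(0,T;W^{2,p})$'' for the adjoint is therefore unsupported, since no coupled primal--dual scheme with its weights is set up. You also never use the hypothesis $p\le 6$, which signals that the key point is missing. Finally, the natural weight is $m=\kappa\rho_{2}^{-2}$, since $v=-\rho_{2}^{-2}\varphi\tilde{1}_{\omega}$ and $v_{0}=-\rho_{2}^{-2}\psi\tilde{1}_{\omega}$ by \eqref{definição de y, theta, v, v0}.

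The missing observation is that one pass already suffices, and this is exactly where $p\le 6$ enters. As in the first part of Proposition \ref{proposição extra}, $(\kappa\rho_{2}^{-2}\varphi,\kappa\rho_{2}^{-2}\pi)$ solves a backward Stokes system and $\kappa\rho_{2}^{-2}\psi$ solves a backward heat equation, both with zero final data. Their right-hand sides are
\[
(\kappa\rho_{2}^{-2}\rho_{1})(\rho_{1}y)-\bigl[(\kappa\rho_{2}^{-2})_{t}\rho\bigr](\rho^{-1}\varphi)
\]
and
\[
(\kappa\rho_{2}^{-2}\rho_{1})(\rho_{1}\theta)+\bigl[\kappa\rho_{2}^{-2}\rho\bigr](\rho^{-1}\varphi_{N})-\bigl[(\kappa\rho_{2}^{-2})_{t}\rho\bigr](\rho^{-1}\psi),
\]
which belong to $L^{2}(Q)$ because all the weight factors in parentheses and brackets are bounded. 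Hence these functions lie in $L^{2}(0,T;H^{2})\cap H^{1}(0,T;L^{2})\subset C^{0}([0,T];H^{1})$. Since $\kappa^{-1}$ is bounded on $[0,T]$, this gives
\[
v,\,v_{0}\in L^{\infty}(0,T;H^{1}(\Omega))\hookrightarrow L^{\infty}(0,T;L^{6}(\Omega))\hookrightarrow L^{q}(0,T;L^{p}(\Omega))\qquad (N\le 3,\ p\le 6).
\]
This is the embedding $C^{0}([0,T];H^{1}(\Omega))\hookrightarrow L^{q}(0,T;L^{p}(\Omega))$ that the paper invokes later in Proposition \ref{Proposicao com rhobarratheta}. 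No $W^{2,p}$-regularity of the adjoint is needed, and with this your Step 4 goes through.

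One caveat that you inherit from the paper rather than introduce: Lemmas \ref{parabolic in LpLq} and \ref{Stokes para Lp} are applied with time exponent $q>2$ and initial data only in $W^{1,p}$. Maximal $L^{q}(0,T;L^{p})$-regularity actually requires the data to lie in the trace space $B^{2-2/q}_{p,q}$, which is strictly smaller than $W^{1,p}$ when $q>2$.
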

\begin{proof}
 Indeed, since $V^{p}\times W^{1,p}_{0}(\Omega)\hookrightarrow H\cap L^{4}(\Omega)^{N}\times L^{2}(\Omega)$ and $L^{q}(0,T; L^{p}(\Omega))\hookrightarrow L^{2}(Q)\hookrightarrow L^{2}(0,T;L^{3/2}(\Omega))$ then the Proposition \ref{proposição 2.1} is verified and consequently \eqref{lad.Bous.Linear} is null-controllable. The regularities of $\theta$ and $y$ follow respectively from the Lemma \ref{parabolic in LpLq} and the Lemma \ref{Stokes para Lp}.
 \hfill
\end{proof}

\subsection{Estimates for the states solutions}\label{estimates for the states solutions}
In this subsection we will show estimates for the solutions associated with (\ref{lad.Bous.Linear}), that is, for both the velocity variable and the temperature variable. We will obtain estimates not only for $y$ and $\theta$, but also for $ \nabla y, \Delta y, \nabla\theta, \Delta\theta$ and the controls $v$ and $v_{0}$. The results obtained in this subsection will be fundamental to obtain the null controllability of the nonlinear systems \eqref{lad.boussinesq} and \eqref{lad.boussinesq caso extra}.

\begin{prop}\label{proposição 2.2}
   Let the assumptions in Proposition \ref{proposição 2.1} be satisfied. Let the state-control $(y,P,\theta,v,v_{0})$ satisfy (\ref{lad.Bous.Linear}) and (\ref{2.8 de CaraLimacoNany}). Then, the following estimate holds:
   \begin{equation}\label{2.9 CaraLimacoNayny}
       \begin{array}{l}
        \displaystyle\sup_{t\in [0,T]}\displaystyle\int_{\Omega}\mu_{1}^{2}\vert y\vert^{2}\,dx + \displaystyle\iint\limits_{Q}\mu_{1}^{2}\vert\nabla y\vert^{2}\,dx\,dt \vspace{0.1cm} \\
        \leq\, C\,\left( \Vert y^{0}\Vert^{2}_{H} + \displaystyle\iint\limits_{Q}[\rho_{3}^{2}\vert F_{1}\vert^{2}+\rho_{1}^{2}(\vert y\vert^{2}+\vert\theta\vert^{2})]\,dx\,dt + \displaystyle\iint\limits_{\omega\times (0,T)}\rho_{2}^{2}\vert v\vert^{2}\,dx\,dt \right).
       \end{array}
   \end{equation}
Furthermore, if $(y^{0},\theta^{0})\in V\times W_{0}^{1,3/2}(\Omega)$, one also has
 \begin{equation}
       \begin{array}{l}\label{2.10 CaraLimacoNany}
        \displaystyle\sup_{t\in [0,T]}\displaystyle\int_{\Omega}\mu_{2}^{2}\vert \nabla y\vert^{2}\,dx + \displaystyle\iint\limits_{Q}\mu_{2}^{2}(\vert y_{t}\vert^{2}+\vert\Delta y\vert^{2})\,dx\,dt \vspace{0.1cm}\\
     \leq\, C\,\left( \Vert y^{0}\Vert^{2}_{V} + \displaystyle\iint\limits_{Q}[\rho_{3}^{2}\vert F_{1}\vert^{2}+\rho_{1}^{2}(\vert y\vert^{2}+\vert\theta\vert^{2})]\,dx\,dt + \displaystyle\iint\limits_{\omega\times (0,T)}\rho_{2}^{2}\vert v\vert^{2}\,dx\,dt \right)    
       \end{array}  
   \end{equation}
and
\begin{equation}\label{2.11 CaraLimacoNany}
    \begin{array}{l}
      \displaystyle\int_{0}^{T}\mu_{2}^{2}\Vert \theta_{t}\Vert^{2}_{L^{3/2}(\Omega)}\,dt + \displaystyle\int_{0}^{T}\mu_{2}^{2}\Vert\Delta\theta\Vert^{2}_{L^{3/2}(\Omega)}\,dt\vspace{0.1cm}\\
         \leq \,C\, \left( \Vert\theta^{0}\Vert^{2}_{W_{0}^{1,3/2}(\Omega)}+\displaystyle\iint\limits_{Q}\rho_{1}^{2}\vert\theta\vert^{2}\,dx\,dt
         +\displaystyle\iint\limits_{\omega\times (0,T)}\rho_{2}^{2}\vert v_{0}\vert^{2}\,dx\,dt + \displaystyle\int_{0}^{T}\Vert\rho_{3}F_{2}\Vert^{2}_{L^{3/2}(\Omega)}\,dt\right).
    \end{array}
\end{equation}
\end{prop}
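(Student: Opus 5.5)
The plan is the standard weighted energy argument of \cite{CaraJuanDanytrue,Guerrero}. We multiply the state equations by time-dependent weights that vanish as $t\to T^{-}$, and absorb the commutator terms produced by the weight derivatives using \eqref{2.8 de CaraLimacoNany} and the weight comparisons \eqref{des pesos}. Since all weights $\mu_i$ depend only on $t$, no spatial derivatives of the weights appear, and the pressure term disappears after projection on divergence-free fields.

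\textbf{Estimate \eqref{2.9 CaraLimacoNayny}.} We test $\eqref{lad.Bous.Linear}_{1}$ with $\mu_{1}^{2}y$ and integrate in $\Omega$. Since $\nabla\cdot y=0$ and $y|_{\partial\Omega}=0$, the pressure drops out, and we obtain
\[
\frac12\frac{d}{dt}\int_\Omega\mu_1^2|y|^2+\nu_0\int_\Omega\mu_1^2|\nabla y|^2=\int_\Omega\mu_1\mu_{1,t}|y|^2+\int_\Omega\mu_1^2\bigl(v\tilde{1}_\omega+\nu_0\theta e_N+F_1\bigr)\cdot y .
\]
We need $|\mu_1\mu_{1,t}|\le C\rho_1^2$ and $\mu_1\le C\rho_1,\rho_2,\rho_3$. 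These follow exactly as in \eqref{des pesos}, since $\mu_1$ has the same exponential factor as $\mu_2$ with one less power of $\hat\xi$. The right-hand side is then bounded by Cauchy--Schwarz as
\[
C\Bigl(\int_\Omega\rho_1^2(|y|^2+|\theta|^2)+\rho_2^2|v|^2\tilde 1_\omega+\rho_3^2|F_1|^2\Bigr).
\]
Integrating in time, with $\mu_1(0)$ bounded, gives \eqref{2.9 CaraLimacoNayny}.

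\textbf{Estimate \eqref{2.10 CaraLimacoNany}.} We test with $\mu_2^2 y_t$ and with $-\mu_2^2 P_H\Delta y$, where $P_H$ is the Leray projector, or equivalently apply the Stokes regularity of Lemma \ref{Stokes em V} to $\mu_2 y$. The function $\tilde y=\mu_2 y$ solves a Stokes system with initial datum $\mu_2(0)y^0\in V$ and right-hand side
\[
\mu_2\bigl(v\tilde 1_\omega+\nu_0\theta e_N+F_1\bigr)+\mu_{2,t}\,y .
\]
By $\mu_2\le C\rho_2\le C\rho_3$ and $\mu_2\le C\rho_1$, the first part is controlled in $L^2(Q)$ by the right-hand side of \eqref{2.10 CaraLimacoNany}. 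By $|\mu_{2,t}|\le C\rho_1$ from \eqref{des pesos}, the second part satisfies $\|\mu_{2,t}y\|_{L^2(Q)}\le C\|\rho_1 y\|_{L^2(Q)}$. The Stokes estimate in $L^\infty(0,T;V)\cap L^2(0,T;H^2)$, with $\tilde y_t=\mu_2 y_t+\mu_{2,t}y$, then gives the bound.

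\textbf{Estimate \eqref{2.11 CaraLimacoNany}.} We use Lemma \ref{parabolic in LpLq} with $r=2$, $s=3/2$ applied to $\tilde\theta=\mu_2\theta$. This function solves a heat equation with datum $\mu_2(0)\theta^0\in W^{1,3/2}_0$ and source
\[
\mu_2 v_0\tilde 1_\omega+\mu_2F_2+\mu_{2,t}\theta .
\]
Each term is estimated in $L^2(0,T;L^{3/2})\hookleftarrow L^2(Q)$, since $\Omega$ is bounded. We use $\mu_2\le C\rho_2$ for the control term, $\mu_2\le C\rho_3$ for $F_2$, and $|\mu_{2,t}|\le C\rho_1$ for the commutator term. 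Converting back from $\tilde\theta_t$ to $\mu_2\theta_t$ costs one more term $\mu_{2,t}\theta$, which is bounded the same way.

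The only delicate point, and the main obstacle, is verifying the weight inequalities, especially $|\mu_{i,t}|\le C\rho_1$ and $\mu_1\le C\rho_1$. These reduce to comparing the exponents $8\hat\alpha-7\alpha^\ast$ and $2\hat\alpha-\alpha^\ast$, that is, to $6\hat\alpha\ge6\alpha^\ast-\text{const}$. This is not literally true, so we must use $\hat\alpha\le\alpha^\ast$ in the right direction, together with the fact that $|\alpha_t|\le C\hat\xi^{5/4}$ is absorbed by the extra negative powers of $\hat\xi$. This is precisely where condition \eqref{des.16>15} and the choice of powers $\hat\xi^{-15},\hat\xi^{-16}$ enter, as already encoded in \eqref{des pesos}.
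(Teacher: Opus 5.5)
Your proposal is correct and follows the paper's route. For \eqref{2.9 CaraLimacoNayny}--\eqref{2.10 CaraLimacoNany} you use the weighted energy/Stokes estimates for $\mu_1 y$ and $\mu_2 y$, which the paper delegates to \cite{CaraLimacoMenezes}; for \eqref{2.11 CaraLimacoNany} you use exactly the paper's argument, Lemma \ref{parabolic in LpLq} with $r=2$, $s=3/2$ applied to $\tilde\theta=\mu_2\theta$, and you also make explicit the conversion $\mu_2\theta_t=\tilde\theta_t-\mu_{2,t}\theta$ that the paper leaves implicit. One small correction to your last paragraph: the needed comparison is $8\hat\alpha-7\alpha^\ast\le 2\hat\alpha-\alpha^\ast$, i.e.\ $6\hat\alpha\le 6\alpha^\ast$, which holds trivially from $\hat\alpha\le\alpha^\ast$, so \eqref{des.16>15} plays no role in this proposition (it is only needed for $\rho_3\le C\mu_2^2$).
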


\begin{proof}
The proofs of (\ref{2.9 CaraLimacoNayny}) and (\ref{2.10 CaraLimacoNany}) can be obtained as in \cite{CaraLimacoMenezes} (just use the same arguments with the weights defined in (\ref{pesos control nulo})).

Let us prove (\ref{2.11 CaraLimacoNany}). Denote by $\tilde{\theta}=\mu_{2}\theta$. Then, by (\ref{lad.Bous.Linear}), we have 
\begin{equation}\label{mudança}
\left\{
    \begin{array}[l]{lll}
     \tilde{\theta}_{t}-\nu_{0}\Delta\tilde{\theta} = \tilde{h} &\text{in} & Q,  \\
      \tilde{\theta}(x,t)=0 & \text{on} & \Sigma  \\
      \tilde{\theta}(x,0)=\mu_{2}(0)\theta^{0}(x) & \text{in} & \Omega,
    \end{array}\right.
\end{equation}
where $\tilde{h} = \mu_{2}v_{0}\tilde{1}_{\omega}+\mu_{2}F_{2}+\mu_{2,t}\theta$.

Note that, by (\ref{des pesos}) and (\ref{2.8 de CaraLimacoNany}) we have $\mu_{2}v_{0}\tilde{1}_{\omega}, \mu_{2}F_{2}\in L^{2}(0,T;L^{3/2}(\Omega))$ and also
\begin{equation}\label{mu2t}
    \begin{array}{ccc}     \Vert\mu_{2,t}\theta\Vert^{2}_{L^{2}(0,T;L^{3/2}(\Omega))} &\leq  & C\displaystyle\int_{0}^{T}\left(\displaystyle\int_{\Omega}\vert\rho_{1}\theta\vert^{3/2}\,dx\right)^{4/3}dt  \vspace{0.2cm}\\
     &=& C\,\Vert\rho_{1}\theta\Vert^{2}_{L^{2}(0,T;L^{3/2}(\Omega))} < + \infty.
    \end{array}
\end{equation}
Then, $\tilde{h}\in L^{2}(0,T;L^{3/2}(\Omega))$. Therefore, from \eqref{estimate de Robert} we have 
\begin{equation*}
    \begin{array}{l}
      \displaystyle\int_{0}^{T}\mu_{2}^{2}\Vert \theta_{t}\Vert^{2}_{L^{3/2}(\Omega)}\,dt + \displaystyle\int_{0}^{T}\mu_{2}^{2}\Vert\Delta\theta\Vert^{2}_{L^{3/2}(\Omega)}\,dt  \vspace{0.1cm}\\

    \leq C \left(\Vert\tilde{h}(t)\Vert^{2}_{L^{2}(0,T;L^{3/2}(\Omega))} + \Vert\mu_{2}(0)\theta^{0}\Vert^{2}_{W^{1,3/2}(\Omega)}\right)
    \end{array}
\end{equation*}
and consequently
\begin{equation*}
    \begin{array}{l}
      \displaystyle\int_{0}^{T}\mu_{2}^{2}\Vert \theta_{t}\Vert^{2}_{L^{3/2}(\Omega)}\,dt + \displaystyle\int_{0}^{T}\mu_{2}^{2}\Vert\Delta\theta\Vert^{2}_{L^{3/2}(\Omega)}\,dt
      
         \vspace{0.1cm}\\
         \leq C\left( \Vert\theta^{0}\Vert^{2}_{W_{0}^{1,3/2}(\Omega)}         +\displaystyle\iint\limits_{Q}\rho_{1}^{2}\vert\theta\vert^{2}\,dx\,dt +\displaystyle\iint\limits_{\omega\times (0,T)}\rho_{2}^{2}\vert v_{0}\vert^{2}\,dx\,dt + \displaystyle\int_{0}^{T}\Vert\rho_{3}F_{2}\Vert^{2}_{L^{3/2}(\Omega)}\,dt\right),
    \end{array}
\end{equation*}
achieving the desired inequality.
\hfill
\end{proof}

\begin{prop}\label{proposição extra}  Let the assumptions in Proposition \ref{proposição controle nulo do sistema linear para W{1,p}} be satisfied. Then, the controls verifies
\begin{eqnarray}
\kappa v\in L^{2}(0,T;[H^{2}(\omega)\cap H^{1}_{0}(\omega)]^{N})\cap C^{0}([0,T];V), \,\, (\kappa v)_{t}\in L^{2}(\omega\times (0,T))^{N}.\label{regularidade do controle v} \vspace{0.1cm}   \\
\kappa v_{0}\in L^{2}(0,T;H^{2}(\omega))\cap C^{0}([0,T];H^{1}(\omega)), \,\, (\kappa v_{0})_{t}\in L^{2}(\omega\times (0,T)).\label{regularidade do controle v0}    
\end{eqnarray}
with the estimate 
\begin{equation*}
    \begin{array}{l}
\displaystyle\int_{0}^{T}\displaystyle\int_{\omega}\left[\vert (\kappa v)_{t}\vert^{2} + \vert (\kappa v_{0})_{t}\vert^{2} + \vert\kappa\Delta v\vert^{2} + \vert\kappa \Delta v_{0}\vert^{2}\right]dxdt + \sup\limits_{[0,T]}\Vert\kappa v\Vert^{2}_{V}\vspace{0.1cm}\\
 + \sup\limits_{[0,T]}\Vert\kappa v_{0}\Vert^{2}_{H^{1}(\omega)} \leq  C\left(\Vert y^{0}\Vert^{2}_{V^{p}}+\Vert\theta^{0}\Vert^{2}_{W^{1,p}_{0}(\Omega)}
 + \Vert\rho_{3}F_{1}\Vert^{2}_{L^{q}(0,T;L^{p}(\Omega)^{N})}\right.\\ 
\hspace{3cm}+ \left. \Vert\rho_{3} F_{2}\Vert^{2}_{L^{q}(0,T;L^{p}(\Omega))}\right).
    \end{array}
\end{equation*}
Furthermore, the associated states satisfy 
\begin{equation}\label{desig para y extra}
    \begin{array}{l}
\displaystyle\iint\limits_{Q}\mu_{3}^{2}\vert y_{t}\vert^{2}dx\,dt+\displaystyle\sup_{[0,T]}\displaystyle\int_{\Omega}\mu_{3}^{2}\vert\nabla y\vert^{2}dx +\displaystyle\iint\limits_{Q}\mu_{3}^{2}\vert \Delta y\vert^{2}dx\,dt + 
\displaystyle\sup_{[0,T]}\displaystyle\int_{\Omega}\mu_{2}^{2}\vert y\vert^{2}dx\vspace{0.1cm}\\

+ \displaystyle\iint\limits_{Q}\mu_{2}^{2}\vert\nabla y\vert^{2}dx\,dt 
\leq \,C\, \left( \Vert y^{0}\Vert^{2}_{V^{p}} + \displaystyle\iint\limits_{Q}\rho_{1}^{2}(\vert\theta\vert^{2} + \vert y\vert^{2})\,dx\,dt
 +\displaystyle\iint\limits_{\omega\times (0,T)}\rho_{2}^{2}\vert v\vert^{2}\,dx\,dt\right.\vspace{0.1cm}\\  \left.\hspace{3.5cm} + \Vert\rho_{3}F_{1}\Vert^{2}_{L^{q}(0,T;L^{p}(\Omega)^{N})}\right)
  \end{array}
\end{equation}
and
\begin{equation}\label{desig proposição extra}
     \begin{array}{l}
\displaystyle\iint\limits_{Q}\mu_{3}^{2}\vert \theta_{t}\vert^{2}dx\,dt + \displaystyle\sup_{[0,T]}\displaystyle\int_{\Omega}\mu_{3}^{2}\vert\nabla\theta\vert^{2}dx+\displaystyle\iint\limits_{Q}\mu_{3}^{2}\vert\Delta \theta\vert^{2}dx\,dt + \displaystyle\sup_{[0,T]}\displaystyle\int_{\Omega}\mu_{2}^{2}\vert\theta\vert^{2}dx\vspace{0.1cm}\\
+ \displaystyle\iint\limits_{Q}\mu_{2}^{2}\vert\nabla \theta\vert^{2}dx\,dt
 \leq \,C\, \left( \Vert\theta^{0}\Vert^{2}_{W^{1,p}_{0}(\Omega)}+\displaystyle\iint\limits_{Q}\rho_{1}^{2}\vert\theta\vert^{2}\,dx\,dt+\displaystyle\iint\limits_{\omega\times (0,T)}\rho_{2}^{2}\vert v_{0}\vert^{2}\,dx\,dt \right.\vspace{0.1cm}\\
\left.\hspace{3.5cm} + \Vert\rho_{3}F_{2}\Vert^{2}_{L^{q}(0,T;L^{p}(\Omega))}\right).
  \end{array}
\end{equation}
\end{prop}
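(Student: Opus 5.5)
The plan is to exploit the explicit form of the controls given by \eqref{definição de y, theta, v, v0}, namely $v=-\rho_2^{-2}\varphi\tilde{1}_{\omega}$ and $v_0=-\rho_2^{-2}\psi\tilde{1}_{\omega}$, where $(\varphi,\pi,\psi)$ solves the adjoint system \eqref{adjunto de lad. Bous.} with right-hand sides $G_1=\rho_1^2 y$ and $G_2=\rho_1^2\theta$ (and the coupling term $\varphi_N$ in the $\psi$-equation). Then $\kappa v=-\kappa\rho_2^{-2}\tilde{1}_\omega\varphi$, and by \eqref{pesos control nulo} the weight $\kappa\rho_2^{-2}=e^{s\hat\alpha}\hat\xi^{-1}$ is a function of $t$ alone. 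It is therefore comparable to the Carleman weights $e^{-s\alpha}$ on the region where $\varphi,\psi$ are controlled.

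\textbf{Control regularity.} I would set $\hat\varphi=e^{s\hat\alpha}\hat\xi^{-1}\varphi$ (times a cut-off if needed) and $\hat\psi$ analogously. These solve backward Stokes and heat systems with right-hand sides
\[
e^{s\hat\alpha}\hat\xi^{-1}\rho_1^2 y,\qquad (e^{s\hat\alpha}\hat\xi^{-1})_t\,\varphi .
\]
The first term is bounded in $L^2(Q)$ by $\rho_1 y$, because $e^{s\hat\alpha}\rho_1\le C$ thanks to \eqref{des.16>15}, since $4s\hat\alpha-... $ has a negative exponent. The second term is bounded by the Carleman left-hand side $\iint e^{-2s\alpha}\xi^3|\varphi|^2$, since $\hat\alpha\le\alpha$ gives $e^{s\hat\alpha}\le e^{-s\alpha}\cdot e^{2s\hat\alpha}$, up to checking exponents. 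Standard $H^2$ energy estimates for the backward Stokes and heat equations, applied first to the gradient level and then to the $H^2$ level, give $\hat\varphi\in L^2(H^2)\cap C^0(V)$ and $\hat\varphi_t\in L^2$. This yields \eqref{regularidade do controle v} and \eqref{regularidade do controle v0}. The bounds are in terms of $a((\varphi,\pi,\psi),(\varphi,\pi,\psi))=\langle l,(\varphi,\pi,\psi)\rangle$, which by \eqref{consequencia de Carleman} is controlled by $\|y^0\|^2+\|\theta^0\|^2+\|\rho_3F_1\|^2_{L^2}+\|\rho_3F_2\|^2_{L^2(L^{3/2})}$. This in turn is dominated by the $V^p$, $W^{1,p}_0$ and $L^q(L^p)$ norms in the statement.

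\textbf{State estimates.} For \eqref{desig para y extra}, the $\mu_2$ part follows from \eqref{2.9 CaraLimacoNayny} because $\mu_2\le C\mu_1$. For the $\mu_3$ part I would multiply the $y$-equation by $\mu_3^2(y_t-\nu_0 P\Delta y)$, or equivalently set $\tilde y=\mu_3 y$, and apply Lemma \ref{Stokes em V}. The error term $\mu_3\mu_{3,t}|y|$ is handled by \eqref{des pesos}, and the source $\mu_3\theta$ by $\mu_3\le C\rho_1$. The estimate \eqref{desig proposição extra} follows the same way for $\tilde\theta=\mu_3\theta$ via the $L^2$ maximal regularity of Lemma \ref{parabolic in LpLq} with $r=s=2$. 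For the lower-order $\mu_2$ level, I would first do a plain energy estimate by multiplying by $\mu_2^2\theta$. The source terms $\mu_3F_i$ are bounded in $L^2(Q)$ by $\|\rho_3F_i\|_{L^q(L^p)}$ using Hölder and $\mu_3\le C\rho_3$. The initial data enter through $V^p\hookrightarrow V$ and $W^{1,p}_0\hookrightarrow H^1_0$.

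The main obstacle is the weight bookkeeping in the control step. One must verify that $e^{s\hat\alpha}\hat\xi^{-1}$ and its time derivative are dominated by $e^{-s\alpha}\xi^{3/2}$ and $\rho_1$ uniformly near $t=T$, and that the cut-off commutators on $\omega$ are controlled. This is exactly where the choice of $\kappa=e^{9s\hat\alpha-8s\alpha^*}\hat\xi^{-17}$ and the condition $18\hat\alpha>17\alpha^*$ enter, so I would check these exponent inequalities first.
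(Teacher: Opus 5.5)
Your route is the paper's: multiply the adjoint state from \eqref{definição de y, theta, v, v0} by the purely time-dependent weight $\kappa\rho_{2}^{-2}$. You then treat the result as a backward Stokes/heat problem with zero terminal datum and an $L^{2}(Q)$ right-hand side controlled by the Carleman estimate, and finish with weighted estimates for the states. However, the weight bookkeeping, which you yourself single out as the crux, is wrong as written. From \eqref{pesos control nulo},
\[
\kappa\rho_{2}^{-2}=e^{9s\hat{\alpha}-8s\alpha^{\ast}}\hat{\xi}^{-17}\,e^{-8s\hat{\alpha}+6s\alpha^{\ast}}\hat{\xi}^{16}=e^{s\hat{\alpha}-2s\alpha^{\ast}}\hat{\xi}^{-1},
\]
not $e^{s\hat{\alpha}}\hat{\xi}^{-1}$. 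With your formula both key claims fail:
\begin{itemize}
\item $e^{s\hat{\alpha}}\rho_{1}=e^{3s\hat{\alpha}-s\alpha^{\ast}}\hat{\xi}^{-15/4}$ blows up as $t\to T^{-}$, since $18\hat{\alpha}>17\alpha^{\ast}$ gives $3\hat{\alpha}-\alpha^{\ast}\geq \tfrac{11}{6}\alpha^{\ast}\to+\infty$.
\item The inequality $e^{s\hat{\alpha}}\leq e^{-s\alpha}e^{2s\hat{\alpha}}$ is equivalent to $\alpha\leq\hat{\alpha}$, the reverse of what holds.
\end{itemize}
More fundamentally, the Carleman quantity $\iint_{Q}e^{-2s\alpha}\xi^{3}\vert\varphi\vert^{2}$ allows $\varphi$ to be as large as $e^{s\alpha}\xi^{-3/2}$. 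So a growing weight $e^{s\hat{\alpha}}$ times $\varphi$ cannot be put in $L^{2}$. Also, your $\hat{\varphi}$ would not even be $\kappa v$ up to the factor $\tilde{1}_{\omega}$.

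With the correct weight everything closes using only $\hat{\alpha}\leq\alpha\leq\alpha^{\ast}$; the condition $18\hat{\alpha}>17\alpha^{\ast}$ plays no role in this step. Each source term is handled as follows:
\begin{itemize}
\item $\kappa\rho_{2}^{-2}\rho_{1}=e^{3s(\hat{\alpha}-\alpha^{\ast})}\hat{\xi}^{-19/4}\leq C$ controls $\kappa\rho_{2}^{-2}\rho_{1}^{2}y$ and $\kappa\rho_{2}^{-2}\rho_{1}^{2}\theta$ by $\rho_{1}y$ and $\rho_{1}\theta$.
\item $\kappa\rho_{2}^{-2}\leq e^{-s\alpha^{\ast}}\hat{\xi}^{-1}\leq C\rho^{-1}$ controls the coupling term $\kappa\rho_{2}^{-2}\varphi_{N}$ in the $\psi$-equation. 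You mention this term in your set-up but drop it from the right-hand sides.
\item $\vert(\kappa\rho_{2}^{-2})_{t}\vert\leq Cs\,e^{s\hat{\alpha}-2s\alpha^{\ast}}\hat{\xi}^{1/4}\leq C\rho^{-1}$ controls the terms $(\kappa\rho_{2}^{-2})_{t}\varphi$ and $(\kappa\rho_{2}^{-2})_{t}\psi$.
\end{itemize}
These are exactly the bounds \eqref{desigualdade kappau} the paper uses. No cut-off or commutator is needed: regularity is obtained for $\kappa\rho_{2}^{-2}\varphi$ and $\kappa\rho_{2}^{-2}\psi$ on all of $\Omega$, and the fixed smooth function $\tilde{1}_{\omega}$ is multiplied in afterwards.

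Your state estimates are fine and differ from the paper's only in packaging.
\begin{itemize}
\item You take the $\mu_{2}$-level bound for $y$ from \eqref{2.9 CaraLimacoNayny} via $\mu_{2}=\mu_{1}\hat{\xi}^{-1}\leq C\mu_{1}$.
\item For the $\mu_{3}$ level you apply maximal regularity directly to $\mu_{3}y$ and $\mu_{3}\theta$, absorbing $\mu_{3,t}y$ and $\mu_{3,t}\theta$ through $\vert\mu_{3,t}\vert\leq C\rho_{1}$, which does hold.
\item The paper instead uses the multipliers $\mu_{2}^{2}y$, $\mu_{3}^{2}y_{t}$, $-\mu_{3}^{2}\Delta y$ (and likewise for $\theta$). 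There the term $\mu_{3}\mu_{3,t}\vert\nabla y\vert^{2}$ forces the $\mu_{2}$-level gradient bound to be proved first.
\end{itemize}
Your version avoids that ordering, but it needs the quantitative form of the Stokes regularity lemma, which the paper states only qualitatively.
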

\begin{proof}
The first part of the proof will be dedicated to concluding \eqref{regularidade do controle v} and \eqref{regularidade do controle v0}.

Let us set $u=\rho_{2}^{-2}\varphi$. Hence by the definition given in \eqref{definição de y, theta, v, v0} we have, after some computations, 
\begin{equation}\label{L* kappa u}
    \begin{array}{lll}
         \mathcal{L}_{1}^{\ast}(\kappa u)&=&(\kappa\rho^{-2}_{2})\mathcal{L}_{1}^{\ast}\varphi - (\kappa\rho^{-2}_{2})_{t}\varphi  \\
         &=& (\kappa\rho^{-2}_{2}\rho^{2}_{1})y -(\kappa\rho^{-2}_{2})\nabla\pi -(\kappa\rho^{-2}_{2})_{t}\varphi.
    \end{array}
\end{equation}
We notice that,
\begin{equation}\label{desigualdade kappau}
\vert \kappa\rho_{2}^{-2}\rho^{2}_{1}\vert \leq C\rho_{1},\,
\vert \kappa\rho_{2}^{-2}\vert\leq C,\,
\vert (\kappa\rho_{2}^{-2})_{t}\rho\rho^{-1}\vert\leq C\rho^{-1}.
\end{equation}
And, from the Carleman estimate \eqref{Carleman estimate} and again \eqref{definição de y, theta, v, v0}, we get:
\begin{equation}\label{varphi e psi com rho}
    \begin{array}{l}   
    \displaystyle\iint\limits_{Q}\rho^{-2}\vert\varphi\vert^{2}dxdt \leq \displaystyle\iint\limits_{Q}\rho_{1}^{2}(\vert y\vert^{2}+\vert\theta\vert^{2})dxdt +\displaystyle\iint\limits_{\omega\times (0,T)}\rho_{2}^{2}(\vert v\vert^{2}+\vert v_{0}\vert^{2})dxdt
    <+\infty.
   \end{array}
\end{equation}
Therefore, by \eqref{L* kappa u}, \eqref{desigualdade kappau} and \eqref{varphi e psi com rho} we obtain $(\tilde{u},\tilde{\pi}):=(\kappa u,\kappa\rho_{2}^{-2}\pi)$ solution of the Stokes system
\begin{equation*}
    \left\{\begin{array}{lll}
        \mathcal{L}_{1}^{\ast}\tilde{u} +\nabla\tilde{\pi} = \tilde{f},\,\,\, \nabla\cdot\tilde{u}=0  & \text{in} & Q,  \\
        \tilde{u}=0 & \text{on} & \Sigma,\\
       \tilde{u}(.,T)=0 &\text{in}& \Omega,
    \end{array}\right.
\end{equation*}
with $\tilde{f}=(\kappa\rho^{-2}_{2}\rho_{1})\rho_{1}y - (\kappa\rho^{-2}_{2})_{t}\rho\rho^{-1}\varphi\in L^{2}(Q)^{N}$.

By the standard regularity for solutions of Stokes systems, we can infer the regularity \eqref{regularidade do controle v} for $\kappa v = -\tilde{u}\tilde{1}_{\omega}$.

Similarly, define $w=-\rho_{2}^{-2}\psi$ and note that $v_{0}= w\tilde{1}_{\omega}$. Then,
\begin{equation}\label{L* rhow}
    \begin{array}{lll}
         \mathcal{L}_{2}^{\ast}(\kappa w)&=&-(\kappa\rho^{-2}_{2})\mathcal{L}_{2}^{\ast}\psi + (\kappa\rho^{-2}_{2})_{t}\psi  \\
         &=& -(\kappa\rho^{-2}_{2}\rho^{2}_{1})\theta -(\kappa\rho^{-2}_{2})\varphi e_{N} +(\kappa\rho^{-2}_{2})_{t}\psi\\
         &=& N_{1} + N_{2} + N_{3}.
    \end{array}
\end{equation}
Analyzing each $N_{i}$, $i=\{ 1, 2, 3\}$, we obtain 
\begin{equation*}
\begin{array}{l}
\vert N_{1}\vert \leq e^{3s\hat{\alpha}-3s\alpha^{\ast}}\hat{\xi}^{-19/4}\rho_{1}\theta,\,
\vert N_{2}\vert\leq C e^{s\hat{\alpha}-s\alpha^{\ast}}\hat{\xi}^{-1}\rho^{-1}\varphi e_{N},\,\
\vert N_{3}\vert\leq C\rho^{-1}\psi.
    \end{array}
\end{equation*}

Thus, from \eqref{varphi e psi com rho}, we deduce that $N_{1}+N_{2}+N_{3}\in L^{2}(Q)$. Therefore, taking into consideration the PDE satisfied by $\kappa w$ and the fact that
$(\kappa w)(.,T)=0$, we concluded that
\begin{equation*}
    \kappa w\in L^{2}(0,T;H^{2}(\Omega))\cap C^{0}([0,T];H^{1}(\Omega)), \,\, (\kappa w)_{t}\in L^{2}(Q).
\end{equation*}
In particular, \eqref{regularidade do controle v0} holds.

Now, let's establish the second part of the proposition, that is, \eqref{desig para y extra} and \eqref{desig proposição extra}:

 Firstly, multiplying the linear system $\eqref{lad.Bous.Linear}_{1}$ by $\mu_{2}^{2} y$ (as a test function), integrating in $\Omega$, we have that
\begin{equation*}
\begin{array}{lll}
       \dfrac{1}{2}\dfrac{d}{dt}\displaystyle\int_{\Omega}\mu_{2}^{2}\vert y\vert^{2}dx + \nu_{0}\displaystyle\int_{\Omega}\mu_{2}^{2}\vert\nabla y\vert^{2}dx &\leq & C\left( \displaystyle\int_{\Omega}\rho_{1}^{2}(\vert\theta\vert^{2}+\vert y\vert^{2})dx + \displaystyle\int_{\omega}\rho_{2}^{2}\vert v\vert^{2}dx\right.\vspace{0.1cm}\\
      &\quad{}& \left.+\displaystyle\int_{\Omega}\rho_{3}^{2}\vert F_{1}\vert^{2}dx \right)
    \end{array}
\end{equation*}
thanks to $\vert\mu_{2}\mu_{2,t}\vert\leq C\rho^{2}_{1}$, $\mu_{2}^{2}\leq C\rho_{1}^{2}$, $\mu_{2}^{2}\leq C\rho_{2}^{2}$ and $\mu_{2}^{2}\leq C\rho_{3}^{2}$. Then, integrating from $0$ to $t$ we find that
\begin{equation}\label{desig. para m2 y}
\begin{array}{lll}
\displaystyle\sup_{[0,T]}\displaystyle\int_{\Omega}\mu_{2}^{2}\vert y\vert^{2}dx + \displaystyle\iint\limits_{Q}\mu_{2}^{2}\vert\nabla y\vert^{2}dx\,dt 
&\leq& \,C\, \left( \Vert y^{0}\Vert_{H}^{2}+\displaystyle\iint\limits_{Q}\rho_{1}^{2}(\vert\theta\vert^{2}+\vert y\vert^{2})\,dx\,dt\right.\vspace{0.1cm}\\
&\quad{}&\left.+\displaystyle\iint\limits_{\omega\times (0,T)}\rho_{2}^{2}\vert v\vert^{2}\,dx\,dt + \displaystyle\iint\limits_{Q}\rho_{3}^{2}\vert F_{1}\vert^{2}dx\,dt\right).
\end{array}
\end{equation}

Next, using $\mu_{3}^{2} y_{t}$ as a test function in $\eqref{lad.Bous.Linear}_{1}$, integrating in $\Omega$ and taking into account that $ \mu_{3}^{2}\leq C\rho_{2}^{2}\leq C\rho_{3}^{2}$ and $\vert\mu_{3}\mu_{3,t} \vert\leq C\mu_{2}^{2}$, we obtain
    \begin{equation*}
        \begin{array}{lll}
\displaystyle\int_{\Omega}\mu_{3}^{2}\vert y_{t}\vert^{2}dx + \nu_{0}\dfrac{d}{dt}\displaystyle\int_{\Omega}\mu_{3}^{2}\vert\nabla y\vert^{2}dx
        & \leq & C\left(\displaystyle\int_{\omega}\rho_{2}^{2}\vert v\vert^{2} + \displaystyle\int_{\Omega}\rho_{3}^{2}\vert F_{1}\vert^{2}dx + \displaystyle\int_{\Omega}\rho_{1}^{2}\vert\theta\vert^{2}dx\right.\vspace{0.1cm}\\
        & \quad{}& \left. +  \displaystyle\int_{\Omega}\mu_{2}^{2}\vert\nabla y\vert^{2}dx \right).
        \end{array}
    \end{equation*}
Hence, integrating from $0$ to $t$ and making use of \eqref{desig. para m2 y}, we deduce that 
\begin{equation}\label{desig, mu3 y}
    \begin{array}{l}
\displaystyle\iint\limits_{Q}\mu_{3}^{2}\vert y_{t}\vert^{2}dx\,dt + \displaystyle\sup_{[0,T]}\displaystyle\int_{\Omega}\mu_{3}^{2}\vert\nabla y\vert^{2}dx
 \leq \,C\, \left( \Vert y^{0}\Vert^{2}_{V}+ \displaystyle\iint\limits_{\omega\times (0,T)}\rho_{2}^{2}\vert v\vert^{2}\,dx\,dt \right.\vspace{0.1cm}\\
\left.\hspace{4cm} + \displaystyle\iint\limits_{Q}\rho_{1}^{2}(\vert\theta\vert^{2} + \vert y\vert^{2})\,dx\,dt + \displaystyle\iint\limits_{Q}\rho_{3}^{2}\vert F_{1}\vert^{2}dx\,dt\right).
  \end{array}
\end{equation}

Finally, multiplying $\eqref{lad.Bous.Linear}_{1}$ by $-\mu_{3}^{2}\Delta y $ and followed in a similar way to the previous estimates, we arrive at
\begin{equation}\label{desig mu3 delta y}
    \begin{array}{l}
\displaystyle\sup_{[0,T]}\displaystyle\int_{\Omega}\mu_{3}^{2}\vert\nabla y\vert^{2}dx +\displaystyle\iint\limits_{Q}\mu_{3}^{2}\vert \Delta y\vert^{2}dx\,dt
 \leq \,C\, \left( \Vert y^{0}\Vert^{2}_{V}+\displaystyle\iint\limits_{Q}\rho_{1}^{2}(\vert\theta\vert^{2} + \vert y\vert^{2})\,dx\,dt\right.\vspace{0.1cm}\\
\left.\hspace{4.6cm} +\displaystyle\iint\limits_{\omega\times (0,T)}\rho_{2}^{2}\vert v\vert^{2}\,dx\,dt  + \displaystyle\iint\limits_{Q}\rho_{3}^{2}\vert F_{1}\vert^{2}dx\,dt\right).
  \end{array}
\end{equation}
Therefore, from \eqref{desig. para m2 y}-\eqref{desig mu3 delta y} and by Sobolev's immersions $V^{p}\hookrightarrow V$ and $L^{q}(0,T;L^{p}(\Omega))\hookrightarrow L^{2}(0,T;L^{2}(\Omega))$ we see that  \eqref{desig para y extra} holds.

The estimate \eqref{desig proposição extra} is obtained by multiplying $\mu_{2}^{2}\theta$, $\mu_{3}^{2}\theta_{t}$ and $-\mu_{3}^{2} \Delta\theta$ one after the other in $\eqref{lad.Bous.Linear}_{2}$ and using the same arguments as before.
\hfill
\end{proof}

The following result is a proposition by \cite{JuanJessica} and will play a crucial role in the conclusion of our main theorems, as it enables us to derive enhanced regularity for the solution \((y, \theta)\) in weighted function spaces.

\begin{prop}\label{Proposição 7.1 de Juan Jessica} If $u\in {L}^{q}(0,T;W^{2,p}(\Omega))$, $u_{t}\in L^{q}(0,T;L^{p}(\Omega))$ then $u\in C^{0}([0,T];W^{1,p}(\Omega))$, $p>2$ and $p<q<\infty$.
\end{prop}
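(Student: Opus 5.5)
This is a maximal-regularity trace statement. The natural route is to identify the time trace space of the class
\[
E:=\{u\in L^{q}(0,T;W^{2,p}(\Omega)),\ u_{t}\in L^{q}(0,T;L^{p}(\Omega))\}
\]
and then embed it into $W^{1,p}(\Omega)$.

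\textbf{Step 1: reduce to the whole line.} Using a bounded extension operator in $x$ (regular $\partial\Omega$) together with a reflection/cut-off argument in $t$, we may work with $u$ on $\mathbb{R}\times\mathbb{R}^{N}$ or on the half-line. The extension keeps $u$ in $W^{1,q}(\mathbb{R};L^{p})\cap L^{q}(\mathbb{R};W^{2,p})$ with comparable norms.

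\textbf{Step 2: continuity into the real interpolation space.} We invoke the classical trace theorem (Lions--Peetre, see Lunardi or Amann):
\[
W^{1,q}(0,T;X_{0})\cap L^{q}(0,T;X_{1})\hookrightarrow C^{0}\big([0,T];(X_{0},X_{1})_{1-1/q,q}\big),
\]
with $X_{0}=L^{p}(\Omega)$ and $X_{1}=W^{2,p}(\Omega)$. Continuity, not just boundedness, comes from density of smooth functions in $E$ and the uniform estimate $\sup_{t}\|u(t)\|_{(X_0,X_1)_{1-1/q,q}}\le C\|u\|_{E}$. That estimate is proved by writing
\[
u(t)=u(s)+\int_{s}^{t}u_{t},
\]
averaging in $s$, and using the $K$-functional characterization.

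\textbf{Step 3: identify the interpolation space.} We have
\[
(L^{p},W^{2,p})_{1-1/q,q}=B^{2-2/q}_{p,q}(\Omega).
\]
Since $q>p>2$, we get $2-2/q>1$, so $2-2/q=1+\varepsilon$ with $\varepsilon=1-2/q>0$. The Besov embedding $B^{1+\varepsilon}_{p,q}\hookrightarrow W^{1,p}$ holds for any $\varepsilon>0$, for instance through
\[
B^{1+\varepsilon}_{p,q}\hookrightarrow B^{1}_{p,1}\hookrightarrow W^{1,p}.
\]
Composing with Step 2 gives $u\in C^{0}([0,T];W^{1,p}(\Omega))$.

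Only $q>2$ is really needed here; the hypotheses $p>2$ and $p<q$ are stronger than necessary.

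\textbf{Main obstacle.} The hard part is Step 2 with an honest proof of continuity, together with the exact identification in Step 3 on a bounded domain. I would cite the abstract trace theorem rather than reprove it. An alternative avoiding Besov spaces is the more elementary complex interpolation route: $E\hookrightarrow C^{0}([0,T];[L^{p},W^{2,p}]_{\theta})$ for every $\theta<1-1/q$, and choosing $\theta>1/2$ (possible since $q>2$) gives $[L^{p},W^{2,p}]_{\theta}=H^{2\theta,p}\hookrightarrow W^{1,p}$. The continuity in this route follows from the Hölder estimate
\[
\|u(t)-u(s)\|_{[X_0,X_1]_{\theta}}\le C|t-s|^{(1-\theta)-1/q}\,\|u\|_{E}.
\]
This estimate is obtained by interpolating $\|u(t)-u(s)\|_{L^{p}}\le |t-s|^{1-1/q}\|u_{t}\|_{L^{q}L^{p}}$ against the $W^{2,p}$ bound in the trace space. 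I would pursue this second route first.
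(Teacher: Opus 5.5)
Your argument is correct, but it takes a different route from the paper, because the paper proves nothing here: it simply refers to Proposition 7.1 in \cite{JuanJessica}.

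\textbf{Comparison with the paper.} You obtain the statement as a corollary of classical trace/interpolation theory, in three steps:
\begin{itemize}
\item the trace embedding $W^{1,q}(0,T;L^{p})\cap L^{q}(0,T;W^{2,p})\hookrightarrow C^{0}([0,T];(L^{p},W^{2,p})_{1-1/q,q})$;
\item the identification of this trace space with $B^{2-2/q}_{p,q}(\Omega)$;
\item the embedding $B^{2-2/q}_{p,q}\hookrightarrow B^{1}_{p,1}\hookrightarrow W^{1,p}$ once $2-2/q>1$.
\end{itemize}
All three facts are standard and correctly applied. What this buys over the paper's bare citation is transparency. It shows that the hypotheses $p>2$ and $p<q$ play no role and only $q>2$ is used. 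That condition holds automatically in the range $3<p\le 6$, $p<q$ where the paper applies the result.

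\textbf{A caution on your second route.} You have no pointwise-in-time bound on $\Vert u(t)-u(s)\Vert_{W^{2,p}}$, only $u\in L^{q}(0,T;W^{2,p})$. So the H\"older estimate cannot come from interpolating $L^{p}$ against $W^{2,p}$. Your exponent $(1-\theta)-1/q$ is exactly what you get by interpolating against the sup bound in $(L^{p},W^{2,p})_{1-1/q,q}$. As written, this route therefore still depends on Step 2 and is not more elementary.

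\textbf{A genuinely elementary version.} Estimate the $K$-functional directly. Split $u(t)-u(s)$ using time averages $u_{\delta}(r)=\delta^{-1}\int_{I_r}u$ over intervals $I_r\ni r$ of length $\delta=\tau$. This gives
\[
K\bigl(\tau,u(t)-u(s);L^{p},W^{2,p}\bigr)\le C\min\{|t-s|,\tau\}^{1-1/q}\,\Vert u\Vert_{E},
\]
hence, for every $\theta<1-1/q$,
\[
\Vert u(t)-u(s)\Vert_{(L^{p},W^{2,p})_{\theta,\infty}}\le C|t-s|^{1-1/q-\theta}\Vert u\Vert_{E}.
\]
Now choose $\theta\in(1/2,1-1/q)$, which is possible because $q>2$. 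Use $(L^{p},W^{2,p})_{\theta,\infty}\hookrightarrow(L^{p},W^{2,p})_{1/2,1}\hookrightarrow W^{1,p}$, where the last embedding follows from $\Vert v\Vert_{W^{1,p}}\le C\Vert v\Vert_{L^{p}}^{1/2}\Vert v\Vert_{W^{2,p}}^{1/2}$. Together with $u(t_0)\in W^{2,p}$ for a.e.\ $t_0$, this proves the proposition without the trace theorem, Hardy's inequality, or any Besov or Bessel-potential identification.
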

\begin{proof} See Proposition 7.1 in \cite{JuanJessica}.
\end{proof}

The subsequent proposition will be fundamental in ensuring the null controllability of the system \eqref{lad.boussinesq caso extra}, and its proof will be derived from the preceding results in this section.
\begin{prop}\label{Proposicao com rhobarratheta}
    Let the assumptions in Proposition \ref{proposição extra} be satisfied. Then, the following estimates are valid
    \begin{equation}\label{estimate para theta W{1,p}}
    \begin{array}{l}
      \Vert (\kappa\theta)_{t}\Vert_{L^{q}(0,T;L^{p}(\Omega))} + \Vert\kappa\theta\Vert_{L^{q}(0,T;W^{2,p}(\Omega))} + \Vert\kappa\theta\Vert_{C^{0}(0,T;W^{1,p}(\Omega))}\vspace{0.1cm}\\
         \leq C\left(\Vert y^{0}\Vert^{}_{V^{p}}+\Vert\theta^{0}\Vert^{}_{W^{1,p}_{0}(\Omega)} +\Vert\rho_{3}F_{1}\Vert^{}_{L^{q}(0,T;L^{p}(\Omega)^{N})} + \Vert \rho_{3} F_{2}\Vert^{}_{L^{q}(0,T;L^{p}(\Omega))} \right)
    \end{array}
\end{equation}
and 
\begin{equation}\label{estimate para y em Vp}
    \begin{array}{l}
      \Vert (\kappa y)_{t}\Vert_{L^{q}(0,T;L^{p}(\Omega)^{N})} + \Vert\kappa y\Vert_{L^{q}(0,T;W^{2,p}(\Omega)^{N})} + \Vert\kappa y\Vert_{C^{0}(0,T;W^{1,p}(\Omega)^{N})}\vspace{0.1cm}\\
       \leq C\left(\Vert y^{0}\Vert^{}_{V^{p}}+\Vert\theta^{0}\Vert^{}_{W^{1,p}_{0}(\Omega)} + \Vert\rho_{3}F_{1}\Vert^{}_{L^{q}(0,T;L^{p}(\Omega)^{N})} + \Vert \rho_{3} F_{2}\Vert^{}_{L^{q}(0,T;L^{p}(\Omega))} \right)
    \end{array}
\end{equation}
    \end{prop}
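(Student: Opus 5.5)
The plan is to multiply the states by the weight $\kappa$ and apply maximal $L^q$--$L^p$ regularity: Lemma \ref{parabolic in LpLq} for the temperature and Lemma \ref{Stokes para Lp} for the velocity. Continuity in $W^{1,p}$ then comes from Proposition \ref{Proposição 7.1 de Juan Jessica}. We treat $\theta$ first, because its equation in \eqref{lad.Bous.Linear} does not involve $y$. We set $\bar\theta=\kappa\theta$, which satisfies
\begin{equation*}
\bar\theta_t-\nu_0\Delta\bar\theta=\kappa v_0\tilde 1_\omega+\kappa F_2+\kappa_t\theta \ \text{ in } Q,\qquad \bar\theta=0 \ \text{ on } \Sigma,\qquad \bar\theta(0)=\kappa(0)\theta^0 .
\end{equation*}
The initial term is controlled by $\Vert\theta^0\Vert_{W^{1,p}_0(\Omega)}$. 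For the right-hand side we must bound each of the three terms in $L^q(0,T;L^p(\Omega))$ by the data.

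For $\kappa F_2$ this is immediate, since $\kappa\le C\rho_3$ by \eqref{des pesos}. For $\kappa v_0\tilde1_\omega$ we use Proposition \ref{proposição extra}: $\kappa v_0\in L^2(0,T;H^2(\omega))\cap C^0([0,T];H^1(\omega))$ and $(\kappa v_0)_t\in L^2$. When $N=3$ we have $H^2\hookrightarrow L^\infty$, so $\kappa v_0\in L^2(0,T;L^\infty)\cap L^\infty(0,T;L^6)$. Interpolating, $\Vert\kappa v_0\Vert_{L^p}\le \Vert\kappa v_0\Vert_{L^6}^{6/p}\Vert\kappa v_0\Vert_{L^\infty}^{1-6/p}$, which gives $L^{r}$ in time with $r=2p/(p-6)\cdot$… In fact, for $p\le 6$ we have directly $\kappa v_0\in L^\infty(0,T;H^1)\subset L^\infty(0,T;L^p)\subset L^q(0,T;L^p)$. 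So this term is bounded by the estimate of Proposition \ref{proposição extra}. The term $\kappa_t\theta$ is the delicate one. Since $|\kappa_t|\le C\mu_3$ by \eqref{des pesos}, it suffices to bound $\mu_3\theta$ in $L^q(0,T;L^p)$. From \eqref{desig proposição extra}, $\mu_3\theta\in L^\infty(0,T;H^1_0)\hookrightarrow L^\infty(0,T;L^6)\subset L^q(0,T;L^p)$, again because $p\le6$. The right-hand side of \eqref{desig proposição extra} is then controlled by the data via \eqref{2.8 de CaraLimacoNany} and the embeddings used in Proposition \ref{proposição controle nulo do sistema linear para W{1,p}}. Lemma \ref{parabolic in LpLq} now yields the bounds for $\bar\theta_t$ and $\bar\theta$ in $L^q(0,T;W^{2,p})$; for the full $W^{2,p}$ norm, rather than just $\Delta\bar\theta$, we use elliptic regularity with Dirichlet data. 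Proposition \ref{Proposição 7.1 de Juan Jessica} gives the $C^0([0,T];W^{1,p})$ part, with the bound obtained through the closed-graph or trace estimate underlying that proposition. This proves \eqref{estimate para theta W{1,p}}.

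For $y$ we set $(\bar y,\bar P)=(\kappa y,\kappa P)$. This pair solves a Stokes system with right-hand side
\begin{equation*}
\kappa v\tilde1_\omega+\nu_0\kappa\theta e_N+\kappa F_1+\kappa_t y
\end{equation*}
and initial datum $\kappa(0)y^0\in V^p\subset W^{1,p}$. The control term is handled exactly as before, using $\kappa v\in C^0([0,T];V)$. The term $\kappa F_1$ is handled by $\kappa\le C\rho_3$. The term $\kappa_t y$ is handled by $\mu_3 y\in L^\infty(0,T;V)$ from \eqref{desig para y extra}. For $\kappa\theta$ we can either use the same $H^1$ argument or simply invoke the estimate just proved for $\theta$. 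Lemma \ref{Stokes para Lp} together with Proposition \ref{Proposição 7.1 de Juan Jessica} gives \eqref{estimate para y em Vp}, and the right-hand side includes $\Vert\theta^0\Vert$ and $F_2$ because of the coupling.

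The main obstacle is controlling the lower-order terms $\kappa_t\theta$ and $\kappa_t y$ in $L^q(0,T;L^p)$ with only weighted $L^2$-type information available. The route is to exploit $\kappa_t\le C\mu_3$ together with the $L^\infty(0,T;H^1)$ bounds from Proposition \ref{proposição extra} and the Sobolev embedding $H^1\hookrightarrow L^6$. This is precisely where the restriction $p\le 6$ enters. A second point that needs care is the quantitative form of Proposition \ref{Proposição 7.1 de Juan Jessica}: the continuity statement must be upgraded to a norm bound $\Vert u\Vert_{C^0(W^{1,p})}\le C(\Vert u\Vert_{L^q(W^{2,p})}+\Vert u_t\Vert_{L^q(L^p)})$. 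This follows from the closed graph theorem, since the embedding is linear and everywhere defined.
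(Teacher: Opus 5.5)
Your proposal is correct and follows the paper's proof essentially step by step. You weight the states by $\kappa$, bound the source terms ($\kappa v_0\tilde 1_\omega$, $\kappa F_2$, $\kappa_t\theta$, respectively $\kappa v\tilde 1_\omega$, $\kappa F_1$, $\kappa_t y$, $\nu_0\kappa\theta e_N$) in $L^q(0,T;L^p(\Omega))$ via \eqref{des pesos}, the control regularity and weighted energy estimates of the preceding proposition, and $L^\infty(0,T;H^1(\Omega))\hookrightarrow L^q(0,T;L^p(\Omega))$ for $p\le 6$, then conclude with maximal $L^q$--$L^p$ regularity for the heat and Stokes equations and the $C^0([0,T];W^{1,p}(\Omega))$ embedding.

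The aborted interpolation fragment for $\kappa v_0$ is wrong as written (the exponent $1-6/p$ is negative for $p<6$) and should be deleted. Your closed-graph argument turning that embedding into a norm bound, and your elliptic-regularity remark for the full $W^{2,p}$ norm, are useful clarifications of steps the paper leaves implicit.
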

\begin{proof}
Define $\tau=\kappa\theta$. Then, by (\ref{lad.Bous.Linear}), we have 
\begin{equation*}
\left\{
    \begin{array}[l]{lll}
     \tau_{t}-\nu_{0}\Delta\tau = b &\text{in} & Q,  \\
      \tau(x,t)=0 & \text{on} & \Sigma,  \\
      \tau(x,0)=\kappa(0)\theta^{0}(x) & \text{in} & \Omega,
    \end{array}\right.
\end{equation*}
where $b = \kappa v_{0}\tilde{1}_{\omega}+\kappa F_{2}+{\kappa_{t}}\theta$. Thus, as consequence of \eqref{des pesos}, \eqref{regularidade do controle v0} and \eqref{desig proposição extra} together with the fact that  $C^{0}(0,T;H^{1}(\Omega))\hookrightarrow L^{q}(0,T;L^{p}(\Omega))$ we have $b\in L^{q}(0,T;L^{p}(\Omega))$. Then, applying the Lemma \ref{parabolic in LpLq} and Proposition \ref{Proposição 7.1 de Juan Jessica}, we get \eqref{estimate para theta W{1,p}}.

Now, notice that, $(\gamma, \bar{P})=(\kappa y,\kappa P)$ solve the Stokes equation 
\begin{equation*}
    \left\{\begin{array}{lll}
       \gamma_{t}-\nu_{0}\Delta\gamma + \nabla\bar{P} = \bar{b},\,\,\, \nabla\cdot\gamma =0 &\text{in}& Q,\\
       \gamma(x,t)=0 &\text{on}& \Sigma,\\
       \gamma(x,0)=\kappa(0)y^{0}(x) &\text{in}& \Omega,
    \end{array}\right.
\end{equation*}
where $\bar{b} = \kappa v\tilde{1}_{\omega} + \kappa_{t}y + \nu_{0}\kappa\theta e_{N} + \kappa F_{1}$. Hence, applying \eqref{des pesos}, \eqref{regularidade do controle v}, \eqref{desig para y extra} and \eqref{estimate para theta W{1,p}} we have $\bar{b}\in L^{q}(0,T;L^{p}(\Omega)^{N})$. Then, from the Lemma \ref{Stokes para Lp} and again by Proposition \ref{Proposição 7.1 de Juan Jessica}, \eqref{estimate para y em Vp} is acquired.
\hfill
\end{proof}

\section{Proofs of the main theorems}\label{Sec.3}

In this section we will prove Theorems \ref{Teo principal control nulo} and \ref{Teo extra control nulo}.

\subsection{Development for the proof of the Theorem \ref{Teo principal control nulo}}

Here, we will proved the local null controllability for the system \eqref{lad.boussinesq}. Let us consider the Stokes operator $A:D(A)\rightarrow H$, where $D(A):=V\cap H^{2}(\Omega)^{N}$, $Aw=P(-\Delta w)$ for all $w\in D(A)$ and $P:L^{2}(\Omega)^{N}\rightarrow H$ is the orthogonal projector.

Let $ \mathcal{E}_{N}$ be (for $N=2$ or $N=3$) the following space:
\begin{equation}\label{espaço EN}
    \begin{array}{c}
   \mathcal{E}_{N}=\lbrace (y,P,\theta,v,v_{0}): \rho_{1}y, \rho_{2}v\tilde{1}_{\omega}\in L^{2}(Q)^{N}, 
   y\in L^{2}(0,T;D(A)), \nabla y\in L^{2}(Q)^{N\times N},\vspace{0.1cm}\\
   P\in L^{2}(0,T;H^{1}(\Omega)),
   \rho_{1}\theta, \rho_{2}v_{0}\tilde{1}_{\omega}\in L^{2}(Q), \theta\in L^{2}(0,T;W^{2,3/2}(\Omega)),\vspace{0.1cm}\\ 
  \text{for}\, F_{1}:=\mathcal{L}_{1}y+\nabla P - \nu_{0}\theta e_{N} - v\tilde{1}_{\omega}\,
  \text{and}\, F_{2}:=\mathcal{L}_{2}\theta -v_{0}\tilde{1}_{\omega},   
  \rho_{3}F_{1}\in L^{2}(Q)^{N},\vspace{0.1cm}\\ 
   \rho_{3}F_{2}\in L^{2}(0,T;L^{3/2}(\Omega)),  
  \nabla\cdot y\equiv 0, y(.,0)\in V, \theta(.,0)\in W_{0}^{1,3/2}(\Omega), \theta\mid_{\Sigma} =0 \rbrace,   
    \end{array}
\end{equation}
emphasizing that $\mathcal{L}_{1}y=y_{t} -\nu_{0}\Delta y$ and $\mathcal{L}_{2}\theta=\theta_{t} -\nu_{0}\Delta\theta$. Thus, it's clear that $\mathcal{E}_{N}$ is a Banach space for the norm $\Vert .\Vert_{ \mathcal{E}_{N}}$, where
\begin{equation*}
    \begin{array}{l}
     \Vert (y,P,\theta,v,v_{0})\Vert^{2}_{ \mathcal{E}_{N}}:= \Vert y\Vert^{2}_{L^{2}(0,T;D(A))} + \Vert\theta\Vert^{2}_{L^{2}(0,T;W^{2,3/2}(\Omega))}\vspace{0.1cm}  
    \, + \,\Vert \rho_{1}y\Vert^{2}_{L^{2}(Q)^{N}} \\ + \Vert\rho_{1}\theta\Vert^{2}_{L^{2}(Q)} + \Vert P\Vert^{2}_{L^{2}(0,T;H^{1}(\Omega))} + \Vert\rho_{2}v\Vert^{2}_{L^{2}(\omega\times (0,T))^{N}} 
    \, +\, \Vert\rho_{2}v_{0}\Vert^{2}_{L^{2}(\omega\times (0,T))}\vspace{0.1cm}\\ + \Vert\rho_{3}F_{1}\Vert^{2}_{L^{2}(Q)^{N}}
     \,+\, \Vert\rho_{3}F_{2}\Vert^{2}_{L^{2}(0,T;L^{3/2}(\Omega))} 
      + \Vert\theta(.,0)\Vert^{2}_{W_{0}^{1,3/2}(\Omega)}.
    \end{array}
\end{equation*}

Due to Proposition \ref{proposição 2.2} we get:
\begin{equation}
    \begin{array}{l}
    \Vert \mu_{1}y\Vert_{L^{\infty}(0,T;H)} + \Vert\mu_{1}y\Vert_{L^{2}(0,T;V)} 
    + \Vert\mu_{2}y\Vert_{L^{\infty}(0,T;V)} + \Vert\mu_{2}y\Vert_{L^{2}(0,T;D(A))}\vspace{0.1cm}\\ 
    \,\,+\,\Vert\mu_{2}y_{t}\Vert_{L^{2}(0,T;L^{2}(\Omega)^{N})} + \Vert\mu_{2}\theta_{t}\Vert_{L^{2}(0,T;L^{3/2}(\Omega))} 
    +\Vert\mu_{2}\theta\Vert_{L^{2}(0,T;W^{2,3/2}(\Omega))}\vspace{0.1cm}\\
    \,\,\leq C\Vert(y,P,\theta,v,v_{0})\Vert_{ \mathcal{E}_{N}}.    \end{array}\label{conseq. da prop}
\end{equation}
Furthermore, if $(y,P,\theta,v,v_{0})\in  \mathcal{E}_{N}$, then $y_{t}\in L^{2}(Q)^{N}$, 
whence $y:[0,T]\rightarrow V$ 
is continuous (see, \cite{Evans}) and we have $y(.,0)\in V$, with
\begin{equation}\label{des aplicados em 0}
    \Vert y(.,0)\Vert_{V}\leq C\Vert(y,P,\theta,v,v_{0})\Vert_{ \mathcal{E}_{N}},\,\, 
\end{equation}

Now, let us introduce the Banach space
\begin{equation}\label{espaço ZN}
    \mathcal{Z}_{N}=L^{2}(\rho_{3}^{2};Q)^{N}\times V\times L^{2}(\rho_{3}^{2}(0,T);L^{3/2}(\Omega))\times W^{1,3/2}_{0}(\Omega),
\end{equation}
where
$L^{2}({\rho}_{3}^{2}(0,T);L^{3/2}(\Omega))$ be the Banach space formed by the measurable functions $u =
u(x; t)$ such that ${\rho}_{3}u\in L^{2}(0,T; L^{3/2}(\Omega))$, i.e.,
\[
\Vert u\Vert^{2}_{L^{2}({\rho}_{3}^{2}(0,T);L^{3/2}(\Omega))}=\displaystyle\int_{0}^{T}{\rho}_{3}^{2}\Vert u(t)\Vert_{L^{3/2}(\Omega)}^{2}dt< +\infty.
\]
Replacing $L^{3/2}(\Omega)$ by $L^{2}(\Omega)$ in $L^{2}({\rho}_{3}^{2}(0,T);L^{3/2}(\Omega))$, we get $L^{2}(\rho_{3}^{2};Q)$.

Finally, consider also the mapping $\mathcal{F}:\mathcal{E}_{N}\rightarrow \mathcal{Z}_{N}$, such that
\begin{equation}\label{Mapa F}
    \mathcal{F}(y,P,\theta,v,v_{0})=(\mathcal{F}_{1},\mathcal{F}_{2}, \mathcal{F}_{3},\mathcal{F}_{4})(y,P,\theta,v,v_{0})
\end{equation}
where
\begin{equation}\label{Mapas Fi}
    \left\{
    \begin{array}{l}
      \mathcal{F}_{1}(y,P,\theta,v,v_{0}):=y_{t}-\nu(\nabla y)\Delta y + (y\cdot\nabla)y +\nabla P - \nu_{0}\theta e_{N} - v\tilde{1}_{\omega},\\
      \mathcal{F}_{2}(y,P,\theta,v,v_{0}):= y(.,0),\\
      \mathcal{F}_{3}(y,P,\theta,v,v_{0}):= \theta_{t} - \nu(\nabla y)\Delta\theta +y\cdot\nabla\theta 
      -\nu(\nabla y)Dy:\nabla y- v_{0}\tilde{1}_{\omega},
      \\
      \mathcal{F}_{4}(y,P,\theta,v,v_{0}):= \theta(.,0).
    \end{array}\right.
\end{equation}
Note that, in $(\ref{Mapas Fi})_{1}$ 
we used the definition of $\nabla\cdot\left(\nu(\nabla y)Dy\right) $ to rewrite in the form $ \nu(\nabla y)\Delta y$, since $\nabla\cdot y=0$. This is,
\begin{equation*}
\begin{array}{ll}
    \nabla\cdot\left(\nu(\nabla y) Dy\right)&=\nabla\cdot\left((\nu_{0}+\nu_{1}\Vert\nabla y\Vert^{2})(\nabla y +\nabla^{T}y)\right)\\
    &=(\nu_{0}+\nu_{1}\Vert\nabla y\Vert^{2})\nabla\cdot(\nabla y) + (\nu_{0}+\nu_{1}\Vert\nabla y\Vert^{2}) \nabla\cdot(\nabla^{T}y)\\
    &=(\nu_{0}+\nu_{1}\Vert\nabla y\Vert^{2})\Delta y + (\nu_{0}+\nu_{1}\Vert\nabla y\Vert^{2})\nabla(\nabla\cdot y)\\
    &=(\nu_{0}+\nu_{1}\Vert\nabla y\Vert^{2})\Delta y =\nu(\nabla y)\Delta y.
    \end{array}
\end{equation*}

We are interested in apply the Mapping Inverse Theorem in infinite dimensional spaces, that can be found in \cite{Alekseev}, and is given below, where $B_{r}(0)$ and $B_{\delta}(\zeta_{0})$ are open ball, respectively of radius $r$ and $\delta$.

\begin{theorem}[Mapping Inverse Theorem]\label{Liusternik}
Let $ \mathcal{E}$ and $ \mathcal{Z}$ be Banach spaces and let $\mathcal{F}:B_{r}(0)\subset  \mathcal{E}\rightarrow  \mathcal{Z}$ be a $\mathcal{C}^{1}$ mapping. Let as assume that $\mathcal{F}^{\prime}(0)$ is onto and let us set $\mathcal{F}(0)=\zeta_{0}$. Then, there exist $\delta >0$, a mapping $W: B_{\delta}(\zeta_{0})\subset  \mathcal{Z}\rightarrow  \mathcal{E}$ and a constant $K>0$ such that
\begin{equation*}
    W(z)\in B_{r}(0),\,\, \mathcal{F}(W(z))=z\,\, \text{and}\,\, \Vert W(z)\Vert_{ \mathcal{E}}\leq K\Vert z-\mathcal{F}(0)\Vert_{ \mathcal{Z}}\, \, \forall\, z\in B_{\delta}(\zeta_{0}).
\end{equation*}
In particular, $W$ is a local inverse-to-the-right of $\mathcal{F}$.
\end{theorem}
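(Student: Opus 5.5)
The plan is to prove this by a Newton-type iteration (Graves' argument). The only ingredients are the open mapping theorem, applied to the surjective operator $\mathcal{F}^{\prime}(0)$, and the continuity of $\mathcal{F}^{\prime}$ at $0$. No closed complement of $\ker\mathcal{F}^{\prime}(0)$ and no linear right inverse are needed, and $W$ will simply be a (possibly nonlinear, non-unique) selection.

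\textbf{Step 1 (uniform solvability of the linearized equation).} Set $\Lambda:=\mathcal{F}^{\prime}(0)\in\mathcal{L}(\mathcal{E},\mathcal{Z})$, which is onto. By the open mapping theorem there is $M>0$ with the following property: for every $w\in\mathcal{Z}$ there exists $e\in\mathcal{E}$ with $\Lambda e=w$ and $\Vert e\Vert_{\mathcal{E}}\leq M\Vert w\Vert_{\mathcal{Z}}$. This follows because $\Lambda(B_{1}(0))$ contains a ball $B_{1/M'}(0)$, and we take any $M>M'$.

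\textbf{Step 2 (smallness of the nonlinearity).} Since $\mathcal{F}$ is $\mathcal{C}^{1}$, we can choose $0<r_{0}\leq r$ such that $\Vert\mathcal{F}^{\prime}(x)-\Lambda\Vert\leq 1/(2M)$ for all $x\in B_{r_{0}}(0)$. The mean value inequality, applied to $x\mapsto\mathcal{F}(x)-\Lambda x$ on the convex set $B_{r_{0}}(0)$, then gives
\begin{equation*}
\Vert\mathcal{F}(x_{1})-\mathcal{F}(x_{2})-\Lambda(x_{1}-x_{2})\Vert_{\mathcal{Z}}\leq \tfrac{1}{2M}\Vert x_{1}-x_{2}\Vert_{\mathcal{E}}\quad\forall\,x_{1},x_{2}\in B_{r_{0}}(0).
\end{equation*}
Put $K:=2M$ and $\delta:=r_{0}/(2K)$.

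\textbf{Step 3 (iteration).} Fix $z\in B_{\delta}(\zeta_{0})$ and set $x_{0}=0$. Given $x_{n}$, choose $e_{n}$ by Step 1 with
\begin{equation*}
\Lambda e_{n}=z-\mathcal{F}(x_{n}),\qquad \Vert e_{n}\Vert_{\mathcal{E}}\leq M\Vert z-\mathcal{F}(x_{n})\Vert_{\mathcal{Z}},
\end{equation*}
and set $x_{n+1}=x_{n}+e_{n}$. Since $z-\mathcal{F}(x_{n+1})=-[\mathcal{F}(x_{n+1})-\mathcal{F}(x_{n})-\Lambda e_{n}]$, Step 2 yields
\begin{equation*}
\Vert z-\mathcal{F}(x_{n+1})\Vert_{\mathcal{Z}}\leq \tfrac{1}{2M}\Vert e_{n}\Vert_{\mathcal{E}}\leq\tfrac12\Vert z-\mathcal{F}(x_{n})\Vert_{\mathcal{Z}},
\end{equation*}
provided $x_{n},x_{n+1}\in B_{r_{0}}(0)$. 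By induction the residuals satisfy $\Vert z-\mathcal{F}(x_{n})\Vert_{\mathcal{Z}}\leq 2^{-n}\Vert z-\zeta_{0}\Vert_{\mathcal{Z}}$, hence $\Vert e_{n}\Vert_{\mathcal{E}}\leq M2^{-n}\Vert z-\zeta_{0}\Vert_{\mathcal{Z}}$. Consequently
\begin{equation*}
\Vert x_{n}\Vert_{\mathcal{E}}\leq \sum_{k\geq0}\Vert e_{k}\Vert_{\mathcal{E}}\leq 2M\Vert z-\zeta_{0}\Vert_{\mathcal{Z}}<K\delta=r_{0}/2,
\end{equation*}
which keeps the induction inside the ball.

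\textbf{Step 4 (conclusion).} The sequence $(x_{n})$ is Cauchy in the Banach space $\mathcal{E}$, so it converges to some $x$ with $\Vert x\Vert_{\mathcal{E}}\leq K\Vert z-\mathcal{F}(0)\Vert_{\mathcal{Z}}<r_{0}\leq r$. By continuity of $\mathcal{F}$ and the vanishing of the residuals, $\mathcal{F}(x)=z$. Define $W(z):=x$; the choices of the $e_{n}$ can be made for each $z$ using the axiom of choice. This gives all three assertions of the theorem.

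The main point of care is Step 3. The induction must keep every iterate, and the whole segment joining consecutive iterates, inside the ball where the derivative estimate of Step 2 is valid. This is exactly why $\delta$ is taken proportional to $r_{0}/K$ and why the geometric decay of the residuals has to be established simultaneously with the bound on $\Vert x_{n}\Vert_{\mathcal{E}}$. The remaining steps are standard.
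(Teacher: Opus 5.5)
Your proposal is correct and complete. The open mapping theorem gives a norm-bounded, generally nonlinear right inverse of $\mathcal{F}'(0)$, the mean value inequality controls $\mathcal{F}-\mathcal{F}'(0)$ on $B_{r_0}(0)$, and your simultaneous induction on the residuals and on $\Vert x_n\Vert_{\mathcal{E}}<r_0/2$ keeps the modified Newton iteration inside the ball, which yields $W(z)\in B_r(0)$, $\mathcal{F}(W(z))=z$ and $\Vert W(z)\Vert_{\mathcal{E}}\le 2M\Vert z-\mathcal{F}(0)\Vert_{\mathcal{Z}}$. The paper gives no proof of its own and simply imports the result from \cite{Alekseev}; your argument is the classical Graves--Lyusternik proof of that statement, so there is no different route in the paper to compare against.
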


Thus, we will prove that we can apply this Theorem \ref{Liusternik} to the mapping $\mathcal{F}$ in (\ref{Mapa F})-(\ref{Mapas Fi}), through the following three lemmas:
\begin{lem}\label{F bem definido}
    Let $\mathcal{F}: \mathcal{E}_{N}\rightarrow  \mathcal{Z}_{N}$ be given by (\ref{Mapa F})-(\ref{Mapas Fi}). Then, $\mathcal{F}$ is well defined and continuous around the origin. 
\end{lem}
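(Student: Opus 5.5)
The plan is to split $\mathcal{F}$ into a part that is linear and bounded by the very definition of $\mathcal{E}_{N}$, and a part built from bounded multilinear expressions controlled by \eqref{conseq. da prop}. For $\mathcal{F}_{2}$ and $\mathcal{F}_{4}$ there is nothing to prove: $\Vert y(.,0)\Vert_{V}\le C\Vert(y,P,\theta,v,v_{0})\Vert_{\mathcal{E}_{N}}$ by \eqref{des aplicados em 0}, and $\Vert\theta(.,0)\Vert_{W^{1,3/2}_{0}(\Omega)}$ is part of the norm. For the first and third components I will write
\begin{equation*}
\mathcal{F}_{1}=F_{1}-\nu_{1}\Vert\nabla y\Vert^{2}\Delta y+(y\cdot\nabla)y,\qquad
\mathcal{F}_{3}=F_{2}-\nu_{1}\Vert\nabla y\Vert^{2}\Delta\theta+y\cdot\nabla\theta-\nu(\nabla y)Dy:\nabla y .
\end{equation*}
Here $\rho_{3}F_{1}\in L^{2}(Q)^{N}$ and $\rho_{3}F_{2}\in L^{2}(0,T;L^{3/2}(\Omega))$ hold by definition of $\mathcal{E}_{N}$. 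It therefore suffices to bound $\rho_{3}$ times each nonlinear term in the appropriate space by a polynomial in $\Vert(y,P,\theta,v,v_{0})\Vert_{\mathcal{E}_{N}}$.

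The weight bookkeeping rests on \eqref{des pesos}, which gives $\rho_{3}\le C\mu_{2}^{2}$. I will also note that \eqref{des.16>15} implies $8\hat\alpha-7\alpha^{\ast}>0$, so $\mu_{2}$ is bounded from below by a positive constant. Hence $\rho_{3}\le C\mu_{2}^{k}$ for every $k\ge 2$, and I can put one factor $\mu_{2}$ on each factor of a product. The bounds I will use are those in \eqref{conseq. da prop}: $\mu_{2}y\in L^{\infty}(0,T;V)\hookrightarrow L^{\infty}(0,T;L^{6})$, $\mu_{2}\nabla y\in L^{2}(0,T;H^{1})\hookrightarrow L^{2}(0,T;L^{6})$, $\mu_{2}\Delta y\in L^{2}(Q)$, $\mu_{2}\Delta\theta\in L^{2}(0,T;L^{3/2})$, and $\mu_{2}\nabla\theta\in L^{2}(0,T;W^{1,3/2})\hookrightarrow L^{2}(0,T;L^{3})$ for $N\le 3$.

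The individual terms are then handled as follows.
\begin{itemize}
\item $\Vert\rho_{3}(y\cdot\nabla)y\Vert_{L^{2}(Q)}\le C\Vert\mu_{2}y\Vert_{L^{\infty}L^{6}}\Vert\mu_{2}\nabla y\Vert_{L^{2}L^{3}}$.
\item $\Vert\rho_{3}\Vert\nabla y\Vert^{2}\Delta y\Vert_{L^{2}(Q)}\le C\Vert\mu_{2}\nabla y\Vert^{2}_{L^{\infty}L^{2}}\Vert\mu_{2}\Delta y\Vert_{L^{2}(Q)}$, using $\rho_{3}\le C\mu_{2}^{3}$.
\item $\rho_{3}\Vert\nabla y\Vert^{2}\Delta\theta$ is bounded in $L^{2}(0,T;L^{3/2})$ in the same way.
\item $\Vert\rho_{3}\,y\cdot\nabla\theta\Vert_{L^{2}L^{3/2}}\le C\Vert\mu_{2}y\Vert_{L^{\infty}L^{6}}\Vert\mu_{2}\nabla\theta\Vert_{L^{2}L^{2}}$, by H\"older with $\tfrac{1}{6}+\tfrac{1}{2}=\tfrac{2}{3}$.
\item For the dissipation term I use $\big\Vert\,|\nabla y|^{2}\big\Vert_{L^{3/2}}=\Vert\nabla y\Vert^{2}_{L^{3}}\le \Vert\nabla y\Vert_{L^{2}}\Vert\nabla y\Vert_{L^{6}}$. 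This gives
\begin{equation*}
\Vert\rho_{3}\,\nu(\nabla y)Dy:\nabla y\Vert_{L^{2}L^{3/2}}\le C\big(1+\Vert\mu_{2}\nabla y\Vert^{2}_{L^{\infty}L^{2}}\big)\Vert\mu_{2}\nabla y\Vert_{L^{\infty}L^{2}}\Vert\mu_{2}\nabla y\Vert_{L^{2}L^{6}} ,
\end{equation*}
where again one factor $\mu_{2}$ is placed on each factor.
\end{itemize}
Together with \eqref{conseq. da prop}, these bounds show that $\mathcal{F}$ maps $\mathcal{E}_{N}$ into $\mathcal{Z}_{N}$.

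For continuity, every nonlinear term is the restriction to the diagonal of a bounded bilinear or quadrilinear form on $\mathcal{E}_{N}$. Indeed, the same estimates apply verbatim to the polarized expressions, for instance $(y^{1}\cdot\nabla)y^{2}$ and $(\nabla y^{1},\nabla y^{2})\Delta y^{3}$. Bounded multilinear maps are continuous (and even $C^{\infty}$), so $\mathcal{F}$ is continuous, and in fact locally Lipschitz, around the origin.

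I expect the main obstacle to be the weight bookkeeping: the polynomial factors $\hat\xi^{-m}$ differ between $\rho_{3}$ and the powers of $\mu_{2}$, and the distribution of weights among the factors must be consistent with \eqref{des pesos}. A secondary point is the $L^{3/2}$ integrability of the quadratic dissipation term, for which the interpolation $\Vert\nabla y\Vert_{L^{3}}^{2}\le\Vert\nabla y\Vert_{L^{2}}\Vert\nabla y\Vert_{L^{6}}$ above is the key step.
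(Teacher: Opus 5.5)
Your proposal is correct and follows essentially the same route as the paper: you split off $\rho_{3}F_{1}$ and $\rho_{3}F_{2}$, which the $\mathcal{E}_{N}$-norm controls by definition, and you bound each remaining nonlinear term using $\rho_{3}\le C\mu_{2}^{2}$, $\mu_{2}^{-1}\le C$ and \eqref{conseq. da prop}. The differences are minor: for $(y\cdot\nabla)y$ you use the H\"older pairing $L^{6}\times L^{3}$ where the paper uses $H^{2}(\Omega)\hookrightarrow L^{\infty}(\Omega)$; your interpolation $\Vert\nabla y\Vert_{L^{3}}^{2}\le\Vert\nabla y\Vert\,\Vert\nabla y\Vert_{L^{6}}$ plays the role of the paper's embedding \eqref{imersao cap}; and your multilinear-form argument justifies continuity (in fact smoothness) more cleanly than the paper's ``done in a similar way''.
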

\begin{proof}
We will do the proof for the $N=3$ case, the $N=2$ case is similar. 

We want to show that $\mathcal{F}(y,P,\theta,v,v_{0})$ belongs to $ \mathcal{Z}_{3}$, for every $(y,P,\theta,v,v_{0})\in  \mathcal{E}_{3}$. To do this, we will show that each $\mathcal{F}_{i}(y,P,\theta,v,v_{0})$, with $i=\lbrace 1, 2, 3, 4\rbrace$, defined in \eqref{Mapas Fi} belongs to its respective space. Note that, 
\begin{equation*}
    \begin{array}{l}
      \Vert\mathcal{F}_{1}(y,P,\theta,v,v_{0})\Vert^{2}_{L^{2}(\rho^{2}_{3};Q)^{3}} \leq  3\Vert \rho_{3}(\mathcal{L}_{1}y + \nabla P - \nu_{0}\theta e_{N} - v\tilde{1}_{\omega})\Vert^{2}_{L^{2}(Q)^{3}}\vspace{0.1cm} \\
        \hspace{4.3cm}  
      +\, 3\Vert \rho_{3}(y.\nabla )y \Vert^{2}_{L^{2}(Q)^{3}}+\, 3\Vert\rho_{3}\nu_{1}\Vert\nabla y\Vert^{2}\Delta y\Vert^{2}_{L^{2}(Q)^{3}}.  
    \end{array}
\end{equation*}
By the definition of $ \mathcal{E}_{3}$ we have that
\begin{equation}\label{des primeiro termo de F1}
    \Vert \rho_{3}(\mathcal{L}_{1}y + \nabla P - \nu_{0}\theta e_{N} - v\tilde{1}_{\omega})\Vert^{2}_{L^{2}(Q)^{3}}=\Vert\rho_{3}F_{1}\Vert^{2}_{L^{2}(Q)^{3}}\leq C\,\Vert (y,P,\theta,v,v_{0})\Vert^{2}_{ \mathcal{E}_{3}}.
\end{equation}
 Also, from (\ref{des pesos}), in view of (\ref{conseq. da prop}) and by continuous immersion $H^{2}(\Omega)\hookrightarrow L^{\infty}(\Omega)$ we have
\begin{equation}\label{des com y e nabla y}
\begin{array}{l}
\Vert \rho_{3}(y.\nabla )y \Vert^{2}_{L^{2}(Q)^{3}} \leq C\,\displaystyle\iint\limits_{Q}\mu_{2}^{4}\vert y\vert^{2}\vert\nabla y\vert^{2}dx\,dt\vspace{0.1cm}\\
\leq C\left(\displaystyle\sup_{[0,T]}\displaystyle\int_{\Omega} \mu_{2}^{2}\vert\nabla y\vert^{2}\,dx\right)\left(\displaystyle\int_{0}^{T}\mu_{2}^{2}\Vert y\Vert^{2}_{L^{\infty}(\Omega)^{3}}dt\right)\vspace{0.1cm}\\
\leq C\,\Vert \mu_{2} y\Vert^{2}_{L^{\infty}(0,T;V)}\Vert \mu_{2} y\Vert^{2}_{L^{2}(0,T;D(A))} 
\leq  C\,\Vert (y,P,\theta,v,v_{0})\Vert^{4}_{ \mathcal{E}_{3}}.
\end{array}
\end{equation}    
And, since $\mu_{2}^{-1}\leq C$,
\begin{equation}\label{des com nabla e delta}
    \begin{array}{l}
\Vert\rho_{3}\nu_{1}\Vert\nabla y\Vert^{2}\Delta y\Vert^{2}_{L^{2}(Q)^{3}}  \leq C\,\displaystyle\int_{0}^{T} \mu_{2}^{-2}\mu_{2}^{4}\Vert \nabla y\Vert^{4}\displaystyle\int_{\Omega}\vert\mu_{2}\Delta y\vert^{2}dx\,dt  \vspace{0.1cm} \\
       \leq  C\,\Vert\mu_{2}y\Vert^{4}_{L^{\infty}(0,T;V)} \Vert \mu_{2} y\Vert^{2}_{L^{2}(0,T;D(A))}
      \leq C\,  \Vert (y,P,\theta,v,v_{0})\Vert^{6}_{ \mathcal{E}_{3}}. 
    \end{array}
\end{equation}

Therefore, by \eqref{des primeiro termo de F1}, \eqref{des com y e nabla y} and \eqref{des com nabla e delta} we get $\mathcal{F}_{1}(y,P,\theta,v,v_{0})\in L^{2}(\rho_{3}^{2};Q)^{3}$. From the inequality (\ref{des aplicados em 0}) it follows that $\mathcal{F}_{2}(y,P,\theta,v,v_{0})\in V$.

Now, for $\mathcal{F}_{3}(y,P,\theta,v,v_{0})$ note the following:
\begin{equation}\label{F3 limitado}
\begin{array}{l}
    \Vert\mathcal{F}_{3}(y,P,\theta,v,v_{0})\Vert^{2}_{L^{2}(\rho_{3}^{2}(0,T);L^{3/2}(\Omega))} \leq  C\, \left( \Vert\rho_{3}(\mathcal{L}_{2}\theta-v_{0}\tilde{1}_{\omega})\Vert^{2}_{L^{2}(0,T;L^{3/2}(\Omega))}\right.\vspace{0.1cm} \\
    \,\,+\,\Vert\rho_{3}y\cdot\nabla\theta\Vert^{2}_{L^{2}(0,T;L^{3/2}(\Omega))} + \Vert\rho_{3}\nu_{1}\Vert\nabla y\Vert^{2}\Delta\theta\Vert^{2}_{L^{2}(0,T;L^{3/2}(\Omega))}\vspace{0.1cm}\\

    \left.\,\, + \, \Vert\rho_{3}(\nu_{0}+\nu_{1}\Vert\nabla y\Vert^{2})Dy:\nabla y\Vert^{2}_{L^{2}(0,T;L^{3/2}(\Omega))}\right) = C\sum_{s=1}^{4}X_{s}.
    \end{array}
\end{equation}
Let's analyze each $X_{s}, s=\lbrace 1, 2, 3, 4\rbrace$:
\begin{equation*}
\begin{array}{l}
\hspace{-4.2cm}\bullet\,\, X_{1}=\Vert\rho_{3}F_{2}\Vert^{2}_{L^{2}(0,T;L^{3/2}(\Omega))}\leq C\,\Vert(y,P,\theta,v,v_{0})\Vert^{2}_{\mathcal{E}_{3}};
\end{array}
\end{equation*}
Using the continuous embedding $W^{1,3/2}(\Omega)\hookrightarrow L^{3}(\Omega)$,
\begin{equation*}
\begin{array}{l}
\bullet\,\, X_{2}\hspace{0.1cm}\leq \displaystyle\int_{0}^{T}\rho_{3}^{2}\left[\left(\displaystyle\int_{\Omega}\vert y\vert^{3}\,dx\right)^{1/2}\left( \displaystyle\int_{\Omega}\vert \nabla\theta\vert^{3}\,dx\right)^{1/2}\,  \right]^{4/3}dt\vspace{0.1cm}\\

= \displaystyle\int^{T}_{0}\rho_{3}^{2}\Vert y\Vert^{2}_{L^{3}(\Omega)^{3}}\Vert\nabla\theta\Vert^{2}_{L^{3}(\Omega)}\,dt

\leq C\displaystyle\int^{T}_{0}\mu_{2}^{4}\Vert \nabla y\Vert^{2}\Vert\nabla \theta\Vert^{2}_{W^{1,3/2}(\Omega)}\,dt\vspace{0.1cm}\\

\leq C\displaystyle\int^{T}_{0}\mu_{2}^{4}\Vert \nabla y\Vert^{2}\Vert\theta\Vert^{2}_{W^{2,3/2}(\Omega)}\,dt

\leq C\Vert \mu_{2 }y\Vert^{2}_{L^{\infty}(0,T;V)}\Vert\mu_{2}\theta\Vert^{2}_{L^{2}(0,T;W^{2,3/2}(\Omega))}
\vspace{0.1cm}\\
\leq C\, \Vert (y,P,\theta,v,v_{0})\Vert^{4}_{ \mathcal{E}_{3}};
\end{array}
\end{equation*}
\begin{equation*}
\hspace{-1.5cm}   \begin{array}{l}
 \bullet\,\, X_{3} \leq  C\displaystyle\int_{0}^{T}\left( \displaystyle\int_{\Omega}\mu_{2}^{3}\Vert\nabla y\Vert^{3}\vert\Delta\theta\vert^{3/2}dx \right)^{4/3}dt\vspace{0.1cm}\\
 
\leq  C \displaystyle\int_{0}^{T}\mu_{2}^{4}\mu_{2}^{-2}\Vert\nabla y\Vert^{4}\left(\displaystyle\int_{\Omega}\mu_{2}^{3/2}\vert\Delta\theta\vert^{3/2}dx \right)^{4/3}dt  \vspace{0.2cm}\\
\leq  C\left(\displaystyle\sup_{[0,T]}\displaystyle\int_{\Omega}\mu_{2}^{2}\vert\nabla y\vert^{2}\,dx\right)^{2}\displaystyle\int_{0}^{T}\left(\displaystyle\int_{\Omega}\mu_{2}^{3/2}\vert\Delta\theta\vert^{3/2}\,dx\right)^{4/3}dt\vspace{0.1cm}\\

\leq  C\, \Vert\mu_{2}y\Vert^{4}_{L^{\infty}(0,T;V)}\Vert\mu_{2}\theta\Vert^{2}_{L^{2}(0,T;W^{2,3/2}(\Omega))}
\leq C\,\Vert(y,P,\theta,v,v_{0})\Vert^{6}_{ \mathcal{E}_{3}};
    \end{array}
\end{equation*}
\begin{equation*}
\hspace{0.2cm}    \begin{array}{l}
\bullet\,\, X_{4}\leq C\,\displaystyle\int_{0}^{T}\left(\displaystyle\int_{\Omega}\mu_{2}^{3}(\nu_{0}+\nu_{1}\Vert\nabla y\Vert^{2})^{3/2}\vert\nabla y\vert^{3}\,dx\right)^{4/3}dt\vspace{0.1cm} \\
\leq C\displaystyle\int_{0}^{T}(\nu_{0}+\nu_{1}\Vert\nabla y\Vert^{2})^{2}\left(\displaystyle\int_{\Omega}\mu_{2}^{3}\vert\nabla y\vert^{3}\,dx\right)^{4/3}dt\vspace{0.1cm}\\
\leq C\, \displaystyle\int_{0}^{T}\left(\displaystyle\int_{\Omega}\mu_{2}^{3}\vert\nabla y\vert^{3}\,dx\right)^{4/3}dt + C\displaystyle\int_{0}^{T}\mu_{2}^{-4}\mu_{2}^{4}\Vert\nabla y\Vert^{4}\left(\displaystyle\int_{\Omega}\mu_{2}^{3}\vert\nabla y\vert^{3}\,dx \right)^{4/3}dt\vspace{0.1cm}\\
    = C (\tilde{K}_{1}+\tilde{K}_{2}). 
    \end{array}
\end{equation*}
First let's analyze $\tilde{K}_{1}$. Since $\nabla(\mu_{2}y)$ belongs to $L^{\infty}(0,T;L^{2}(\Omega)^{3\times 3})\cap L^{2}(0,T;H^{1}(\Omega)^{3\times 3})$ then using the Lemma 6.7 from \cite{Lions69}, which ensures continuous embedding 
\begin{equation}\label{imersao cap}
L^{\infty}(0,T;L^{2}(\Omega)^{3})\cap L^{2}(0,T;H^{1}(\Omega)^{3})\hookrightarrow L^{4}(0,T;L^{3}(\Omega)^{3}),
\end{equation}
we have $\nabla(\mu_{2}y)\in L^{4}(0,T;L^{3}(\Omega)^{3\times 3})$. Even more, $$\Vert \nabla (\mu_{2}y)\Vert_{L^{4}(0,T;L^{3}(\Omega)^{3})}\leq C\Vert\nabla (\mu_{2}y)\Vert_{L^{\infty}(0,T;L^{2}(\Omega)^{3})}^{1/2}\Vert\nabla (\mu_{2} y)\Vert_{L^{2}(0,T;H^{1}(\Omega)^{3})}^{1/2}.$$ 
Thereat,
\begin{equation*}
    \begin{array}{ll}
    \tilde{K}_{1} \leq   C\,\Vert\nabla (\mu_{2}y)\Vert_{L^{\infty}(0,T;L^{2}(\Omega)^{3})}^{2}\Vert\nabla (\mu_{2} y)\Vert_{L^{2}(0,T;H^{1}(\Omega)^{3})}^{2}.\vspace{0.1cm}\\

      \leq  C\,\Vert\mu_{2}y\Vert_{L^{\infty}(0,T;V)}^{2}\Vert\mu_{2} y\Vert_{L^{2}(0,T;D(A))}^{2} 
      \leq C\Vert (y,P,\theta,v,v_{0})\Vert^{4}_{ \mathcal{E}_{3}}.
    \end{array}
\end{equation*}
And again using the fact that $\mu_{2}^{-1}\leq C$, we get
\begin{equation*}
    \begin{array}{lll}
        \tilde{K}_{2}&\leq & C\left(\displaystyle\sup_{[0,T]}\displaystyle\int_{\Omega}\mu_{2}^{2}\vert\nabla y\vert^{2}\,dx\right)^{2}\displaystyle\int_{0}^{T}\left(\displaystyle\int_{\Omega}\mu_{2}^{3}\vert\nabla y\vert^{3}\,dx\right)^{4/3}dt 
        \leq  C\,\Vert (y,P,\theta,v,v_{0})\Vert^{8}_{ \mathcal{E}_{3}}.
    \end{array}
\end{equation*}
Thus, 
\begin{equation}\label{X4}
\begin{array}{l}
\bullet \, X_{4} \leq C(1+\Vert (y,P,\theta,v,v_{0})\Vert^{4}_{ \mathcal{E}_{3}})\Vert (y,P,\theta,v,v_{0})\Vert^{4}_{ \mathcal{E}_{3}}
\end{array}
\end{equation}
and consequently, from what we saw for each $X_{s}, s=\lbrace 1, 2, 3, 4\rbrace$, we obtain by \eqref{F3 limitado} that
\[
\mathcal{F}_{3}(y,P,\theta,v,v_{0})\in L^{2}(\rho_{3}^{2}(0,T);L^{3/2}(\Omega)).
\]

Finally, without difficulties, we can see that $\mathcal{F}_{4}(y,P,\theta,v,v_{0})\in W^{1,3/2}_{0}(\Omega)$. This prove that $\mathcal{F}$ is well defined.

The check that $\mathcal{F}$ is continuous around the origin is done in a similar way. With this, we have the proof of the lemma.
\hfill
\end{proof}

\begin{lem}\label{DF continuo}
    The mapping $\mathcal{F}: \mathcal{E}_{N}\longrightarrow  \mathcal{Z}_{N}$ is continuously differentiable.
\end{lem}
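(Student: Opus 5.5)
The plan is to decompose $\mathcal{F}$ into a bounded linear part plus finitely many bounded multilinear maps on $\mathcal{E}_{N}$, since such maps are automatically of class $C^{1}$. Write $\mathcal{F}=\mathcal{F}^{\ell}+\mathcal{F}^{n\ell}$, where
\[
\mathcal{F}^{\ell}(y,P,\theta,v,v_{0})=(F_{1},\,y(\cdot,0),\,F_{2}-\nu_{0}Dy:\nabla y,\,\theta(\cdot,0)).
\]
Here $F_{1},F_{2}$ are as in the definition of $\mathcal{E}_{N}$, and the term $-\nu_{0}Dy:\nabla y$ is actually quadratic, so I will move it into the nonlinear part. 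The linear part $(F_{1},y(\cdot,0),F_{2},\theta(\cdot,0))$ is bounded by the very definition of $\|\cdot\|_{\mathcal{E}_{N}}$ together with \eqref{des aplicados em 0}. It is therefore $C^{\infty}$.

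The nonlinear part consists of the following maps:
\begin{itemize}
\item the bilinear maps $B_{1}(y,\tilde y)=(y\cdot\nabla)\tilde y$, $B_{2}(y,\tilde\theta)=y\cdot\nabla\tilde\theta$ and $B_{3}(y,\tilde y)=\nu_{0}Dy:\nabla\tilde y$;
\item the trilinear maps $T_{1}(y,\hat y,\tilde y)=\nu_{1}(\nabla y,\nabla\hat y)\Delta\tilde y$ and $T_{2}(y,\hat y,\tilde\theta)=\nu_{1}(\nabla y,\nabla\hat y)\Delta\tilde\theta$;
\item the quadrilinear map $Q(y,\hat y,\bar y,\tilde y)=\nu_{1}(\nabla y,\nabla\hat y)D\bar y:\nabla\tilde y$.
\end{itemize}
Each of these is composed with the projection $\mathcal{E}_{N}\to$ (velocity or temperature component). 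It then suffices to show that each of them is continuous, i.e.\ bounded, as a multilinear map into the appropriate factor of $\mathcal{Z}_{N}$: $L^{2}(\rho_{3}^{2};Q)^{N}$ for $B_{1},T_{1}$, and $L^{2}(\rho_{3}^{2}(0,T);L^{3/2}(\Omega))$ for $B_{2},B_{3},T_{2},Q$. Then $\mathcal{F}'(y,\ldots)$ is given explicitly by the sum of the partial linearizations, for instance
\[
D T_{1}[\cdot](h)=2\nu_{1}(\nabla y,\nabla h)\Delta y+\nu_{1}\|\nabla y\|^{2}\Delta h,
\]
and the continuity of $(y,\ldots)\mapsto\mathcal{F}'(y,\ldots)$ in operator norm follows from the multilinear bounds.

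The boundedness estimates are exactly the polarized versions of those in Lemma \ref{F bem definido}. For $B_{1}$, I will use $\rho_{3}\le C\mu_{2}^{2}$ from \eqref{des pesos}, then split $\rho_{3}\le C\mu_{2}\cdot\mu_{2}$. This gives
\[
\|\rho_{3}(y\cdot\nabla)\tilde y\|_{L^{2}(Q)}\le C\|\mu_{2}y\|_{L^{2}(0,T;D(A))}\|\mu_{2}\tilde y\|_{L^{\infty}(0,T;V)},
\]
which is controlled by \eqref{conseq. da prop}. For $T_{1}$ and $T_{2}$, I distribute three weight factors $\mu_{2}$ and absorb the surplus with $\mu_{2}^{-1}\le C$. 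This yields bounds by $\|\mu_{2}y\|_{L^\infty V}\|\mu_{2}\hat y\|_{L^\infty V}$ times $\|\mu_{2}\tilde y\|_{L^{2}D(A)}$ or $\|\mu_{2}\tilde\theta\|_{L^{2}W^{2,3/2}}$. For $B_{2}$, I use $W^{1,3/2}\hookrightarrow L^{3}$ and $H^{1}\hookrightarrow L^{3}$, as in the estimate of $X_{2}$.

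I expect the main obstacle to be the terms $B_{3}$ and $Q$, which need $\nabla y\in L^{4}(0,T;L^{3})$ with weights. For these I will apply Hölder in the form $\|D\bar y:\nabla\tilde y\|_{L^{3/2}}\le\|\nabla\bar y\|_{L^{3}}\|\nabla\tilde y\|_{L^{3}}$, followed by Cauchy--Schwarz in time. Then I use the interpolation embedding \eqref{imersao cap}, applied to $\nabla(\mu_{2}\bar y)$ and $\nabla(\mu_{2}\tilde y)$, so that
\[
\|\rho_{3}B_{3}\|_{L^{2}(L^{3/2})}\le C\|\nabla(\mu_{2}y)\|_{L^{4}(L^{3})}\|\nabla(\mu_{2}\tilde y)\|_{L^{4}(L^{3})}\le C\|y\|_{\mathcal{E}_{N}}\|\tilde y\|_{\mathcal{E}_{N}}.
\]
The $L^{\infty}(V)$ factors of $Q$ are handled using $\mu_{2}^{-1}\le C$ as in $\tilde K_{2}$. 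The case $N=2$ is identical, since there the embeddings are only stronger.
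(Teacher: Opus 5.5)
Your proof is correct, but it is organized differently from the paper's. The paper works by hand. It writes down the candidate derivative $\mathcal{DF}$ in \eqref{Df i} and checks that the difference quotients converge to it in $\mathcal{Z}_3$ (the terms $H_1,H_2,H_3$ and $I_1,\dots,I_7$). It then proves continuity of $(y,P,\theta,v,v_0)\mapsto\mathcal{F}'(y,P,\theta,v,v_0)$ in operator norm through the estimates $K_1,\dots,K_{16}$, and concludes with the standard fact that a G\^ateaux derivative which is continuous in operator norm is a Fr\'echet derivative.

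You instead use that here $\nu(\nabla y)=\nu_0+\nu_1(\nabla y,\nabla y)$ is a bounded quadratic form. Hence $\mathcal{F}$ is a polynomial map: a bounded linear part plus diagonal restrictions of bounded bi-, tri- and quadrilinear maps. Smoothness is then automatic once each multilinear map is bounded. Those bounds are the polarized versions of \eqref{des com y e nabla y}, \eqref{des com nabla e delta} and the estimates of $X_2,X_3,X_4$. Your weight bookkeeping is sound: $\rho_3\le C\mu_2^2$, $\mu_2^{-1}\le C$, H\"older into $L^{3/2}$, Cauchy--Schwarz in time, and \eqref{imersao cap}.

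One point you should make explicit: define the multilinear maps on $\mathcal{E}_N^k$, not on bare velocity fields. The bounds on $\|\mu_2 y\|_{L^\infty(0,T;V)}$ and $\|\mu_2 y\|_{L^2(0,T;D(A))}$ come from \eqref{conseq. da prop}, which needs the whole tuple $(y,P,\theta,v,v_0)$.

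Your route is shorter, avoids the difference-quotient limits, and gives $\mathcal{F}\in C^\infty$ with a locally Lipschitz derivative. The paper's route does not depend on the polynomial structure. That is why essentially the same template is reused for Lemma \ref{DF continuo extra}, where $\bar\nu(\nabla\varsigma)=\nu_0+\nu_1\|\nabla\varsigma\|^2_{L^p}$ with $3<p\le 6$ is not multilinear. There your decomposition would need a separate $C^1$ argument for $u\mapsto\|u\|^2_{L^p}$.
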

\begin{proof}
We will the proof for $N=3$ (the case $N=2$ is similar). Let us first prove that $\mathcal{F}$ is Gâteaux-differentiable at any $(y,P,\theta,v,v_{0})\in  \mathcal{E}_{3}$ and let us
compute the \textit{G-derivative} $\mathcal{F}^{\prime}(y,P,\theta,v,v_{0})$. 

Let us fix $(y,P,\theta,v,v_{0})\in  \mathcal{E}_{3}$ and let us take $(y^{\prime},P^{\prime},\theta^{\prime},v^{\prime},v_{0}^{\prime})\in  \mathcal{E}_{3}$ and $\sigma >0$. Also, by the decomposition made in \eqref{Mapas Fi}, we introduce the linear mapping $\mathcal{DF}: \mathcal{E}_{3}\longrightarrow  \mathcal{Z}_{3}$ with $\mathcal{DF}(y,P,\theta,v,v_{0})=\mathcal{DF}=(\mathcal{DF}_{1},\mathcal{DF}_{2},\mathcal{DF}_{3},\mathcal{DF}_{4})$ where
\begin{equation}\label{Df i}
 \left\{ \begin{array}{l}
 \mathcal{DF}_{1}(y^{\prime},P^{\prime},\theta^{\prime},v^{\prime},v_{0}^{\prime}):= y^{\prime}_{t}-\nu(\nabla y)\Delta y^{\prime} - 2\nu_{1}(\nabla y,\nabla y^{\prime})\Delta y + \nabla P^{\prime}\vspace{0.1cm}\\
 \hspace{3.5cm}+\, (y^{\prime}\cdot\nabla)y + (y\cdot\nabla)y^{\prime}-{\nu_{0}}\theta^{\prime}e_{3}-v^{\prime}\tilde{1}_{\omega},\vspace{0.15cm}  \\
  \mathcal{DF}_{2}(y^{\prime},P^{\prime},\theta^{\prime},v^{\prime},v_{0}^{\prime}):= y^{\prime}(.,0), \vspace{0.15cm}\\
  \mathcal{DF}_{3}(y^{\prime},P^{\prime},\theta^{\prime},v^{\prime},v_{0}^{\prime}):= \theta^{\prime}_{t}-\nu(\nabla y)\Delta \theta^{\prime} - 2\nu_{1}(\nabla y,\nabla y^{\prime})\Delta\theta \vspace{0.1cm} \\
\hspace{3.5cm} +\, y^{\prime}\cdot\nabla\theta + y\cdot\nabla\theta^{\prime}-v_{0}^{\prime}\tilde{1}_{\omega} -\nu(\nabla y) D y:\nabla y^{\prime}    \vspace{0.1cm}\\
  \hspace{3.5cm}-\,\left[\nu(\nabla y) D y^{\prime} + 2\nu_{1}(\nabla y,\nabla y^{\prime})D y \right]:\nabla y,\vspace{0.15cm}\\

  \mathcal{DF}_{4}(y^{\prime},P^{\prime},\theta^{\prime},v^{\prime},v_{0}^{\prime}):= \theta^{\prime}(.,0).
    \end{array}\right.
\end{equation}

From the definition of the spaces $\mathcal{E}_{3}, \mathcal{Z}_{3}$ and \eqref{Df i}, it
becomes clear that $\mathcal{DF}\in \mathcal{L}(\mathcal{E}_{3}, \mathcal{Z}_{3})$. Furthermore, for each $j=\lbrace 1, 2, 3, 4\rbrace$ we have
\begin{equation}\label{Converg de Fi}
\begin{array}{l}
    \dfrac{1}{\sigma}[\mathcal{F}_{j}\left((y,P,\theta,v,v_{0})+\sigma(y^{\prime},P^{\prime},\theta^{\prime},v^{\prime},v_{0}^{\prime})\right) -\mathcal{F}_{j}(y,P,\theta,v,v_{0})]\,\vspace{0.1cm}\\
\hspace{-1.5cm}\text{converges to}\, \mathcal{DF}_{j}(y^{\prime},P^{\prime},\theta^{\prime},v^{\prime},v_{0}^{\prime})\, \text{strong in}\, \mathcal{Z}_{3},\, \text{as}\, \sigma\longrightarrow 0. 
\end{array}
\end{equation}

Let us show that \eqref{Converg de Fi} is true. Firstly, we state that,
\begin{equation}\label{Converg de F1}
\begin{array}{l}
    \dfrac{1}{\sigma}[\mathcal{F}_{1}\left((y,P,\theta,v,v_{0})+\sigma(y^{\prime},P^{\prime},\theta^{\prime},v^{\prime},v_{0}^{\prime})\right) -\mathcal{F}_{1}(y,P,\theta,v,v_{0})]\,\vspace{0.1cm}\\
\hspace{-1.5cm}\text{converges to}\, \mathcal{DF}_{1}(y^{\prime},P^{\prime},\theta^{\prime},v^{\prime},v_{0}^{\prime})\, \text{strong in}\, L^{2}(\rho_{3}^{2};Q)^{3},\, \text{as}\, \sigma\longrightarrow 0. 
\end{array}
\end{equation}
Indeed,
\begin{equation*}
\begin{array}{l}
    \Vert\dfrac{1}{\sigma}[\mathcal{F}_{1}\left((y,P,\theta,v,v_{0})+\sigma(y^{\prime},P^{\prime},\theta^{\prime},v^{\prime},v_{0}^{\prime})\right) -\mathcal{F}_{1}(y,P,\theta,v,v_{0})]\vspace{0.1cm}\\
    \,\,\,- \mathcal{DF}_{1}(y^{\prime},P^{\prime},\theta^{\prime},v^{\prime},v_{0}^{\prime})\Vert_{L^{2}(\rho_{3}^{2};Q)^{3}}\leq \sigma\Vert (y^{\prime}\cdot\nabla)y^{\prime}\Vert_{L^{2}(\rho_{3}^{2};Q)^{3}}\vspace{0.1cm}\\
\,\,\,+\,\Vert\dfrac{\nu_{1}}{\sigma}\left(\Vert\nabla (y + \sigma y^{\prime})\Vert^{2}-\Vert\nabla y\Vert^{2}\right)\Delta y - 2\nu_{1}(\nabla y,\nabla y^{\prime})\Delta y\Vert_{L^{2}(\rho_{3}^{2};Q)^{3}}\vspace{0.1cm}\\     
     \,\,\,+\,\Vert\nu_{1}(\Vert\nabla (y+\sigma y^{\prime})\Vert^{2}-\Vert\nabla y\Vert^{2})\Delta y^{\prime}\Vert_{L^{2}(\rho_{3}^{2};Q)^{3}}= H_{1}+H_{2}+H_{3}.
    \end{array}
\end{equation*}
Note that, as proved in the Lemma \ref{F bem definido},
\begin{equation*}
    H^{2}_{1}\leq C\sigma\Vert(y^{\prime},P^{\prime},\theta^{\prime},v^{\prime},v_{0}^{\prime})\Vert^{4}_{ \mathcal{E}_{3}}
\end{equation*}
and we see that, $H_{1}\longrightarrow 0$, as $\sigma\longrightarrow 0$.

Also, as $\sigma\longrightarrow 0$,
\begin{equation*}
\begin{array}{l}
H^{2}_{2}=\nu_{1}^{2}\displaystyle\iint\limits_{Q}\rho_{3}^{2}\vert \dfrac{1}{\sigma}\left(\Vert\nabla ( y + \sigma y^{\prime})\Vert^{2}-\Vert\nabla y\Vert^{2}\right)\Delta y - 2(\nabla y,\nabla y^{\prime})\Delta y\vert^{2}dxdt\longrightarrow 0 
\end{array}
\end{equation*}
and 
\begin{equation*}
H^{2}_{3}=\nu_{1}^{2}\displaystyle\iint\limits_{Q}\rho_{3}^{2}\vert\,\Vert\nabla (y+\sigma y^{\prime})\Vert^{2}-\Vert\nabla y\Vert^{2}\,\vert^{2}\vert\Delta y^{\prime}\vert^{2}dxdt \longrightarrow 0.
\end{equation*}
Thus, (\ref{Converg de F1}) holds.

Now,  that the difference quotient
\begin{equation}\label{Converg de F2 e F4}
    \dfrac{1}{\sigma}[\mathcal{F}_{j}\left((y,P,\theta,v,v_{0})+\sigma(y^{\prime},P^{\prime},\theta^{\prime},v^{\prime},v_{0}^{\prime})\right) -\mathcal{F}_{j}(y,P,\theta,v,v_{0})]
\end{equation}
converges to $\mathcal{DF}_{j}(y^{\prime},P^{\prime},\theta^{\prime},v^{\prime},v_{0}^{\prime})$ strong for $j=2$ and $j=4$, respectively, in $V$ and $W_{0}^{1,3/2}(\Omega)$,  as $\sigma\longrightarrow 0$, is immediate. 

Finally, let's see that
\begin{equation}\label{Converg de F3}
\begin{array}{l}
    \dfrac{1}{\sigma}[\mathcal{F}_{3}\left((y,P,\theta,v,v_{0})+\sigma(y^{\prime},P^{\prime},\theta^{\prime},v^{\prime},v_{0}^{\prime})\right) -\mathcal{F}_{3}(y,P,\theta,v,v_{0})]\vspace{0.1cm}\\

\hspace{-1.6cm}\, \text{converges to} \,\mathcal{DF}_{3}(y^{\prime},P^{\prime},\theta^{\prime},v^{\prime},v_{0}^{\prime})\, \text{strong in}\, L^{2}(\rho_{3}^{2}(0,T);L^{3/2}(\Omega)), \, \text{as}\, \sigma\longrightarrow 0.
\end{array}
\end{equation}

For simplicity, we omit the notation of inequality norms below but let it be clear that they are all norms in $L^{2}(\rho^{2}_{3}(0,T);L^{3/2}(\Omega))$. More precisely,
\begin{equation*}
    \begin{array}{l}
     \Vert  \dfrac{1}{\sigma}[\mathcal{F}_{3}\left((y,P,\theta,v,v_{0})+\sigma(y^{\prime},P^{\prime},\theta^{\prime},v^{\prime},v_{0}^{\prime})\right) -\mathcal{F}_{3}(y,P,\theta,v,v_{0})]\vspace{0.1cm} \\
     
\,\,\,  -\mathcal{DF}_{3}(y^{\prime},P^{\prime},\theta^{\prime},v^{\prime},v_{0}^{\prime})\Vert\leq \sigma\Vert y^{\prime}\cdot\nabla\theta^{\prime}\Vert\vspace{0.1cm}\\

\,\,\,+ \sigma\Vert(\nu_{0}+\nu_{1}\Vert\nabla(y+\sigma y^{\prime})\Vert^{2})Dy^{\prime}:\nabla y^{\prime}\Vert\vspace{0.1cm}\\

\,\,\, +\Vert\dfrac{\nu_{1}}{\sigma}\left( \Vert\nabla(y+\sigma y^{\prime})\Vert^{2}-\Vert\nabla y\Vert^{2}\right)\Delta\theta-2\nu_{1}(\nabla y,\nabla y^{\prime})\Delta\theta\Vert\vspace{0.1cm}\\

\,\,\, +\Vert\nu_{1}(\Vert\nabla(y+\sigma y^{\prime})\Vert^{2}-\Vert\nabla y\Vert^{2})\Delta\theta^{\prime}\Vert\vspace{0.1cm}\\

\,\,\, +\Vert\dfrac{\nu_{1}}{\sigma}\left( \Vert\nabla(y+\sigma y^{\prime})\Vert^{2}-\Vert\nabla y\Vert^{2}\right)Dy :\nabla y-2\nu_{1}(\nabla y,\nabla y^{\prime})D y:\nabla y\Vert\vspace{0.1cm}  \\

\,\,\, +\Vert\nu_{1}(\Vert\nabla(y+\sigma y^{\prime})\Vert^{2}-\Vert\nabla y\Vert^{2})Dy:\nabla y^{\prime}\Vert\vspace{0.1cm}\\

\,\,\,+\Vert\nu_{1}(\Vert\nabla(y+\sigma y^{\prime})\Vert^{2}-\Vert\nabla y\Vert^{2})Dy^{\prime}:\nabla y\Vert=\displaystyle\sum_{j=1}^{7}I_{j}.
\end{array}
\end{equation*}

By the same arguments from the proof of Lemma \ref{F bem definido} (see, $X_2$, $X_3$ and $X_4$) together with those used in \eqref{Converg de F1}, we have
\[
\displaystyle\sum_{j=1}^{5}I_{j}\longrightarrow 0,\,\,\text{as}\,\, \sigma\longrightarrow 0.
\]
Also, since
\begin{equation*}
\begin{array}{cc}
    I^{2}_{6}\leq
C\displaystyle\int_{0}^{T}\sigma^{2}\left(2\Vert\nabla y\Vert\Vert\nabla y^{\prime}\Vert+\sigma^{2}\Vert\nabla y^{\prime}\Vert^{2}\right)^{2}\mu_{2}^{4}\left[\,\displaystyle\int_{\Omega}\vert\nabla y\vert^{3/2}\vert\nabla y^{\prime}\vert^{3/2}dx \right]^{4/3}dt\vspace{0.1cm}\\
\leq C\,\sigma^{2}\left(\Vert \mu_{2}y\Vert^{2}_{L^{\infty}(0,T;V)}\Vert \mu_{1}y^{\prime}\Vert^{2}_{L^{2}(0,T;V)} + \sigma^{2}\Vert \mu_{2}y^{\prime}\Vert^{4}_{L^{\infty}(0,T;V)}\right)\vspace{0.1cm}\\
\displaystyle\int_{0}^{T}\mu_{2}^{4}\left[\,\left(\displaystyle\int_{\Omega}\vert\nabla y\vert^{3}dx\right)^{1/2}\left(\displaystyle\int_{\Omega}\vert\nabla y^{\prime}\vert^{3}dx\right)^{1/2}\right]^{4/3}dt\vspace{0.1cm}\\
= C\,\sigma^{2}\left(\Vert \mu_{2}y\Vert^{2}_{L^{\infty}(0,T;V)}\Vert \mu_{1}y^{\prime}\Vert^{2}_{L^{2}(0,T;V)} + \sigma^{2}\Vert \mu_{2}y^{\prime}\Vert^{4}_{L^{\infty}(0,T;V)}\right)\vspace{0.1cm}\\
\left(\Vert\nabla(\mu_{2}y)\Vert^{4}_{L^{4}(0,T;L^{3}(\Omega)^{3})}+\Vert\nabla(\mu_{2}y^{\prime})\Vert^{4}_{L^{4}(0,T;L^{3}(\Omega)^{3})}\right)
    \end{array}
\end{equation*}
and from continuous embedding (\ref{imersao cap}), we have that $I_{6}$ is bounded and therefore $I_{6}\longrightarrow 0$, as $\sigma\longrightarrow 0$. By the same arguments we also have $I_{7}\longrightarrow 0$, as $\sigma\longrightarrow 0$. Consequently, \eqref{Converg de F3} is hold.

Therefore, from \eqref{Converg de F1}, \eqref{Converg de F2 e F4} and \eqref{Converg de F3} we have \eqref{Converg de Fi} and $\mathcal{F}$ is G\^ateaux-differentiable at any $(y,P,\theta,v,v_{0})\in  \mathcal{E}_{3}$, with \textit{G-derivative} $\mathcal{F}^{\prime}(y,P,\theta,v,v_{0})={\mathcal{DF}}(y,P,\theta,v,v_{0})$.

Now, let us prove that $(y,P,\theta,v,v_{0})\longmapsto\mathcal{F}^{\prime}(y,P,\theta,v,v_{0})$ is a continuous mapping. Thus, we will prove that $\mathcal{F}$ is not only G\^ateaux-differentiable,
but also Fr\'echet-differentiable and $\mathcal{C}^{1}$. For that, suppose that $$(y_m, P_m,\theta_m, v_m,{v_{0}}_{m})\longrightarrow (y, P, \theta, v, v_{0})\, \text{in}\,  \mathcal{E}_{3}$$ and we will prove the existence of $\varepsilon_{m}(y,P,\theta,v,v_{0})$ such that
\begin{equation}\label{convergencia da derivada}
    \begin{array}{c}
\Vert\left(\mathcal{F}^{\prime}(y_m, P_m,\theta_m, v_m,{v_{0}}_{m})- \mathcal{F}^{\prime}(y, P,\theta, v,v_{0})\right)(y^{\prime},P^{\prime},\theta^{\prime},v^{\prime},v_{0}^{\prime})\Vert^{2}_{ \mathcal{Z}_{3}}\vspace{0.1cm}\\

\leq\varepsilon_{m}\Vert(y^{\prime},P^{\prime},\theta^{\prime},v^{\prime},v_{0}^{\prime})\Vert^{2}_{\mathcal{E}_{3}},
    \end{array}
\end{equation}
for all $(y^{\prime},P^{\prime},\theta^{\prime},v^{\prime},v_{0}^{\prime})\in  \mathcal{E}_{3}$ and $\displaystyle\lim_{m\to\infty}\varepsilon_{m}=0$.

In order to simplify the notation, we will consider
\begin{equation*}
\mathcal{D}_{j,m}:=\mathcal{F}_{j}^{\prime}(y_m, P_m,\theta_m, v_m,{v_{0}}_{m})-\mathcal{F}_{j}^{\prime}(y, p,\theta, v,v_{0}).
\end{equation*}
So, notice that
\begin{equation*}
\begin{array}{l}
\bullet\,\,\,  \Vert\mathcal{D}_{1,m}(y^{\prime},P^{\prime},\theta^{\prime},v^{\prime},v_{0}^{\prime})\Vert^{2}_{L^{2}(\rho_{3}^{2};Q)^{3}}
\leq 3\Vert\nu_{1}\left(\Vert\nabla y\Vert^{2}-\Vert\nabla y_{m}\Vert^{2} 
 \right)\Delta y^{\prime}\Vert^{2}_{L^{2}(\rho_{3}^{2};Q)^{3}}\vspace{0.1cm}\\
 +\, 3\Vert 2\nu_{1}(\nabla y,\nabla y^{\prime})\Delta y - 2\nu_{1}(\nabla y_{m},\nabla y^{\prime})\Delta y_{m}\Vert^{2}_{L^{2}(\rho_{3}^{2};Q)^{3}}\vspace{0.1cm}\\
 +\,3\Vert(y^{\prime}\cdot\nabla)(y_{m}-y)+((y_{m}-y)\cdot\nabla)
y^{\prime}\Vert^{2}_{L^{2}(\rho_{3}^{2};Q)^{3}}\vspace{0.1cm}\\
=3K_{1}+12K_{2}+3K_{3}.
 \end{array}
\end{equation*}
Since,
\begin{equation*}
    \begin{array}{c}
    K_{1}\leq C\left(\Vert\Vert\nabla (y-y_{m})\Vert \Vert\nabla y \Vert\Delta y^{\prime}\Vert^{2}_{L^{2}(\rho_{3}^{2};Q)^{3}} +
     \Vert\Vert\nabla (y-y_{m})\Vert\Vert\nabla y_{m}\Vert\Delta y^{\prime}\Vert^{2}_{L^{2}(\rho_{3}^{2};Q)^{3}}\right)
\end{array}
\end{equation*}
then, using the same arguments as \eqref{des com nabla e delta}, we conclude that
\begin{equation*}
    \begin{array}{c}
  K_{1}\leq \varepsilon_{1,m}\Vert(y^{\prime},P^{\prime},\theta^{\prime},v^{\prime},v_{0}^{\prime})\Vert^{2}_{ \mathcal{E}_{3}}     
    \end{array}
\end{equation*}
where
\begin{equation*}
\begin{array}{c}
\varepsilon_{1,m}= C\Vert(y_m, P_m,\theta_m, v_m,{v_{0}}_{m})-(y, P,\theta, v,{v_{0}})\Vert^{2}_{\mathcal{E}_{3}}\left( \Vert(y, P,\theta, v,{v_{0}})\Vert^{2}_{\mathcal{E}_{3}}\right.\vspace{0.1cm}\\
\left.+\Vert(y_m, P_m,\theta_m, v_m,{v_{0}}_{m})\Vert^{2}_{\mathcal{E}_{3}}\right).
\end{array}
\end{equation*}
For $K_{2}$ let’s first see the following, adding and subtracing $\nu_{1}(\nabla y_{m},\nabla y^{\prime})\Delta y$ in $K_{2}$, we have 
\begin{equation*}
    \begin{array}{l}
        K_{2}=\Vert-\nu_{1}(\nabla(y_{m}-y),\nabla y^{\prime})\Delta y - \nu_{1}(\nabla y_{m},\nabla y^{\prime})\Delta(y_{m}-y)\Vert^{2}_{L^{2}(\rho_{3}^{2};Q)^{3}}\vspace{0.1cm}\\
        \leq C\,\left(\Vert\nu_{1} (\nabla(y_{m}-y),\nabla y^{\prime})\Delta y\Vert^{2}_{L^{2}(\rho_{3}^{2};Q)^{3}}+\Vert\nu_{1}(\nabla y_{m},\nabla y^{\prime})\Delta(y_{m}-y)\Vert^{2}_{L^{2}(\rho_{3}^{2};Q)^{3}}\right)\vspace{0.1cm}\\
        \leq\varepsilon_{2,m}\Vert(y^{\prime},P^{\prime},\theta^{\prime},v^{\prime},v_{0}^{\prime})\Vert^{2}_{\mathcal{E}_{3}},
    \end{array}
\end{equation*}
where
\begin{equation*}
    \begin{array}{c}
    \varepsilon_{2,m}= C\Vert(y_m, P_m,\theta_m, v_m,{v_{0}}_{m})-(y, P,\theta, v,{v_{0}})\Vert^{2}_{\mathcal{E}_{3}}\left( \Vert(y, P,\theta, v,{v_{0}})\Vert^{2}_{\mathcal{E}_{3}}\right.\vspace{0.1cm}\\
\left.+\, \Vert(y_m, P_m,\theta_m, v_m,{v_{0}}_{m})\Vert^{2}_{\mathcal{E}_{3}}\right).
    \end{array}
\end{equation*}
And, by the same reasoning as \eqref{des com y e nabla y},
\begin{equation*}
    \begin{array}{lll}
      K_{3}& \leq& C\left(\Vert(y^{\prime}\cdot\nabla)(y_{m}-y)\Vert^{2}_{L^{2}(\rho_{3}^{2};Q)^{3}}  + \Vert((y_{m}-y)\cdot\nabla)
y^{\prime}\Vert^{2}_{L^{2}(\rho_{3}^{2};Q)^{3}}\right)   \vspace{0.1cm}\\

&\leq &\varepsilon_{3,m}\Vert(y^{\prime},P^{\prime},\theta^{\prime},v^{\prime},v_{0}^{\prime})\Vert^{2}_{\mathcal{E}_{3}},
    \end{array}
\end{equation*}
with
\begin{equation*}
\varepsilon_{3,m}=C\Vert(y_m, P_m,\theta_m, v_m,{v_{0}}_{m})-(y, P,\theta, v,{v_{0}})\Vert^{2}_{\mathcal{E}_{3}}.
\end{equation*}

It is easy to check that $\mathcal{D}_{j,m}$ for $j=2$ and $j=4$ satisfy similar inequalities.

Again, all inequality norms below are norms in $L^{2}(\rho_{3}^{2}(0,T);L^{3/2}(\Omega))$, we will omit them for simplicity. For $\mathcal{D}_{3,m}$ after some calculations we get the following:
\begin{equation*}
    \begin{array}{l}
\bullet\,\,\, \Vert\mathcal{D}_{3,m}(y^{\prime},P^{\prime},\theta^{\prime},v^{\prime},v_{0}^{\prime})\Vert^{2}\leq C\left[ \Vert\, \Vert\nabla(y_{m}-y)\Vert\,\Vert\nabla y\Vert\, \Delta\theta^{\prime}\,\Vert^{2}\right.\vspace{0.1cm}\\ \,+ \Vert\,\Vert\nabla(y_{m}-y)\Vert\,\Vert\nabla y_{m}\Vert\Delta\theta^{\prime}\,\Vert^{2} + \Vert\nu_{1}(\nabla y_{m},\nabla y^{\prime})\Delta(\theta_{m}-\theta)\Vert^{2}\vspace{0.1cm}\\
       \,+\Vert\nu_{1}(\nabla(y_{m}-y),\nabla y^{\prime})\Delta\theta\Vert^{2} + \Vert(\nu_{0}+\nu_{1}\Vert\nabla y_{m}\Vert^{2})D(y_{m}-y):\nabla y^{\prime}\Vert^{2}\vspace{0.1cm}\\
       \,+ \Vert\nu_{1}(\Vert\nabla y_{m}\Vert^{2}-\Vert\nabla y\Vert^{2})Dy:\nabla y^{\prime}\Vert^{2} + \Vert(\nu_{0}+\nu_{1}\Vert \nabla y_{m}\Vert^{2})D y^{\prime}:\nabla (y_{m}-y)\Vert^{2}\vspace{0.1cm}\\
       \,+ \Vert \nu_{1}(\Vert\nabla y_{m}\Vert^{2}-\Vert\nabla y\Vert^{2})D y^{\prime}:\nabla y \Vert^{2} +\Vert\nu_{1}(\nabla y_{m},\nabla y^{\prime})D y_{m}:\nabla (y_{m}-y)\Vert^{2} \vspace{0.1cm}\\
      \, + \Vert\nu_{1}(\nabla (y_{m}-y),\nabla y^{\prime})D y_{m}:\nabla y \Vert^{2} + \Vert \nu_{1}(\nabla y, \nabla y^{\prime})D(y_{m}-y):\nabla y\Vert^{2}\vspace{0.1cm}\\
      \,\left. +\Vert y^{\prime}\cdot\nabla (\theta_{m}-\theta)\Vert^{2}+\Vert(y_{m}-y)\cdot\nabla\theta^{\prime}\Vert^{2}\right]  = C\displaystyle\sum_{s=4}^{16}{K}_{s}. 
       \end{array}
\end{equation*}
Let's check some terms,
\begin{equation*}
    \begin{array}{l}
      K_{4}\leq C \displaystyle\int_{0}^{T}\left(\displaystyle\int_{\Omega}\mu_{2}^{3}\Vert\nabla(y_{m}-y)\Vert^{3/2}\Vert\nabla y\Vert^{3/2}\vert\Delta\theta^{\prime}\vert^{3/2}dx\right)^{4/3}dt \vspace{0.1cm}\\
      \leq C\Vert\mu_{2}(y_{m}-y)\Vert^{2}_{L^{\infty}(0,T;H^{1}(\Omega)^{3})}\Vert\mu_{2} y\Vert^{2}_{L^{\infty}(0,T;H^{1}(\Omega)^{3})}\Vert\mu_{2}\theta^{\prime}\Vert^{2}_{L^{2}(0,T;W^{2,3/2}(\Omega))}\vspace{0.1cm}\\
      \leq \varepsilon_{4,m}\Vert(y^{\prime},P^{\prime},\theta^{\prime},v^{\prime},v_{0}^{\prime})\Vert^{2}_{\mathcal{E}_{3}},     
    \end{array}
\end{equation*}
where 
\[
\varepsilon_{4,m}=C\Vert(y_m, P_m,\theta_m, v_m,{v_{0}}_{m})-(y, P,\theta, v,{v_{0}})\Vert^{2}_{\mathcal{E}_{3}}\Vert(y,P,\theta,v,v_{0})\Vert^{2}_{\mathcal{E}_{3}}.
\]
Also,
\begin{equation*}
    \begin{array}{l}
        K_{8}\leq C\displaystyle\int_{0}^{T}\left(\displaystyle\int_{\Omega}\mu_{2}^{3}\vert(\nu_{0}+\nu_{1}\Vert\nabla y_{m}\Vert^{2})\vert^{3/2}\vert\nabla (y_{m}-y)\vert^{3/2}\vert\nabla y^{\prime}\vert^{2}dx \right)^{4/3}dt\vspace{0.1cm} \\
        \leq C\,\left(1+\Vert \mu_{2}y_{m}\Vert^{4}_{L^{\infty}(0,T;V)} \right)\Vert\vert\nabla (\mu_{2}y^{\prime})\vert^{2}\Vert_{L^{2}(0,T;L^{3/2}(\Omega)^{3})}\vspace{0.1cm}\\
        \,\,\,\,\,\Vert \vert\nabla(\mu_{2}(y_{m}-y))\vert^{2}\Vert_{L^{2}(0,T;L^{3/2}(\Omega)^{3})}
        \leq \varepsilon_{8,m}\Vert(y^{\prime},P^{\prime},\theta^{\prime},v^{\prime},v_{0}^{\prime})\Vert^{2}_{\mathcal{E}_{3}},         
    \end{array}
\end{equation*}
where
\[
\varepsilon_{8,m}=C(1+\Vert(y_{m},P_{m},\theta_{m},v_{m},{v_{0}}_{m})\Vert^{4}_{\mathcal{E}_{3}})\Vert(y_m, P_m,\theta_m, v_m,{v_{0}}_{m})-(y, P,\theta, v,{v_{0}})\Vert^{2}_{\mathcal{E}_{3}}.
\]
And,
\begin{equation*}
    \begin{array}{l}
       K_{16}\leq \varepsilon_{16,m} \Vert(y^{\prime},P^{\prime},\theta^{\prime},v^{\prime},v_{0}^{\prime})\Vert^{2}_{\mathcal{E}_{3}},           
    \end{array}
\end{equation*}
where 
\[
\varepsilon_{16,m}=C\Vert(y_m, P_m,\theta_m, v_m,{v_{0}}_{m})-(y, P,\theta, v,{v_{0}})\Vert^{2}_{\mathcal{E}_{3}}.
\]

The other terms follow analogously. 

Thus, we have $\lim\limits_{m\to\infty}\varepsilon_{s,m}=0$ for all $s\in\lbrace 1,...,16\rbrace$ and consequently follows \eqref{convergencia da derivada}. This ends the proof.
\hfill
\end{proof}

\begin{lem}\label{Mapa sobrejetivo}
Let $\mathcal{F}$ be the mapping in \eqref{Mapa F}-\eqref{Mapas Fi}. Then, $\mathcal{F}^{\prime}(0,0,0,0,0)$ is onto.
\end{lem}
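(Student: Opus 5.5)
At the origin the derivative $\mathcal{DF}$ in \eqref{Df i} becomes linear and simple. Since $y=0$ and $\theta=0$, we have $\nu(\nabla 0)=\nu_{0}$, and every term containing $y$, $\theta$, $\nabla y$ or $\Delta y$ vanishes: the convective terms, the terms $2\nu_{1}(\nabla y,\nabla y^{\prime})\Delta y$ and $2\nu_{1}(\nabla y,\nabla y^{\prime})\Delta\theta$, and all the quadratic dissipation terms $Dy:\nabla y^{\prime}$ and $Dy^{\prime}:\nabla y$. Hence
\[
\mathcal{F}^{\prime}(0,0,0,0,0)(y^{\prime},P^{\prime},\theta^{\prime},v^{\prime},v_{0}^{\prime})=\bigl(\mathcal{L}_{1}y^{\prime}+\nabla P^{\prime}-\nu_{0}\theta^{\prime}e_{N}-v^{\prime}\tilde{1}_{\omega},\; y^{\prime}(.,0),\; \mathcal{L}_{2}\theta^{\prime}-v_{0}^{\prime}\tilde{1}_{\omega},\; \theta^{\prime}(.,0)\bigr).
\]
Surjectivity onto $\mathcal{Z}_{N}$ is therefore exactly the null controllability of the linear system \eqref{lad.Bous.Linear} with data in $\mathcal{Z}_{N}$, together with the requirement that the resulting state-control lies in $\mathcal{E}_{N}$.

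Fix $(F_{1},y^{0},F_{2},\theta^{0})\in\mathcal{Z}_{N}$, so $\rho_{3}F_{1}\in L^{2}(Q)^{N}$, $y^{0}\in V$, $\rho_{3}F_{2}\in L^{2}(0,T;L^{3/2}(\Omega))$ and $\theta^{0}\in W^{1,3/2}_{0}(\Omega)$. For $N=3$ we also need $y^{0}\in L^{4}(\Omega)^{3}$, which follows from $V\hookrightarrow L^{6}(\Omega)^{3}$. Proposition \ref{proposição 2.1} applies and gives $(y,P,\theta,v,v_{0})$ solving \eqref{lad.Bous.Linear}, with $y(.,T)=\theta(.,T)=0$ and the weighted bound \eqref{2.8 de CaraLimacoNany}. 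This yields $\rho_{1}y,\rho_{1}\theta\in L^{2}(Q)$ and $\rho_{2}v,\rho_{2}v_{0}\in L^{2}(\omega\times(0,T))$.

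It remains to check the other defining conditions of $\mathcal{E}_{N}$.
\begin{itemize}
\item Since $\mathcal{L}_{1}y+\nabla P-\nu_{0}\theta e_{N}-v\tilde{1}_{\omega}=F_{1}$ and $\mathcal{L}_{2}\theta-v_{0}\tilde{1}_{\omega}=F_{2}$, the weighted conditions on $F_{1},F_{2}$ hold by assumption.
\item The initial traces are $y(.,0)=y^{0}\in V$ and $\theta(.,0)=\theta^{0}\in W^{1,3/2}_{0}(\Omega)$, and the conditions $\nabla\cdot y=0$ and $\theta|_{\Sigma}=0$ hold by construction.
\item The regularity $y\in L^{2}(0,T;D(A))$, $\nabla y\in L^{2}(Q)$, $\theta\in L^{2}(0,T;W^{2,3/2}(\Omega))$ and $P\in L^{2}(0,T;H^{1}(\Omega))$ remains to be shown.
\end{itemize}

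For this regularity, first treat $\theta$. Its source $v_{0}\tilde{1}_{\omega}+F_{2}$ lies in $L^{2}(0,T;L^{3/2}(\Omega))$, because $\rho_{2},\rho_{3}$ are bounded below and $L^{2}\hookrightarrow L^{3/2}$ on bounded $\Omega$. Lemma \ref{parabolic in LpLq} with $r=2$, $s=3/2$ then gives $\theta\in L^{2}(0,T;W^{2,3/2}(\Omega))$. Alternatively, estimate \eqref{2.11 CaraLimacoNany} gives this directly, using $\mu_{2}^{-1}\le C$.

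Next treat $y$. The Stokes source $\nu_{0}\theta e_{N}+v\tilde{1}_{\omega}+F_{1}$ lies in $L^{2}(Q)^{N}$, because $\rho_{1}\theta\in L^{2}(Q)$ with $\rho_{1}^{-1}$ bounded. Lemma \ref{Stokes em V} with $y^{0}\in V$ then gives $y\in L^{2}(0,T;D(A))\cap L^{\infty}(0,T;V)$ and $P\in L^{2}(0,T;H^{1}(\Omega))$, after normalizing the pressure. Equivalently, estimate \eqref{2.10 CaraLimacoNany} gives the same conclusion.

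Thus $(y,P,\theta,v,v_{0})\in\mathcal{E}_{N}$ and $\mathcal{F}^{\prime}(0)(y,P,\theta,v,v_{0})=(F_{1},y^{0},F_{2},\theta^{0})$, which proves surjectivity.

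The main obstacle is only bookkeeping: every norm in $\|\cdot\|_{\mathcal{E}_{N}}$ must be finite, and the weights must be comparable in the right direction ($\rho_{1},\rho_{2},\rho_{3},\mu_{2}$ bounded below by positive constants). Both facts follow from \eqref{des pesos} and from the definitions of the weights, since $\hat{\alpha},\alpha^{\ast}>0$ blow up as $t\to T$ while $\hat{\xi}$ remains bounded below. One subtle point is that the pressure is only determined up to a function of $t$; it should be fixed, for instance by zero mean, so that $P\in L^{2}(0,T;H^{1}(\Omega))$.
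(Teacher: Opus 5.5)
Your proposal is correct and follows the paper's own argument: given $(F_{1},y^{0},F_{2},\theta^{0})\in\mathcal{Z}_{N}$, you apply Proposition \ref{proposição 2.1} to the linear system \eqref{lad.Bous.Linear} and use parabolic and Stokes regularity to place the resulting state-control in $\mathcal{E}_{N}$, so that $\mathcal{F}'(0)$ maps it to the data; beyond the paper, you also make explicit the form of $\mathcal{F}'(0)$, the $L^{4}$ hypothesis for $N=3$, the regularity of $\theta$, and the pressure normalization. One small correction: $\rho_{1},\rho_{2},\mu_{2}$ are bounded below not because $\hat{\xi}$ is bounded below (that only bounds their factors $\hat{\xi}^{-k}$ from above, which is the wrong direction), but because \eqref{des.16>15} makes exponents such as $2\hat{\alpha}-\alpha^{\ast}$ positive multiples of $\ell(t)^{-4}$, so the exponential factor dominates $\hat{\xi}^{-k}\sim\ell(t)^{4k}$.
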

\begin{proof}
Let $(F_{1},y^{0},F_{2},\theta^{0})\in \mathcal{Z}_{N}$. From Proposition \ref{proposição 2.1} we know there exists $(y,P,\theta,v,v_{0})$ satisfying \eqref{lad.Bous.Linear} and \eqref{2.8 de CaraLimacoNany}. Furthermore, from the usual regularity results for the Stokes system we have $(y,P)\in (L^{2}(0,T;D(A))\times L^{2}(0,T;H^{1}(\Omega)))$. Consequently, $(y,P,\theta,v,v_{0})\in \mathcal{E}_{N}$ and $$\mathcal{F}^{\prime}(0,0,0,0,0)(y,P,\theta,v,v_{0})=(F_{1},y^{0},F_{2},\theta^{0}).$$ 
\hfill
\end{proof}

\noindent\textbf{Proof of Theorem $\ref{Teo principal control nulo}$}
We conclude from Lemmas \ref{F bem definido}--\ref{Mapa sobrejetivo} that the Inverse Mapping Theorem (Theorem \ref{Liusternik}) can be applied to the spaces $\mathcal{E}_{N}$ and $\mathcal{Z}_{N}$ together with the mapping $\mathcal{F}$ introduced
at the beginning of this Section. Thus, there exists $\delta>0$ such that, for every $(y^{0},\theta^{0})\in V\times W_{0}^{1,3/2}(\Omega)$ satisfying
$\Vert (y^{0},\theta^{0})\Vert_{V\times W_{0}^{1,3/2}}< \delta,$
there exists controls $v\in L^{2}(\omega\times (0,T))^{N}$ and $v_{0}\in L^{2}(\omega\times (0,T))$ and associated solutions $(y,p,\theta)$ to (\ref{lad.boussinesq}) such that $y(x,T)=0$ and $\theta(x,T)=0$ in $\Omega$.

This proves that, the nonlinear system \eqref{lad.boussinesq} is locally null-controllable at time $T>0$.

\subsection{Development for the proof of the Theorem  \ref{Teo extra control nulo}}
Let 
\begin{equation}\label{espaço UN}
    \begin{array}{c}
   \mathcal{U}_{N}=(y,P,\theta,v,v_{0}): \rho_{1}y \in L^{2}(Q)^{N}, \rho_{2}v, (\kappa v)_{t}, \kappa\Delta v\in L^{2}(\omega\times (0,T))^{N},  \vspace{0.1cm}\\
   y\in L^{q}(0,T;W^{2,p}(\Omega)^{N}), \nabla y\in L^{2}(Q)^{N\times N},
   P\in L^{q}(0,T;L^{p}(\Omega)),
   \rho_{1}\theta\in L^{2}(Q), \vspace{0.1cm}\\ 
   \rho_{2}v_{0}, (\kappa v_{0})_{t}, \kappa \Delta v_{0}\in L^{2}(\omega\times (0,T)),
   \theta\in L^{q}(0,T;W^{2,p}(\Omega)),\vspace{0.1cm}\\ 
  \text{for}\, F_{1}:=\mathcal{L}_{1}y+\nabla P - \nu_{0}\theta e_{N} - v\tilde{1}_{\omega} \,
  \text{and}\, F_{2}:=\mathcal{L}_{2}\theta -v_{0}\tilde{1}_{\omega},   
  \rho_{3}F_{1}\in L^{q}(0,T;L^{p}(\Omega)^{N}),\vspace{0.1cm}\\
   \rho_{3}F_{2}\in L^{q}(0,T;L^{p}(\Omega)),   
  \nabla\cdot y\equiv 0,
  y(.,0)\in V^{p}, \theta(.,0)\in W_{0}^{1,p}(\Omega), 
\vspace{0.1cm}\\
y\mid_{\Sigma} =0, \theta\mid_{\Sigma} =0,\text{where}\,\,  3<p\leq 6 \,\, \text{and}\,\, p<q<\infty\rbrace,  
    \end{array}
\end{equation}
It's clear that ${\mathcal{U}}_{N}$ is a Banach space for the norm $\Vert .\Vert_{ {\mathcal{U}}_{N}}$, with
\begin{equation*}
    \begin{array}{l}
     \Vert (y,P,\theta,v,v_{0})\Vert^{q}_{ {\mathcal{U}}_{N}}:= \Vert y\Vert^{q}_{L^{q}(0,T;W^{2,p}(\Omega)^{N})} + \Vert\theta\Vert^{q}_{L^{q}(0,T;W^{2,p}(\Omega))}\vspace{0.1cm}  
    \, + \,\Vert \rho_{1}y\Vert^{q}_{L^{2}(Q)^{N}} \vspace{0.1cm}\\
    +\, \Vert\rho_{1}\theta\Vert^{q}_{L^{2}(Q)} + \Vert P\Vert^{q}_{L^{q}(0,T;L^{p}(\Omega))} + \Vert\rho_{2}v\Vert^{q}_{L^{2}(\omega\times (0,T))^{N}} 
    \, +\, \Vert\rho_{2}v_{0}\Vert^{q}_{L^{2}(\omega\times (0,T))} \vspace{0.1cm}\\
   +\Vert(\kappa v)_{t}\Vert^{q}_{L^{2}(\omega\times (0,T))^{N}} \, 
   
   +\Vert\kappa \Delta v\Vert^{q}_{L^{2}(\omega\times (0,T))^{N}} \,
    \, +\, \Vert(\kappa v_{0})_{t}\Vert^{q}_{L^{2}(\omega\times (0,T))} \vspace{0.1cm}\\
    
    + \Vert\kappa \Delta v_{0}\Vert^{q}_{L^{2}(\omega\times (0,T))}
      
    + \Vert\rho_{3}F_{1}\Vert^{q}_{L^{q}(0,T;L^{p}(\Omega)^{N})}
     
     +\,\Vert\rho_{3}F_{2}\Vert^{q}_{L^{q}(0,T;L^{p}(\Omega))}\vspace{0.1cm}\\

     +\, \Vert y(.,0)\Vert^{q}_{V^{p}}+\,\Vert \theta(.,0)\Vert^{q}_{W_{0}^{1,p}(\Omega)}.
    \end{array}
\end{equation*}

Now, let us introduce the Banach space
\begin{equation}\label{espaço RN}
    {\mathcal{R}}_{N}:=L^{q}(\rho_{3}^{q}(0,T);L^{p}(\Omega)^{N})\times V^{p}\times L^{q}(\rho_{3}^{q}(0,T);L^{p}(\Omega))\times W^{1,p}_{0}(\Omega),
\end{equation}
and the mapping ${\mathcal{I}}:{\mathcal{U}}_{N}\rightarrow {\mathcal{R}}_{N}$, such that
\begin{equation}\label{Mapa F extra}
   {\mathcal{I}}(y,P,\theta,v,v_{0})=({\mathcal{I}}_{1},{\mathcal{I}}_{2}, 
   {\mathcal{I}}_{3},{\mathcal{I}}_{4})(y,P,\theta,v,v_{0})
\end{equation}
where
\begin{equation}\label{Mapas Fi extras}
    \left\{
    \begin{array}{l}
      {\mathcal{I}}_{1}(y,P,\theta,v,v_{0}):=y_{t}-\bar{\nu}(\nabla y)\Delta y + (y\cdot\nabla)y +\nabla P - \nu_{0}\theta e_{N} - v\tilde{1}_{\omega},\\
      {\mathcal{I}}_{2}(y,P,\theta,v,v_{0}):= y(.,0),\\
      {\mathcal{I}}_{3}(y,P,\theta,v,v_{0}):= \theta_{t} - \bar{\nu}(\nabla\theta)\Delta\theta +y\cdot\nabla\theta
      -\bar{\nu}(\nabla y)Dy:\nabla y - v_{0}\tilde{1}_{\omega} ,
      \\
      {\mathcal{I}}_{4}(y,P,\theta,v,v_{0}):= \theta(.,0).
    \end{array}\right.
\end{equation}
To simplify the notation, in the norms of $L^{p}(\Omega)^{N}$ we will just write $L^{p}(\Omega)$. That said, we have the following results:
\begin{lem}\label{F bem definido extra}
    Let ${\mathcal{I}}: {\mathcal{U}}_{N}\rightarrow  {\mathcal{R}}_{N}$ be given by (\ref{Mapa F extra})-(\ref{Mapas Fi extras}). Then, ${\mathcal{I}}$ is well defined and continuous around the origin. 
\end{lem}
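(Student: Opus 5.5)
We follow the scheme of Lemma \ref{F bem definido}: for each $(y,P,\theta,v,v_{0})\in\mathcal{U}_{N}$ we show that every component $\mathcal{I}_{i}$ lies in its factor of $\mathcal{R}_{N}$, with a bound by a polynomial in $\Vert(y,P,\theta,v,v_{0})\Vert_{\mathcal{U}_{N}}$. Continuity near the origin then follows by the same estimates applied to differences, after writing each multilinear term as a telescoping sum.

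The linear pieces are immediate. The part $\mathcal{L}_{1}y+\nabla P-\nu_{0}\theta e_{N}-v\tilde{1}_{\omega}=F_{1}$, and similarly $F_{2}$, lie in the weighted $L^{q}(0,T;L^{p})$ spaces by the definition of $\mathcal{U}_{N}$. Also $\mathcal{I}_{2}=y(.,0)\in V^{p}$ and $\mathcal{I}_{4}=\theta(.,0)\in W^{1,p}_{0}(\Omega)$ by definition.

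The key tool for the nonlinear terms is Proposition \ref{Proposicao com rhobarratheta}. Applied to $(y,P,\theta,v,v_{0})$, regarded as a solution of \eqref{lad.Bous.Linear} with data $F_{1},F_{2}$, it gives
\[
\kappa y,\kappa\theta\in L^{q}(0,T;W^{2,p}(\Omega))\cap C^{0}([0,T];W^{1,p}(\Omega)),
\]
with norms bounded by $C\Vert(y,P,\theta,v,v_{0})\Vert_{\mathcal{U}_{N}}$. We then use the weight comparison $\rho_{3}\le C\kappa^{2}$ and $\rho_{3}\le C\kappa^{3}$ with $\kappa^{-1}\le C$. These should follow from \eqref{des pesos}, since $\rho_{3}\le C\mu_{2}^{2}$, $\kappa\le C\mu_{2}$, and \eqref{des.16>15} makes the exponents compatible. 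This comparison must be checked carefully, and it is the first point I expect to be delicate.

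With these in hand we estimate the nonlinear terms as follows.
\begin{itemize}
\item Transport terms. Since $p>3\ge N$, we have $W^{1,p}\hookrightarrow L^{\infty}$, so $\Vert\rho_{3}(y\cdot\nabla)y\Vert_{L^{p}}\le C\Vert\kappa y\Vert_{W^{1,p}}\Vert\kappa y\Vert_{W^{2,p}}$. Taking the $L^{q}$ norm in time with $\sup_{t}\Vert\kappa y\Vert_{W^{1,p}}$ controlled gives a quadratic bound. The term $y\cdot\nabla\theta$ is handled the same way.
\item Nonlocal diffusion terms $\bar{\nu}(\nabla y)\Delta y$ and $\bar{\nu}(\nabla\theta)\Delta\theta$. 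We write $\bar\nu=\nu_{0}+\nu_{1}\Vert\nabla\cdot\Vert_{L^{p}}^{2}$. The $\nu_{0}$ part is linear and already contained in $F_{1},F_{2}$. For the rest,
\[
\rho_{3}\Vert\nabla y\Vert_{L^{p}}^{2}\Vert\Delta y\Vert_{L^{p}}\le C\Vert\kappa y\Vert^{2}_{C^{0}(W^{1,p})}\Vert\kappa y\Vert_{W^{2,p}},
\]
which is cubic, and likewise for $\theta$.
\item Dissipation term $\bar\nu(\nabla y)Dy:\nabla y$. This is where $p>3$ and $p\le 6$ matter. We need $|\nabla y|^{2}\in L^{p}$, i.e. $\nabla y\in L^{2p}$. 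Since $W^{1,p}\hookrightarrow L^{\infty}$ for $p>N$, we get $\Vert|\nabla y|^{2}\Vert_{L^{p}}\le\Vert\nabla y\Vert_{L^{\infty}}\Vert\nabla y\Vert_{L^{p}}\le C\Vert y\Vert_{W^{2,p}}\Vert y\Vert_{W^{1,p}}$. Hence
\[
\rho_{3}\Vert\bar\nu(\nabla y)Dy:\nabla y\Vert_{L^{p}}\le C\bigl(1+\Vert\kappa y\Vert^{2}_{C^{0}(W^{1,p})}\bigr)\Vert\kappa y\Vert_{C^{0}(W^{1,p})}\Vert\kappa y\Vert_{W^{2,p}},
\]
and its $L^{q}$ norm in time is bounded by a polynomial in $\Vert\cdot\Vert_{\mathcal{U}_{N}}$.
\end{itemize}

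The main obstacle is this last term together with the weight bookkeeping. Each product must carry enough powers of $\kappa$ to absorb $\rho_{3}$, and the $C^{0}(W^{1,p})$ control from Proposition \ref{Proposição 7.1 de Juan Jessica} requires $q>p>2$. If the weight comparison fails, I would instead use $\mu_{2},\mu_{3}$ with the $H^{1}$/$H^{2}$ bounds of Proposition \ref{proposição extra}. In that case the role of $p\le 6$ enters through $H^{1}\hookrightarrow L^{6}$, and one interpolates to recover the $L^{p}$ bounds.
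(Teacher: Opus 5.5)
Your proposal is correct and is essentially the paper's proof. It uses the same ingredients: Proposition \ref{Proposicao com rhobarratheta} to control $\kappa y,\kappa\theta$ in $L^{q}(0,T;W^{2,p}(\Omega))\cap C^{0}([0,T];W^{1,p}(\Omega))$ by the $\mathcal{U}_{N}$-norm, the embedding $W^{1,p}(\Omega)\hookrightarrow L^{\infty}(\Omega)$ for $p>N$, and the weight bounds $\rho_{3}\kappa^{-2},\rho_{3}\kappa^{-3},\rho_{3}\kappa^{-4}\le C$, with continuity left to analogous difference estimates.

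The one step to fix is how you justify those weight bounds. They cannot come from \eqref{des pesos}, because $\rho_{3}\le C\mu_{2}^{2}$ and $\kappa^{2}\le C\mu_{2}^{2}$ are both upper bounds by the same quantity and say nothing about $\rho_{3}$ versus $\kappa^{2}$. They follow directly from \eqref{des.16>15} instead: $\rho_{3}\kappa^{-2}=e^{s(17\alpha^{\ast}-18\hat{\alpha})}\hat{\xi}^{34}(\xi^{\ast})^{-1/2}$ is bounded, and $\kappa^{-1}\le C$ because $9\hat{\alpha}>8\alpha^{\ast}$, which gives the higher powers. Your fallback via $\mu_{2},\mu_{3}$ is therefore unnecessary.
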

\begin{proof}
Let's prove that, for each $(y,P,\theta,v,v_{0})\in \mathcal{U}_{N}$ we have $\mathcal{I}(y,P,\theta,v,v_{0})\in \mathcal{R}_{N}$.

That ${\mathcal{I}}_{2}$ and ${\mathcal{I}}_{4}$ are well defined follows immediately from the definition of $\mathcal{U}_{N}$. So let's find out ${\mathcal{I}}_{1}$ and ${\mathcal{I}}_{3}$.

\textbf{Analysis of ${\mathcal{I}}_{1}$:}
\begin{equation*}
    \begin{array}{l}
    \bullet \,\Vert\rho_{3}F_{1}\Vert^{q}_{L^{q}(0,T;L^{p}(\Omega))}\leq C\Vert (y,P,\theta,v,v_{0})\Vert^{q}_{\mathcal{U}_{N}}.
    \end{array}
\end{equation*}
Taking into account \eqref{des.16>15} we have $\rho_{3}\kappa^{-2}\leq C$. Moreover, using the fact that $W^{1,p}(\Omega)\hookrightarrow L^{\infty}(\Omega)$ (since $p>N$) and the estimate \eqref{estimate para y em Vp} from the Proposition \ref{Proposicao com rhobarratheta}, we obtain
\begin{equation}\label{y grad y limitado em Lp}
    \begin{array}{l}
   \bullet\, \Vert\rho_{3}(y\cdot\nabla)y\Vert^{q}_{L^{q}(0,T;L^{p}(\Omega))} \leq \displaystyle\int_{0}^{T}\left(\displaystyle\int_{\Omega}\rho_{3}^{p}\vert y\vert^{p}\vert\nabla y\vert^{p}dx
   \right)^{q/p}dt\vspace{0.1cm}\\
   = \displaystyle\int_{0}^{T}\left(\displaystyle\int_{\Omega}\rho_{3}^{p}\kappa^{-2p}\kappa^{2p}\vert y\vert^{p}\vert\nabla y\vert^{p}dx
   \right)^{q/p}dt  \vspace{0.1cm}\\
   \leq C\displaystyle\int_{0}^{T}\Vert\kappa y\Vert_{L^{\infty}(\Omega)}^{q}\left(\displaystyle\int_{\Omega}\vert\kappa\nabla y\vert^{p}dx\right)^{q/p}dt\vspace{0.1cm}\\
   \leq C\Vert\kappa y\Vert^{q}_{L^{\infty}(0,T;W^{1,p}(\Omega))}\Vert\kappa y\Vert^{q}_{L^{q}(0,T;W^{1,p}(\Omega))}
   \leq C\Vert(y,P,\theta,v,v_{0})\Vert^{2q}_{\mathcal{U}_{N}}.
    \end{array}
\end{equation}
In a similar way
\begin{equation}\label{gradiente limitado em Lp}
    \begin{array}{l}
\bullet\,\Vert\rho_{3}\nu_{1}\Vert\nabla y\Vert_{L^{p}}^{2}\Delta y\Vert^{q}_{L^{q}(0,T;L^{p}(\Omega))} \leq C \displaystyle\int_{0}^{T}\rho_{3}^{q}\kappa^{-3q}\Vert\kappa\nabla y\Vert^{2q}_{L^{p}(\Omega)}\Vert \kappa\Delta y\Vert^{q}_{L^{p}(\Omega)}dt\vspace{0.1cm}\\
         \leq C\Vert\kappa y\Vert^{2q}_{L^{\infty}(0,T;W^{1,p}(\Omega))}\Vert\kappa y\Vert^{q}_{L^{q}(0,T;W^{2,p}(\Omega))} \leq C\Vert(y,P,\theta,v,v_{0})\Vert^{3q}_{\mathcal{U}_{N}}.
    \end{array}
\end{equation}
Hence, $\mathcal{I}_{1}(y,P,\theta,v,v_{0})\in L^{q}(\rho_{3}^{q}(0,T);L^{p}(\Omega)^{N})$.

\textbf{Analysis of $\mathcal{I}_{3}$:}
\begin{equation*}
   \bullet\, \Vert\rho_{3}F_{2}\Vert^{q}_{L^{q}(0,T;L^{p}(\Omega))}\leq C\Vert(y,P,\theta,v,v_{0})\Vert^{q}_{\mathcal{U}_{N}}.
\end{equation*}
Using the same previous arguments together with the estimates \eqref{estimate para theta W{1,p}} and \eqref{estimate para y em Vp} from the Proposition \ref{Proposicao com rhobarratheta}, we get 
\begin{equation*}
    \begin{array}{l}
      \bullet\,  \Vert\rho_{3} y\cdot\nabla\theta\Vert^{q}_{L^{q}(0,T;L^{p}(\Omega))}\leq C\displaystyle\int_{0}^{T}\Vert\kappa\nabla\theta\Vert^{q}_{L^{\infty}(\Omega)}\left(\displaystyle\int_{\Omega}\vert\kappa y\vert^{p}dx\right)^{q/p}dt\vspace{0.1cm}\\
        \leq C\displaystyle\int_{0}^{T}\Vert\kappa\nabla\theta\Vert^{q}_{W^{1,p}(\Omega)}\Vert\kappa y\Vert^{q}_{L^{p}(\Omega)}dt\leq C \Vert\kappa y\Vert^{q}_{L^{\infty}(0,T;W^{1,p}(\Omega))}\Vert\kappa\theta\Vert^{q}_{L^{q}(0,T;W^{2,p}(\Omega))}\vspace{0.1cm}\\
        \leq C\Vert(y,P,\theta,v,v_{0})\Vert^{2q}_{\mathcal{U}_{N}};
    \end{array}
\end{equation*}
    \begin{equation*}
        \begin{array}{l}
         \bullet\,   \Vert \rho_{3}\nu_{1}\Vert\nabla\theta\Vert^{2}_{L^{p}}\Delta\theta\Vert^{q}_{L^{q}(0,T;L^{p}(\Omega))}\leq C\displaystyle\int_{0}^{T}\rho^{q}_{3}\kappa^{-3q}\Vert\kappa\nabla\theta\Vert^{2q}_{L^{p}(\Omega)}\Vert\kappa\Delta\theta\Vert^{q}_{L^{p}(\Omega)}dt\\
            \leq C\Vert\kappa\theta\Vert^{2q}_{L^{\infty}(0,T;W^{1,p}(\Omega))}\Vert\kappa\theta\Vert^{q}_{L^{q}(0,T;W^{2,p}(\Omega))}\leq C\Vert(y,P,\theta,v,v_{0})\Vert^{3q}_{\mathcal{U}_{N}};
            \end{array}
    \end{equation*}
and, using that $\rho_{3}\kappa^{-4}\leq C$,
\begin{equation*}
    \begin{array}{l}
\bullet\,\Vert\rho_{3}\bar{\nu}(\nabla y)Dy:\nabla y\Vert^{q}_{L^{q}(0,T;L^{p}(\Omega))} \leq C\Vert\kappa y\Vert^{q}_{L^{\infty}(0,T;W^{1,p}(\Omega))}\Vert\kappa y\Vert^{q}_{L^{q}(0,T;W^{2,p}(\Omega))}\vspace{0.1cm}\\
\,+ \,C\Vert\kappa y\Vert^{3q}_{L^{\infty}(0,T;W^{1,p}(\Omega))}\Vert\kappa y\Vert^{q}_{L^{q}(0,T;W^{2,p}(\Omega))}\leq C\Vert(y,P,\theta,v,v_{0})\Vert^{4q}_{\mathcal{U}_{N}}.
    \end{array}
\end{equation*}
Consequently we have $\mathcal{I}_{3}(y,P,\theta,v,v_{0})\in L^{q}(\rho_{3}^{q}(0,T);L^{p}(\Omega))$. 

Using similar arguments it is easy to check the $\mathcal{I}$ is continuous around the origin. This proves the Lemma.
\hfill
\end{proof}

\begin{lem}\label{DF continuo extra}
    The mapping ${\mathcal{I}}: {\mathcal{U}}_{N}\longrightarrow  {\mathcal{R}}_{N}$ is continuously differentiable.
\end{lem}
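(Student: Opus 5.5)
We follow the scheme of Lemma \ref{DF continuo}: first show that $\mathcal{I}$ is G\^ateaux-differentiable at every point of $\mathcal{U}_{N}$, then show that the G-derivative depends continuously on the base point in $\mathcal{L}(\mathcal{U}_{N},\mathcal{R}_{N})$. Together these give Fr\'echet differentiability and the $\mathcal{C}^{1}$ property.

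\textbf{Candidate derivative.} Fix $(y,P,\theta,v,v_{0})$ and a direction $(y',P',\theta',v',v_{0}')$. The only genuinely new nonlinearity is $\bar{\nu}(\nabla\varsigma)=\nu_{0}+\nu_{1}\Vert\nabla\varsigma\Vert^{2}_{L^{p}}$. Since $p>3>2$, the map $g\mapsto\Vert g\Vert^{2}_{L^{p}}$ is $\mathcal{C}^{1}$ on $L^{p}$, with derivative
\[
\langle D\Vert g\Vert^{2}_{L^{p}},h\rangle = 2\Vert g\Vert^{2-p}_{L^{p}}\int_{\Omega}\vert g\vert^{p-2}g:h\,dx
\]
(equal to $0$ when $g=0$). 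Its operator norm is bounded by $2\Vert g\Vert_{L^{p}}$, and $g\mapsto\vert g\vert^{p-2}g\in L^{p'}$ is continuous. We therefore define $\mathcal{DI}$ exactly as in \eqref{Df i}, with two changes:
\begin{itemize}
\item $2\nu_{1}(\nabla y,\nabla y')$ is replaced by $\nu_{1}\langle D\Vert\nabla y\Vert^{2}_{L^{p}},\nabla y'\rangle$;
\item in $\mathcal{I}_{3}$, the diffusion term now gives $-\bar{\nu}(\nabla\theta)\Delta\theta'-\nu_{1}\langle D\Vert\nabla\theta\Vert^{2}_{L^{p}},\nabla\theta'\rangle\Delta\theta$.
\end{itemize}
Boundedness of $\mathcal{DI}$ from $\mathcal{U}_{N}$ to $\mathcal{R}_{N}$ follows from the multilinear estimates of Lemma \ref{F bem definido extra}. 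Concretely, we insert the weights $\rho_{3}\kappa^{-2}\le C$, $\rho_{3}\kappa^{-3}\le C$, $\rho_{3}\kappa^{-4}\le C$ (from \eqref{des.16>15}), use $W^{1,p}\hookrightarrow L^{\infty}$, and apply the bounds \eqref{estimate para theta W{1,p}} and \eqref{estimate para y em Vp} for $\kappa y$ and $\kappa\theta$ in $L^{\infty}(0,T;W^{1,p})\cap L^{q}(0,T;W^{2,p})$. Since these estimates hold for any element of $\mathcal{U}_{N}$, each of them is bounded by the $\mathcal{U}_{N}$ norm.

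\textbf{G\^ateaux convergence.} We split the difference quotient minus $\mathcal{DI}$ into terms, as with $H_{1},H_{2},H_{3}$ and $I_{1},\dots,I_{7}$:
\begin{itemize}
\item Terms that are quadratic in the increment carry a factor $\sigma$ and are bounded by the estimates above, so they vanish as $\sigma\to0$.
\item Terms of the form $\bigl[\sigma^{-1}(\Vert\nabla(y+\sigma y')\Vert^{2}_{L^{p}}-\Vert\nabla y\Vert^{2}_{L^{p}})-\langle D\Vert\nabla y\Vert^{2}_{L^{p}},\nabla y'\rangle\bigr]\Delta y$ are handled pointwise in $t$. The bracket tends to $0$ for a.e.\ $t$ by differentiability of the norm. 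By the mean value theorem it is dominated by $C(\Vert\nabla y\Vert_{L^{p}}+\Vert\nabla y'\Vert_{L^{p}})\Vert\nabla y'\Vert_{L^{p}}$, which is controlled in weighted $L^{\infty}(0,T)$ by $\kappa$ and the $C^{0}([0,T];W^{1,p})$ bound. Dominated convergence in $L^{q}(0,T)$, with weight $\rho_{3}\kappa^{-3}$, then closes the argument.
\item The $\theta$-diffusion terms are treated identically, with $\theta$ in place of $y$.
\end{itemize}

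\textbf{Continuity of the derivative (main obstacle).} Let $(y_{m},\dots)\to(y,\dots)$ in $\mathcal{U}_{N}$. We seek $\varepsilon_{m}\to0$ with $\Vert(\mathcal{I}'(y_{m},\dots)-\mathcal{I}'(y,\dots))(\cdot)\Vert_{\mathcal{R}_{N}}\le\varepsilon_{m}\Vert\cdot\Vert_{\mathcal{U}_{N}}$. The terms inherited from Lemma \ref{DF continuo} are handled by adding and subtracting, exactly as for $K_{1},\dots,K_{16}$, using the $L^{p}$-based estimates. The difficulty is the difference
\[
\langle D\Vert\nabla y_{m}\Vert^{2}_{L^{p}}-D\Vert\nabla y\Vert^{2}_{L^{p}},\nabla y'\rangle,
\]
which is no longer bilinear, so the $\varepsilon_{m}$ cannot be written as an explicit Lipschitz factor. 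Since $p>2$, the map $g\mapsto\Vert g\Vert^{2-p}_{L^{p}}\vert g\vert^{p-2}g$ is locally Lipschitz from $L^{p}$ into $L^{p'}$ (it is homogeneous of degree $1$ and $\mathcal{C}^{1}$ away from $0$). One could therefore try the pointwise bound $\vert D\Vert g_{1}\Vert^{2}-D\Vert g_{2}\Vert^{2}\vert\le C\Vert g_{1}-g_{2}\Vert_{L^{p}}$ on bounded sets. If this Lipschitz bound is not available near $g=0$, we fall back on continuity alone: the $C^{0}([0,T];W^{1,p})$ convergence of $\kappa y_{m}$ to $\kappa y$ together with uniform continuity on compact subsets of $L^{p}$ gives $\sup_{t}\Vert\cdot\Vert_{\mathcal{L}(L^{p},\mathbb{R})}\to0$. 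In either case the resulting factor, combined with the $\kappa$ weight bounds, produces the required $\varepsilon_{m}\to0$. This is the step where care is needed; everything else is a routine adaptation of the earlier proof.
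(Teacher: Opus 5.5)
Your proposal is correct and follows essentially the paper's route: the same G\^ateaux derivative as in \eqref{DIi}, the same splitting of the difference quotients controlled by $\rho_{3}\kappa^{-2},\rho_{3}\kappa^{-3},\rho_{3}\kappa^{-4}\le C$, $W^{1,p}(\Omega)\hookrightarrow L^{\infty}(\Omega)$ and Proposition \ref{Proposicao com rhobarratheta}, and then an add-and-subtract argument for the continuity of the derivative. The only real difference is the key term $\tilde{K}_{2,2}$. The paper proves the Lipschitz bound $C\Vert\nabla(y_{m}-y)\Vert_{L^{p}}\Vert\nabla y'\Vert_{L^{p}}$ by hand, normalizing by $\Vert\nabla y_{m}\Vert_{L^{p}}$ and $\Vert\nabla y\Vert_{L^{p}}$ and applying the mean value theorem to $s\mapsto s^{p-2}$, which uses $p\ge 3$. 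You instead get it from the degree-one homogeneity of the $L^{p}$ duality map, which in infinite dimensions also needs the easy check that its derivative is uniformly bounded on the unit sphere, or from uniform continuity along the compact trajectory $\{\nabla y(t)\}$; either suffices, and your convention that the derivative vanishes at $g=0$ also avoids the paper's tacit division by $\Vert\nabla y\Vert_{L^{p}}$ at times where it may be zero.
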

\begin{proof}
     Let us first prove that $\mathcal{I}$ is Gâteaux-differentiable at any $(y,P,\theta,v,v_{0})\in  \mathcal{U}_{N}$ and let us
compute the \textit{G-derivative} $\mathcal{I}^{\prime}(y,P,\theta,v,v_{0})$. 

Let us fix $(y,P,\theta,v,v_{0})\in  \mathcal{U}_{N}$ and let us take $(y^{\prime},P^{\prime},\theta^{\prime},v^{\prime},v_{0}^{\prime})\in  \mathcal{U}_{N}$ and $\sigma >0$. Also, by the decomposition made in \eqref{Mapas Fi extras}, we introduce the linear mapping $\mathcal{I}: \mathcal{U}_{N}\longrightarrow  \mathcal{R}_{N}$ with $\mathcal{DI}(y,P,\theta,v,v_{0})=\mathcal{DI}=(\mathcal{DI}_{1},\mathcal{DI}_{2},\mathcal{DI}_{3},\mathcal{DI}_{4})$ where
\begin{equation}\label{DIi}
 \left\{ \begin{array}{l}
 \mathcal{DI}_{1}(y^{\prime},P^{\prime},\theta^{\prime},v^{\prime},v_{0}^{\prime}):= y^{\prime}_{t}-\bar{\nu}(\nabla y)\Delta y^{\prime} - \left(2\nu_{1}\Vert\nabla y\Vert_{L^{p}}^{2-p}\displaystyle\int_{\Omega}\vert\nabla y\vert^{p-2}\nabla y\nabla y^{\prime}dx \right)\Delta y\vspace{0.1cm}\\ 
 \hspace{3.5cm} +\, \nabla P^{\prime} +(y^{\prime}\cdot\nabla)y + (y\cdot\nabla)y^{\prime}-{\nu_{0}}\theta^{\prime}e_{3}-v^{\prime}\chi_{\omega},\vspace{0.15cm}  \\
  \mathcal{DI}_{2}(y^{\prime},P^{\prime},\theta^{\prime},v^{\prime},v_{0}^{\prime}):= y^{\prime}(.,0), \vspace{0.15cm}\\
  \mathcal{DI}_{3}(y^{\prime},P^{\prime},\theta^{\prime},v^{\prime},v_{0}^{\prime}):= \theta^{\prime}_{t}-\bar{\nu}(\nabla \theta)\Delta \theta^{\prime} - \left(2\nu_{1}\Vert\nabla \theta\Vert_{L^{p}}^{2-p}\displaystyle\int_{\Omega}\vert\nabla \theta\vert^{p-2}\nabla \theta\nabla \theta^{\prime}dx \right)\Delta\theta\vspace{0.1cm} \\
\hspace{3.5cm} +\, y^{\prime}\cdot\nabla\theta + y\cdot\nabla\theta^{\prime}-v_{0}^{\prime}\chi_{\omega} -\bar{\nu}(\nabla y) D y:\nabla y^{\prime}    \vspace{0.1cm}\\
  \hspace{3.5cm}-\,\left[\bar{\nu}(\nabla y) D y^{\prime} + \left(2\nu_{1}\Vert\nabla y\Vert_{L^{p}}^{2-p}\displaystyle\int_{\Omega}\vert\nabla y\vert^{p-2}\nabla y\nabla y^{\prime}dx \right) D y \right]:\nabla y,\vspace{0.15cm}\\

  \mathcal{DI}_{4}(y^{\prime},P^{\prime},\theta^{\prime},v^{\prime},v_{0}^{\prime}):= \theta^{\prime}(.,0).
    \end{array}\right.
\end{equation}

From the definition of the spaces $\mathcal{U}_{N}, \mathcal{R}_{N}$ and \eqref{DIi}, it
becomes clear that $\mathcal{DI}\in \mathcal{L}(\mathcal{U}_{N}, \mathcal{R}_{N})$. Furthermore, for each $j=\lbrace 1, 2, 3, 4\rbrace$ we have
\begin{equation}\label{Converg de Ii}
\begin{array}{l}
    \dfrac{1}{\sigma}[\mathcal{I}_{j}\left((y,P,\theta,v,v_{0})+\sigma(y^{\prime},P^{\prime},\theta^{\prime},v^{\prime},v_{0}^{\prime})\right) -\mathcal{I}_{j}(y,P,\theta,v,v_{0})]\,\vspace{0.1cm}\\
\hspace{-1.5cm}\text{converges to}\, \mathcal{DI}_{j}(y^{\prime},P^{\prime},\theta^{\prime},v^{\prime},v_{0}^{\prime})\, \text{strong in}\, \mathcal{R}_{N},\, \text{as}\, \sigma\longrightarrow 0. 
\end{array}
\end{equation}

Firstly, notice that, 
\begin{equation*}
\begin{array}{l}
    \Vert\dfrac{1}{\sigma}[\mathcal{I}_{1}\left((y,P,\theta,v,v_{0})+\sigma(y^{\prime},P^{\prime},\theta^{\prime},v^{\prime},v_{0}^{\prime})\right) -\mathcal{I}_{1}(y,P,\theta,v,v_{0})]\vspace{0.1cm}\\
    \,\,\,- \mathcal{DI}_{1}(y^{\prime},P^{\prime},\theta^{\prime},v^{\prime},v_{0}^{\prime})\Vert_{L^{q}(\rho_{3}^{q}(0,T);L^{p}(\Omega))}\leq \sigma\Vert (y^{\prime}\cdot\nabla)y^{\prime}\Vert_{L^{q}(\rho_{3}^{q}(0,T);L^{p}(\Omega))}\vspace{0.1cm}\\
\,\,\,+\,\Vert\dfrac{\nu_{1}}{\sigma}\left(\Vert\nabla (y + \sigma y^{\prime})\Vert_{L^{p}}^{2}-\Vert\nabla y\Vert_{L^{p}}^{2}\right)\Delta y \vspace{0.1cm}\\ 
-\left(2\nu_{1}\Vert\nabla y\Vert_{L^{p}}^{2-p}\displaystyle\int_{\Omega}\vert\nabla y\vert^{p-2}\nabla y\nabla y^{\prime}dx \right)\Delta y \Vert_{L^{q}(\rho_{3}^{q}(0,T);L^{p}(\Omega))}\vspace{0.1cm}\\     
     \,\,\,+\,\Vert\nu_{1}(\Vert\nabla (y+\sigma y^{\prime})\Vert_{L^{p}}^{2}-\Vert\nabla y\Vert_{L^{p}}^{2})\Delta y^{\prime}\Vert_{L^{q}(\rho_{3}^{q}(0,T);L^{p}(\Omega))}= \tilde{H}_{1}+\tilde{H}_{2}+\tilde{H}_{3}.
    \end{array}
\end{equation*}
That $\tilde{H}_1\rightarrow 0$, as $\sigma\rightarrow 0$, is immediate.

Let's analyze $\tilde{H}_3$, using first order Taylor expansion and discarding terms of higher order than $\sigma$, we have
\begin{equation*}
    \begin{array}{l}
         \left(\displaystyle\int_{\Omega}\vert\nabla(y+\sigma y^{\prime})\vert^{p}dx\right)^{2/p} = \left(\displaystyle\int_{\Omega}\vert\nabla y\vert^{p}dx + \sigma\displaystyle\int_{\Omega}p\vert\nabla y\vert^{p-2}\nabla y\nabla y^{\prime}dx\right)^{2/p}\vspace{0.1cm}\\
         =\left(\displaystyle\int_{\Omega}\vert\nabla y\vert^{p}dx\right)^{2/p} + \dfrac{2\sigma}{p}\left(\displaystyle\int_{\Omega}\vert\nabla y\vert^{p}dx\right)^{(2-p)/p}\displaystyle\int_{\Omega}p\vert\nabla y\vert^{p-2}\nabla y\nabla y^{\prime}dx \vspace{0.1cm}\\

         = \Vert \nabla y\Vert^{2}_{L^{p}} + 2\sigma\Vert\nabla y\Vert^{2-p}_{L^{p}}\displaystyle\int_{\Omega}\vert\nabla y\vert^{p-2}\nabla y\nabla y^{\prime}dx.
    \end{array}
\end{equation*}
Then,
\begin{equation*}
    \begin{array}{l}
         \displaystyle\lim_{\sigma\to 0}\left(\Vert\nabla (y+\sigma y^{\prime})\Vert^{2}_{L^{p}}-\Vert\nabla y\Vert^{2}_{L^{p}}\right)\Delta y^{\prime} \\
         = \displaystyle\lim_{\sigma\to 0}\left(\Vert\nabla y\Vert^{2}_{L^{p}} + 2 \sigma \Vert\nabla y\Vert_{L^{p}}^{2-p}\displaystyle\int_{\Omega}\vert\nabla y\vert^{p-2}\nabla y\nabla y^{\prime}dx - \Vert\nabla y\Vert^{2}_{L^{p}} \right)\Delta y^{\prime}\\
        = \displaystyle\lim_{\sigma\to 0}\, \sigma \left(2\Vert\nabla y\Vert_{L^{p}}^{2-p}\displaystyle\int_{\Omega}\vert\nabla y\vert^{p-2}\nabla y\nabla y^{\prime}dx\right)\Delta y^{\prime} = 0.
    \end{array}
\end{equation*}
Therefore,  by the arguments used in the analysis of $\mathcal{I}_{1}$ in Lemma \ref{F bem definido extra} and by Lebesgue's dominated convergence theorem we obtain $\tilde{H}_{3}\to 0$ as $\sigma \rightarrow 0$. In a similar way, $\tilde{H}_{2}\to 0$ as $\sigma \rightarrow 0$. 

For $j=2$ and $j=3$ the convergence \eqref{Converg de Ii} is prompt.

Finally, let's see that
\begin{equation}\label{Converg de I3}
\begin{array}{l}
    \dfrac{1}{\sigma}[\mathcal{I}_{3}\left((y,P,\theta,v,v_{0})+\sigma(y^{\prime},P^{\prime},\theta^{\prime},v^{\prime},v_{0}^{\prime})\right) -\mathcal{I}_{3}(y,P,\theta,v,v_{0})]\vspace{0.1cm}\\

\hspace{-1.6cm}\, \text{converges to} \,\mathcal{DI}_{3}(y^{\prime},P^{\prime},\theta^{\prime},v^{\prime},v_{0}^{\prime})\, \text{strong in}\, L^{q}(\rho_{3}^{q}(0,T);L^{p}(\Omega)), \, \text{as}\, \sigma\longrightarrow 0.
\end{array}
\end{equation}

Here, for simplicity, we will also omit the notation of norms but make it clear that they are all norms in $L^{q}(\rho^{q}_{3}(0,T);L^{p} (\Omega))$. Therefore,
\begin{equation*}
    \begin{array}{l}
     \Vert  \dfrac{1}{\sigma}[\mathcal{I}_{3}\left((y,P,\theta,v,v_{0})+\sigma(y^{\prime},P^{\prime},\theta^{\prime},v^{\prime},v_{0}^{\prime})\right) -\mathcal{I}_{3}(y,P,\theta,v,v_{0})]\vspace{0.1cm} \\
     
\,\,\,  -\mathcal{DI}_{3}(y^{\prime},P^{\prime},\theta^{\prime},v^{\prime},v_{0}^{\prime})\Vert\leq \sigma\Vert y^{\prime}\cdot\nabla\theta^{\prime}\Vert

+ \sigma\Vert(\nu_{0}+\nu_{1}\Vert\nabla(y+\sigma y^{\prime})\Vert_{L^{p}}^{2})Dy^{\prime}:\nabla y^{\prime}\Vert\vspace{0.1cm}\\

\,\,\, +\Vert\dfrac{\nu_{1}}{\sigma}\left( \Vert\nabla(\theta+\sigma \theta^{\prime})\Vert_{L^{p}}^{2}-\Vert\nabla \theta\Vert_{L^{p}}^{2}\right)\Delta\theta - \left(2\nu_{1}\Vert\nabla\theta\Vert^{2-p}_{L^{p}}\displaystyle\int_{\Omega}\vert\nabla \theta\vert^{p-2}\nabla\theta\nabla\theta^{\prime}dx\right)\Delta\theta\Vert\vspace{0.1cm}\\

\,\,\, +\Vert\nu_{1}(\Vert\nabla(\theta+\sigma \theta^{\prime})\Vert_{L^{p}}^{2}-\Vert\nabla \theta\Vert_{L^{p}}^{2})\Delta\theta^{\prime}\Vert

 +\Vert\dfrac{\nu_{1}}{\sigma}\left( \Vert\nabla(y+\sigma y^{\prime})\Vert_{L^{p}}^{2}-\Vert\nabla y\Vert_{L^{p}}^{2}\right)Dy :\nabla y \vspace{0.1cm}\\ 
\,\,- \left(2\nu_{1}\Vert\nabla y\Vert^{2-p}_{L^{p}}\displaystyle\int_{\Omega}\vert\nabla y\vert^{p-2}\nabla y\nabla y^{\prime}dx\right) D y:\nabla y\Vert

 +\Vert\nu_{1}(\Vert\nabla(y+\sigma y^{\prime})\Vert_{L^{p}}^{2}-\Vert\nabla y\Vert_{L^{p}}^{2})Dy:\nabla y^{\prime}\Vert\vspace{0.1cm}\\

\,\,\,+\Vert\nu_{1}(\Vert\nabla(y+\sigma y^{\prime})\Vert_{L^{p}}^{2}-\Vert\nabla y\Vert_{L^{p}}^{2})Dy^{\prime}:\nabla y\Vert=\displaystyle\sum_{l=1}^{7}\tilde{I}_{l}.
\end{array}
\end{equation*}

By the same arguments from the proof of Lemma \ref{F bem definido extra}  together with Lebesgue's dominated convergence theorem, we have
\[
\displaystyle\sum_{l=1}^{5}\tilde{I}_{l}\longrightarrow 0,\,\,\text{as}\,\, \sigma\longrightarrow 0.
\]
Also, using Hölder's inequality for $p$ and $\frac{p}{p-1}$ and $W^{1,p}(\Omega)\hookrightarrow L^{\infty}(\Omega)$,
\begin{equation}\label{I6 tilde}
\begin{array}{ll}
    \tilde{I}^{q}_{6}\leq C\displaystyle\int_{0}^{T}\left(\displaystyle\int_{\Omega}\rho_{3}^{p}(\Vert\nabla (y+\sigma y^{\prime})\Vert_{L^{p}}^{2}-\Vert\nabla y\Vert^{2}_{L^{p}})^{p}\vert\nabla y\vert^{p}\vert\nabla y^{\prime}\vert^{p}dx\right)^{q/p}dt\vspace{0.1cm}\\
    
\leq C\displaystyle\int_{0}^{T}\sigma^{q}\left(\Vert\nabla y\Vert_{L^{p}}^{2-p}\displaystyle\int_{\Omega}\vert\nabla y\vert^{p-2}\nabla y\nabla y^{\prime}dx\right)^{q}\left(\displaystyle\int_{\Omega}\rho_{3}^{p}\vert\nabla y\vert^{p}\vert\nabla y^{\prime}\vert^{p}dx\right)^{q/p}dt\vspace{0.1cm}\\

\leq C\,\sigma^{q}\displaystyle\int_{0}^{T}\Vert \nabla y\Vert^{q(2-p)}_{L^{p}(\Omega)}\left(\displaystyle\int_{\Omega}\vert\nabla y\vert^{p-1}\vert\nabla y^{\prime}\vert dx\right)^{q}\left(\displaystyle\int_{\Omega}\rho_{3}^{p}\vert\nabla y\vert^{p}\vert\nabla y^{\prime}\vert^{p}dx\right)^{q/p}dt \vspace{0.1cm}\\

\leq C\sigma^{q}\left(\displaystyle\sup_{[0,T]}\Vert \nabla y\Vert^{q}_{L^{p}(\Omega)}\right)^{\hspace{-0.1cm}(2-p)} \displaystyle\int_{0}^{T}\left(\Vert \nabla y\Vert_{L^{p}(\Omega)}^{q(p-1)}\Vert\nabla y^{\prime}\Vert_{L^{p}(\Omega)}^{q}\right)\left(\displaystyle\int_{\Omega}\rho_{3}^{p}\vert\nabla y\vert^{p}\vert\nabla y^{\prime}\vert^{p}dx\right)^{q/p}\hspace{-0.4cm}dt 
\vspace{0.1cm}\\

\leq C\sigma^{q} \Vert \kappa y \Vert_{L^{\infty}(0,T;W^{1,p}(\Omega))}^{q(2-p) + q(p-1)}\Vert \kappa y^{\prime}\Vert^{q}_{L^{\infty}(0,T;W^{1,p}(\Omega))}\displaystyle\int_{0}^{T}\Vert\kappa\nabla y^{\prime}\Vert^{q}_{L^{\infty}(\Omega)}\left(\displaystyle\int_{\Omega}\vert\kappa\nabla y\vert^{p}dx\right)^{q/p}\hspace{-0.4cm}dt\vspace{0.1cm}\\

\leq C\sigma^{q} \Vert \kappa y \Vert_{L^{\infty}(0,T;W^{1,p}(\Omega))}^{2q}\Vert \kappa y^{\prime}\Vert^{q}_{L^{\infty}(0,T;W^{1,p}(\Omega))}\Vert\kappa y^{\prime}\Vert^{q}_{L^{q}(0,T;W^{2,p}(\Omega))}.
    \end{array}
\end{equation}
From this, we can deduce that $\tilde{I}_{6}\longrightarrow 0$, as $\sigma\longrightarrow 0$. By the same arguments we also have $\tilde{I}_{7}\longrightarrow 0$, as $\sigma\longrightarrow 0$. Consequently, \eqref{Converg de I3} is true.

Then we can conclude that \eqref{Converg de Ii} holds and $\mathcal{I}$ is G\^ateaux-differentiable at any $(y,p,\theta,v,v_{0})\in  \mathcal{U}_{N}$, with \textit{G-derivative} $\mathcal{I}^{\prime}(y,p,\theta,v,v_{0})={\mathcal{DI}}(y,p,\theta,v,v_{0})$.

Now, we will show that $(y,P,\theta,v,v_{0})\longmapsto\mathcal{I}^{\prime}(y,P,\theta,v,v_{0})$ is continuous from $\mathcal{U}_{N}$ into $\mathcal{L}(\mathcal{U}_{N},\mathcal{R}_{N})$ and as consequently, in view of classical results, we will
have that $\mathcal{I}$ is Fr\'echet-differentiable and $\mathcal{C}^{1}$. Thus, suppose that $$(y_m, P_m,\theta_m, v_m,{v_{0}}_{m})\longrightarrow (y, P, \theta, v, v_{0})\,\, \text{in}\,\,  \mathcal{U}_{N}$$ and let us check that
\begin{equation}\label{convergencia da derivada de I}
    \begin{array}{c}
\Vert\left(\mathcal{I}^{\prime}(y_m, P_m,\theta_m, v_m,{v_{0}}_{m})- \mathcal{I}^{\prime}(y, P,\theta, v,v_{0})\right)(y^{\prime},P^{\prime},\theta^{\prime},v^{\prime},v_{0}^{\prime})\Vert_{ \mathcal{R}_{N}}\vspace{0.1cm}\\

\leq\chi_{m}\Vert(y^{\prime},P^{\prime},\theta^{\prime},v^{\prime},v_{0}^{\prime})\Vert_{\mathcal{U}_{N}},
    \end{array}
\end{equation}
for all $(y^{\prime},P^{\prime},\theta^{\prime},v^{\prime},v_{0}^{\prime})\in  \mathcal{U}_{N}$, for some $\displaystyle\lim_{m\to\infty}\chi_{m}=0$.

In order to simplify the notation, we will consider
\begin{equation*}
\mathbb{D}_{j,m}:=\mathcal{I}_{j}^{\prime}(y_m, P_m,\theta_m, v_m,{v_{0}}_{m})-\mathcal{I}_{j}^{\prime}(y, p,\theta, v,v_{0}).
\end{equation*}
So, notice that
\begin{equation}\label{D1,m em Lq}
\begin{array}{l}
\bullet\,\,\,  \Vert\mathbb{D}_{1,m}(y^{\prime},P^{\prime},\theta^{\prime},v^{\prime},v_{0}^{\prime})\Vert_{L^{q}(\rho_{3}^{q}(0,T);L^{p}(\Omega))}\vspace{0.1cm}\\
\leq C\left(\Vert\nu_{1}(\Vert\nabla y\Vert_{L^{p}(\Omega)}^{2}-\Vert\nabla y_{m}\Vert_{L^{p}(\Omega)}^{2} 
 )\Delta y^{\prime}\Vert_{L^{q}(\rho_{3}^{q}(0,T);L^{p}(\Omega))}\right.\vspace{0.1cm}\\
 +\, \Vert \left(2\nu_{1}\Vert\nabla y\Vert_{L^{p}}^{2-p}\displaystyle\int_{\Omega}\vert\nabla y\vert^{p-2}\nabla y\nabla y^{\prime}dx \right)\Delta y\vspace{0.1cm}\\
 - \left(2\nu_{1}\Vert\nabla y_{m}\Vert_{L^{p}}^{2-p}\displaystyle\int_{\Omega}\vert\nabla y_{m}\vert^{p-2}\nabla y_{m}\nabla y^{\prime}dx \right)\Delta y_{m}\Vert_{L^{q}(\rho_{3}^{q}(0,T);L^{p}(\Omega))}\vspace{0.1cm}\\
 \left.+\,\Vert(y^{\prime}\cdot\nabla)(y_{m}-y)+((y_{m}-y)\cdot\nabla)
y^{\prime}\Vert_{L^{q}(\rho_{3}^{q}(0,T);L^{p}(\Omega))}\right)\vspace{0.1cm}\\
=C(\tilde{K}_{1}+\tilde{K}_{2}+\tilde{K}_{3}).
 \end{array}
\end{equation}
Since,
\begin{equation*}
    \begin{array}{c}
    \tilde{K}_{1}\leq C\left(\Vert\Vert\nabla (y-y_{m})\Vert_{L^{p}} \Vert\nabla y \Vert_{L^{p}}\Delta y^{\prime}\Vert_{L^{q}(\rho_{3}^{q}(0,T);L^{p}(\Omega))}\right.\\ 
   \left. +
     \Vert\Vert\nabla (y-y_{m})\Vert_{L^{p}}\Vert\nabla y_{m}\Vert_{L^{p}}\Delta y^{\prime}\Vert_{L^{q}(\rho_{3}^{q}(0,T);L^{p}(\Omega))}\right)
\end{array}
\end{equation*}
then, using the same arguments as \eqref{gradiente limitado em Lp}, we conclude that
\begin{equation*}
    \begin{array}{c}
  \tilde{K}_{1}\leq \chi_{1,m}\Vert(y^{\prime},P^{\prime},\theta^{\prime},v^{\prime},v_{0}^{\prime})\Vert_{ \mathcal{U}_{N}}     
    \end{array}
\end{equation*}
where
\begin{equation*}
\begin{array}{c}
\chi_{1,m}= C\Vert(y_m, P_m,\theta_m, v_m,{v_{0}}_{m})-(y, P,\theta, v,{v_{0}})\Vert_{\mathcal{U}_{N}}\left( \Vert(y, P,\theta, v,{v_{0}})\Vert_{\mathcal{U}_{N}}\right.\vspace{0.1cm}\\
\left.+\Vert(y_m, P_m,\theta_m, v_m,{v_{0}}_{m})\Vert_{\mathcal{U}_{N}}\right).
\end{array}
\end{equation*}

Now, adding and subtracing $\left(2\nu_{1}\Vert\nabla y\Vert^{2-p}_{L^{p}}\int_{\Omega}\vert\nabla y\vert^{p-2}\nabla y\nabla y^{\prime}dx\right)\Delta y_{m}$ in $\tilde{K}_{2}$, we have 
\begin{equation*}
    \begin{array}{ll}
\tilde{K}_{2} \leq \Vert\left(2\nu_{1}\Vert\nabla y\Vert^{2-p}_{L^{p}}\displaystyle\int_{\Omega}\vert\nabla y\vert^{p-2}\nabla y\nabla y^{\prime}dx\right)\Delta (y_{m}-y)\Vert\vspace{0.1cm}\\

+ \Vert 2\nu_{1}\left(\Vert\nabla y_{m}\Vert^{2-p}_{L^{p}}\displaystyle\int_{\Omega}\vert\nabla y_{m}\vert^{p-2}\nabla y_{m}\nabla y^{\prime}dx - \Vert\nabla y\Vert^{2-p}_{L^{p}}\displaystyle\int_{\Omega}\vert\nabla y\vert^{p-2}\nabla y\nabla y^{\prime} dx\right)\Delta y_{m}\Vert\\

= \tilde{K}_{2,1} + \tilde{K}_{2,2}.
    \end{array}
\end{equation*}
Using arguments similar to those applied in \eqref{I6 tilde}, we have
\begin{equation}\label{K2,1}
    \begin{array}{lll}
(\tilde{K}_{2,1})^{q} &\leq & C\left(\displaystyle\sup_{[0,T]}\Vert \nabla y\Vert^{q}_{L^{p}(\Omega)}\right)^{\hspace{-0.1cm}(2-p)} \hspace{-0.3cm}\displaystyle\int_{0}^{T}\hspace{-0.1cm}\left(\Vert \nabla y\Vert_{L^{p}(\Omega)}^{q(p-1)}\Vert\nabla y^{\prime}\Vert_{L^{p}(\Omega)}^{q}\right)\left(\displaystyle\int_{\Omega}\rho_{3}^{p}\vert\Delta(y_{m} -  y)\vert^{p}dx\right)^{q/p}\hspace{-0.4cm}dt 
\vspace{0.1cm}\\
& \leq & C \Vert \kappa y \Vert_{L^{\infty}(0,T;W^{1,p}(\Omega))}^{q}\Vert \kappa y^{\prime}\Vert^{q}_{L^{\infty}(0,T;W^{1,p}(\Omega))}\Vert \kappa(y_{m}-y)\Vert^{q}_{L^{q}(0,T;W^{2,p}(\Omega))}\vspace{0.1cm}\\
 &\leq & C \left(\Vert(y_m, P_m,\theta_m, v_m,{v_{0}}_{m})-(y, P,\theta, v,{v_{0}})\Vert^{q}_{\mathcal{U}_{N}} \Vert(y^{\prime}, P^{\prime},\theta^{\prime}, v^{\prime},{v_{0}^{\prime}})\Vert^{q}_{\mathcal{U}_{N}}\right.\vspace{0.1cm}\\
&\quad{}&\left.\Vert(y, P,\theta, v,{v_{0}})\Vert^{q}_{\mathcal{U}_{N}}\right).
    \end{array}
\end{equation}
And, adding and subtracing $2\nu_{1}\int_{\Omega}\frac{\vert\nabla y\vert^{p-2}}{\Vert\nabla y\Vert^{p-2}_{L^{p}}}\nabla y_{m}\nabla y^{\prime}dx\Delta y_{m}$ in $\tilde{K}_{2,2}$, we get
\begin{equation}\label{K2,2 tilde}
    \begin{array}{lll}
\tilde{K}_{2,2} &=& \Vert 2\nu_{1}\left[\displaystyle\int_{\Omega}\left(\dfrac{\vert\nabla y_{m}\vert^{p-2}}{\Vert\nabla y_{m}\Vert^{p-2}_{L^{p}}}-\dfrac{\vert\nabla y\vert^{p-2}}{\Vert\nabla y\Vert^{p-2}_{L^{p}}}\right)\nabla y_{m}\nabla y^{\prime}dx\right]\Delta y_{m}\Vert\vspace{0.1cm}\\

&\quad{} & +\,\Vert 2\nu_{1}\left(\displaystyle\int_{\Omega}\dfrac{\vert\nabla y\vert^{p-2}}{\Vert\nabla y\Vert^{p-2}_{L^{p}}}\nabla(y_{m}-y)\nabla y^{\prime}dx\right)\Delta y_{m} \Vert\vspace{0.1cm}\\

& = & \tilde{K}_{2,2}^{1} + \tilde{K}_{2,2}^{2}.
    \end{array}
\end{equation}
Let's analyze the integrals of \eqref{K2,2 tilde} separately. First, denote by $z_{m}=\frac{\vert\nabla y_{m}\vert}{\Vert\nabla y_{m}\Vert_{L^{p}}}$ and $z=\frac{\vert\nabla y\vert}{\Vert\nabla y\Vert_{L^{p}}}$. Applying in order Holder's inequality for $\frac{p-2}{p}+\frac{1}{p}+\frac{1}{p}=1$, the Mean Value Theorem and again Hölder for $\frac{1}{p-2}+\frac{p-3}{p-2}=1$, we obtain
\begin{equation*}
    \begin{array}{l}
\bullet \displaystyle\int_{\Omega}(z_{m}^{p-2}-z^{p-2})\nabla y_{m}\nabla y^{\prime}dx\vspace{0.1cm}\\

\leq \left(\displaystyle\int_{\Omega}(z_{m}^{p-2}-z^{p-2})^{p/(p-2)}dx\right)^{(p-2)/p}\Vert\nabla y_{m}\Vert_{L^{p}}\Vert\nabla y^{\prime}\Vert_{L^{p}}\vspace{0.1cm}\\

\leq \left[\displaystyle\int_{\Omega}\left((p-2)(\vert z\vert + \vert z_{m}\vert)^{p-3}\vert z_{m}-z\vert\right)^{p/(p-2)}dx\right]^{(p-2)/p}\Vert\nabla y_{m}\Vert_{L^{p}}\Vert\nabla y^{\prime}\Vert_{L^{p}}\vspace{0.1cm}\\

\leq C \left(\displaystyle\int_{\Omega}(\vert z\vert + \vert z_{m}\vert)^{\frac{(p-3)p}{p-2}}\vert z_{m}-z\vert^{p/(p-2)}dx\right)^{(p-2)/p}\Vert\nabla y_{m}\Vert_{L^{p}}\Vert\nabla y^{\prime}\Vert_{L^{p}}\vspace{0.1cm}\\

\leq C \left[ \left(\displaystyle\int_{\Omega}(\vert z\vert + \vert z_{m}\vert)^{p}dx\right)^{\frac{(p-3)}{p-2}}\left(\displaystyle\int_{\Omega}\vert z_{m}-z\vert^{p}dx\right)^{1/(p-2)}\right]^{(p-2)/p} \Vert\nabla y_{m}\Vert_{L^{p}}\Vert\nabla y^{\prime}\Vert_{L^{p}}\vspace{0.1cm}\\

\leq C \left(\displaystyle\int_{\Omega}(\vert z\vert + \vert z_{m}\vert)^{p}dx\right)^{\frac{(p-3)}{p}}\left(\displaystyle\int_{\Omega}\vert z_{m}-z\vert^{p}dx\right)^{1/p}\Vert\nabla y_{m}\Vert_{L^{p}}\Vert\nabla y^{\prime}\Vert_{L^{p}}\vspace{0.1cm}\\

\leq C (\Vert z\Vert_{L^{p}}+\Vert z_{m}\Vert_{L{p}})^{p-3}\Vert z_{m}-z \Vert_{L^{p}}\Vert\nabla y_{m}\Vert_{L^{p}}\Vert\nabla y^{\prime}\Vert_{L^{p}}\vspace{0.1cm}\\

\leq C \, 2^{p-3} \Vert z_{m}-z \Vert_{L^{p}}\Vert\nabla y_{m}\Vert_{L^{p}}\Vert\nabla y^{\prime}\Vert_{L^{p}}\vspace{0.1cm}\\

\leq C\left(\displaystyle\int_{\Omega}\left\vert \dfrac{\vert\nabla y_{m}\vert}{\Vert\nabla y_{m}\Vert_{L^{p}}} - \dfrac{\vert\nabla y\vert}{\Vert\nabla y_{m}\Vert_{L^{p}}} + \dfrac{\vert\nabla y\vert}{\Vert\nabla y_{m}\Vert_{L^{p}}} - \dfrac{\vert\nabla y\vert}{\Vert\nabla y\Vert_{L^{p}}} \right\vert^{p}dx\right)^{1/p} \Vert\nabla y_{m}\Vert_{L^{p}}\Vert\nabla y^{\prime}\Vert_{L^{p}}\vspace{0.1cm}\\

\leq C \left[\displaystyle\int_{\Omega}\left(\left\vert\dfrac{\nabla (y_{m}-y)}{\Vert\nabla y_{m}\Vert_{L^{p}}}\right\vert + \left\vert\dfrac{\vert\nabla y\vert(\Vert\nabla y\Vert_{L^{p}}-\Vert\nabla y_{m}\Vert_{L^{p}})}{\Vert\nabla y_{m}\Vert_{L^{p}}\Vert\nabla y\Vert_{L^{p}}}\right\vert\right)^{p}dx \right]^{1/p}\Vert\nabla y_{m}\Vert_{L^{p}}\Vert\nabla y^{\prime}\Vert_{L^{p}}\vspace{0.1cm}\\

\leq C \left(\dfrac{\Vert\nabla (y_{m}-y)\Vert_{L^{p}}}{\Vert\nabla y_{m}\Vert_{L^{p}}} + \dfrac{\Vert\nabla y\Vert_{L^{p}}\Vert\nabla (y_{m}-y)\Vert_{L^{p}}}{\Vert\nabla y_{m}\Vert_{L^{p}}\Vert\nabla y\Vert_{L^{p}}}\right)\Vert\nabla y_{m}\Vert_{L^{p}}\Vert\nabla y^{\prime}\Vert_{L^{p}}\vspace{0.1cm}\\

\leq C \Vert\nabla (y_{m}-y)\Vert_{L^{p}}\Vert\nabla y^{\prime}\Vert_{L^{p}}.
    \end{array}
\end{equation*}
Therefore,
\begin{equation}\label{primeiro de K2,2 tilde}
    \begin{array}{l}
         (\tilde{K}_{2,2}^{1})^{q}\leq C \displaystyle\int_{0}^{T}\Vert\nabla(y_{m}-y)\Vert^{q}_{L^{p}}\Vert\nabla y^{\prime}\Vert^{q}_{L^{p}}\left(\displaystyle\int_{\Omega}\rho_{3}^{p}\vert\Delta y_{m}\vert^{p}dx\right)^{q/p}\vspace{0.1cm}\\
         \leq C \Vert\kappa(y_{m}-y)\Vert^{q}_{L^{\infty}(0,T;W^{1,p}(\Omega))}\Vert\kappa y^{\prime}\Vert^{q}_{L^{\infty}(0,T;W^{1,p}(\Omega))}\Vert\kappa y_{m}\Vert^{q}_{L^{q}(0,T;W^{2,p}(\Omega))}\vspace{0.1cm}\\
         \leq C \left(\Vert(y_m, P_m,\theta_m, v_m,{v_{0}}_{m})-(y, P,\theta, v,{v_{0}})\Vert^{q}_{\mathcal{U}_{N}} \Vert(y^{\prime}, P^{\prime},\theta^{\prime}, v^{\prime},{v_{0}^{\prime}})\Vert^{q}_{\mathcal{U}_{N}}\right.\vspace{0.1cm}\\
\left.\Vert(y_{m}, P_{m},\theta_{m}, v_{m},{v_{0m}})\Vert^{q}_{\mathcal{U}_{N}}\right).
    \end{array}
\end{equation}

And, again using Holder's inequality for $\frac{p-2}{p}+\frac{1}{p}+\frac{1}{p}=1$
\begin{equation*}
    \begin{array}{lll}
      \bullet \displaystyle\int_{\Omega}\dfrac{\vert\nabla y\vert^{p-2}}{\Vert\nabla y\Vert^{p-2}_{L^{p}}}\nabla(y_{m}-y)\nabla y^{\prime}dx
      &\leq& C \left(\displaystyle\int_{\Omega}\dfrac{\vert\nabla y\vert_{p}}{\Vert\nabla y\Vert^{p}_{L^{p}}}dx\right)^{(p-2)/p}\Vert\nabla (y_{m}-y)\Vert_{L^{p}}\Vert\nabla y^{\prime}\Vert_{L^{p}}\vspace{0.1cm}\\
      &\leq &C \Vert\nabla (y_{m}-y)\Vert_{L^{p}}\Vert\nabla y^{\prime}\Vert_{L^{p}}.
    \end{array}
\end{equation*}
Then,
\begin{equation}\label{segundo de K2,2 tilde}
    \begin{array}{l}
         (\tilde{K}_{2,2}^{2})^{q}
         \leq C \left(\Vert(y_m, P_m,\theta_m, v_m,{v_{0}}_{m})-(y, P,\theta, v,{v_{0}})\Vert^{q}_{\mathcal{U}_{N}} \Vert(y^{\prime}, P^{\prime},\theta^{\prime}, v^{\prime},{v_{0}^{\prime}})\Vert^{q}_{\mathcal{U}_{N}}\right.\vspace{0.1cm}\\
\left.\Vert(y, P,\theta, v,{v_{0}})\Vert^{q}_{\mathcal{U}_{N}}\right).
    \end{array}
\end{equation}

From \eqref{primeiro de K2,2 tilde} and \eqref{segundo de K2,2 tilde} in \eqref{K2,2 tilde}, we conclude that
\begin{equation}\label{K2,2 tilde 2}
\begin{array}{l}
    \tilde{K}_{2,2}\leq C \left(\Vert(y_m, P_m,\theta_m, v_m,{v_{0}}_{m})-(y, P,\theta, v,{v_{0}})\Vert_{\mathcal{U}_{N}} \Vert(y^{\prime}, P^{\prime},\theta^{\prime}, v^{\prime},{v_{0}^{\prime}})\Vert_{\mathcal{U}_{N}}\right.\vspace{0.1cm}\\
\left.\Vert(y_{m}, P_{m},\theta_{m}, v_{m},{v_{0m}})\Vert_{\mathcal{U}_{N}}\right).
\end{array}
\end{equation}
And as a consequence of \eqref{K2,1} and \eqref{K2,2 tilde 2}
\begin{equation*}
    \begin{array}{l}
       \tilde{K}_{2}\leq  \chi_{2,m} \Vert(y^{\prime}, P^{\prime},\theta^{\prime}, v^{\prime},{v_{0}^{\prime}})\Vert_{\mathcal{U}_{N}},
    \end{array}
\end{equation*}
with
$$
\chi_{2,m}  = C\Vert(y_m, P_m,\theta_m, v_m,{v_{0}}_{m})-(y, P,\theta, v,{v_{0}})\Vert_{\mathcal{U}_{N}}\Vert(y_{m}, P_{m},\theta_{m}, v_{m},{v_{0m}})\Vert_{\mathcal{U}_{N}}.
$$
Moreover, by the same reasoning as \eqref{y grad y limitado em Lp},
\begin{equation*}
    \begin{array}{lll}
      \tilde{K}_{3}& \leq& C\left(\Vert(y^{\prime}\cdot\nabla)(y_{m}-y)\Vert_{L^{q}(\rho_{3}^{q}(0,T);L^{p}(\Omega))}  + \Vert((y_{m}-y)\cdot\nabla)
y^{\prime}\Vert_{L^{q}(\rho_{3}^{q}(0,T);L^{p}(\Omega))}\right)   \vspace{0.1cm}\\

&\leq &\chi_{3,m}\Vert(y^{\prime},P^{\prime},\theta^{\prime},v^{\prime},v_{0}^{\prime})\Vert_{\mathcal{U}_{N}},
    \end{array}
\end{equation*}
with
\begin{equation*}
\chi_{3,m}=C\Vert(y_m, P_m,\theta_m, v_m,{v_{0}}_{m})-(y, P,\theta, v,{v_{0}})\Vert_{\mathcal{U}_{N}}.
\end{equation*}

It is easy to check that $\mathbb{D}_{j,m}$ for $j=2$ and $j=4$ satisfy similar inequalities.

Again, all inequality norms below are norms in $L^{q}(\rho_{3}^{q}(0,T);L^{p}(\Omega))$, we will omit them for simplicity. For $\mathbb{D}_{3,m}$ after some manipulations we get the following:
\begin{equation*}
    \begin{array}{l}
\bullet\,\,\, \Vert\mathbb{D}_{3,m}(y^{\prime},P^{\prime},\theta^{\prime},v^{\prime},v_{0}^{\prime})\Vert\leq C\left[ \Vert\, \Vert\nabla(y_{m}-y)\Vert_{L^{p}}\,\Vert\nabla y\Vert_{L^{p}}\, \Delta\theta^{\prime}\,\Vert\right.\vspace{0.1cm}\\ 
+ \Vert\,\Vert\nabla(y_{m}-y)\Vert_{L^{p}}\,\Vert\nabla y_{m}\Vert_{L^{p}}\Delta\theta^{\prime}\,\Vert + \Vert\left(2\nu_{1}\Vert\nabla \theta\Vert^{2-p}_{L^{p}}\int_{\Omega}\vert\nabla \theta\vert^{p-2}\nabla \theta\nabla \theta^{\prime}dx\right)\Delta(\theta_{m}-\theta)\Vert\vspace{0.1cm}\\
   +\Vert 2\nu_{1}\left(\Vert\nabla \theta_{m}\Vert^{2-p}_{L^{p}}\int_{\Omega}\vert\nabla \theta_{m}\vert^{p-2}\nabla \theta_{m}\nabla\theta^{\prime}dx- \Vert\nabla\theta\Vert^{2-p}_{L^{p}}\int_{\Omega}\vert\nabla \theta\vert^{p-2}\nabla\theta\nabla \theta^{\prime}dx\right)\Delta\theta_{m}\Vert \vspace{0.1cm}\\
       + \Vert(\bar{\nu}(\nabla y_{m})D(y_{m}-y):\nabla y^{\prime}\Vert + 
    \Vert\nu_{1}(\Vert\nabla y_{m}\Vert_{L^{p}}^{2}-\Vert\nabla y\Vert_{L^{p}}^{2})Dy:\nabla y^{\prime}\Vert\vspace{0.1cm}\\
    
    + \Vert\bar{\nu}(\nabla y_{m})D y^{\prime}:\nabla (y_{m}-y)\Vert
       \,+ \Vert \nu_{1}(\Vert\nabla y_{m}\Vert_{L^{p}}^{2}-\Vert\nabla y\Vert_{L^{p}}^{2})D y^{\prime}:\nabla y \Vert \vspace{0.1cm}\\
       
       +\Vert \left(2\nu_{1}\Vert\nabla y_{m}\Vert_{L^{p}}^{2-p}\int_{\Omega}\vert\nabla y_{m}\vert^{p-2}\nabla y_{m}\nabla y^{\prime}dx\right)Dy_{m}:\nabla(y_{m}-y)\Vert \vspace{0.1cm}\\

       +\Vert \left(2\nu_{1}\Vert\nabla y_{m}\Vert_{L^{p}}^{2-p}\int_{\Omega}\vert\nabla y_{m}\vert^{p-2}\nabla y_{m}\nabla y^{\prime}dx\right)D(y_{m}-y):\nabla y\Vert \vspace{0.1cm}\\

+ \Vert2\nu_{1}\left(\Vert\nabla y_{m}\Vert_{L^{p}}^{2-p}\int_{\Omega}\vert\nabla y_{m}\vert^{p-2}\nabla y_{m}\nabla y^{\prime}dx - \Vert\nabla y\Vert^{2-p}_{L^{p}}\int_{\Omega}\vert\nabla y\vert^{p-2}\nabla y\nabla y^{\prime}dx \right)Dy:\nabla y\Vert\vspace{0.1cm}\\
       
      \,\left. +\Vert y^{\prime}\cdot\nabla (\theta_{m}-\theta)\Vert+\Vert(y_{m}-y)\cdot\nabla\theta^{\prime}\Vert\right]  = C\displaystyle\sum_{s=4}^{16}\tilde{K}_{s}. 
       \end{array}
\end{equation*}
Applying arguments similar to those used in Lemma \ref{F bem definido extra} and in \eqref{D1,m em Lq} we can conclude that $\tilde{K}_{s}\leq \chi_{s,m}$ for $k = \lbrace 4, 5, \ldots, 16\rbrace$. Indeed, let us evaluate $\tilde{K}_{14}$, from the calculations performed for \eqref{K2,2 tilde} we have
\begin{equation*}
    \begin{array}{lll}
(\tilde{K}_{14})^{q} \leq C \displaystyle\int_{0}^{T}\Vert\nabla(y_{m}-y)\Vert^{q}_{L^{p}}\Vert\nabla y^{\prime}\Vert^{q}_{L^{p}}\left(\displaystyle\int_{\Omega}\rho_{3}^{p}\vert\nabla y\vert^{2p}dx\right)^{q/p}\vspace{0.1cm}\\
         \leq C \Vert\kappa(y_{m}-y)\Vert^{q}_{L^{\infty}(0,T;W^{1,p}(\Omega))}\Vert\kappa y^{\prime}\Vert^{q}_{L^{\infty}(0,T;W^{1,p}(\Omega))}\Vert\kappa y\Vert^{q}_{L^{\infty}(0,T;W^{1,p}(\Omega))}\Vert\kappa y\Vert^{q}_{L^{q}(0,T;W^{2,p}(\Omega))}\vspace{0.1cm}\\
         \leq C \left(\Vert(y_m, P_m,\theta_m, v_m,{v_{0}}_{m})-(y, P,\theta, v,{v_{0}})\Vert^{q}_{\mathcal{U}_{N}} \Vert(y^{\prime}, P^{\prime},\theta^{\prime}, v^{\prime},{v_{0}^{\prime}})\Vert^{q}_{\mathcal{U}_{N}}\right.
\left.\Vert(y, P,\theta, v,{v_{0}})\Vert^{2q}_{\mathcal{U}_{N}}\right).
    \end{array}
\end{equation*}
Thus 
\begin{equation*}
    \tilde{K}_{14}\leq \chi_{14,m}\Vert(y^{\prime}, P^{\prime},\theta^{\prime}, v^{\prime},{v_{0}^{\prime}})\Vert_{\mathcal{U}_{N}},
\end{equation*}
with 
$$
\chi_{14,m} = C \Vert(y_m, P_m,\theta_m, v_m,{v_{0}}_{m})-(y, P,\theta, v,{v_{0}})\Vert_{\mathcal{U}_{N}} \Vert(y, P,\theta, v,{v_{0}})\Vert^{2}_{\mathcal{U}_{N}}.
$$

Thus, we have $\lim\limits_{m\to\infty}\chi_{s,m}=0$ for all $s\in\lbrace 1,...,16\rbrace$ and consequently \eqref{convergencia da derivada de I} is obtained. This ends the proof.
\end{proof}

\begin{lem}\label{Mapa sobrejetivo extra}
Let ${\mathcal{I}}$ be the mapping in (\ref{Mapa F extra})-(\ref{Mapas Fi extras}). Then, ${\mathcal{I}}^{\prime}(0,0,0,0,0)$ is onto.
\end{lem}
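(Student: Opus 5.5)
Since every nonlinear term in \eqref{Mapas Fi extras} vanishes to first order at the origin, the derivative at zero is the linear operator. Setting $(y,P,\theta,v,v_{0})=0$ in \eqref{DIi}, we get $\bar{\nu}(0)=\nu_{0}$, and all remaining terms (the transport terms and the terms multiplied by $\Delta y$, $\Delta\theta$, $Dy$ or $\nabla y$) drop out. Hence
\[
\mathcal{I}^{\prime}(0,0,0,0,0)(y,P,\theta,v,v_{0})=\left(\mathcal{L}_{1}y+\nabla P-\nu_{0}\theta e_{N}-v\tilde{1}_{\omega},\; y(.,0),\; \mathcal{L}_{2}\theta-v_{0}\tilde{1}_{\omega},\; \theta(.,0)\right).
\]
The plan is to fix an arbitrary $(F_{1},y^{0},F_{2},\theta^{0})\in\mathcal{R}_{N}$ and produce a preimage in $\mathcal{U}_{N}$ by solving the linear null controllability problem \eqref{lad.Bous.Linear} with these data.

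\textbf{Step 1: construction.} Membership in $\mathcal{R}_{N}$ means $\rho_{3}F_{1}\in L^{q}(0,T;L^{p}(\Omega)^{N})$, $\rho_{3}F_{2}\in L^{q}(0,T;L^{p}(\Omega))$ and $(y^{0},\theta^{0})\in V^{p}\times W^{1,p}_{0}(\Omega)$, which are exactly the hypotheses of Proposition \ref{proposição controle nulo do sistema linear para W{1,p}}. That proposition (built on Proposition \ref{proposição 2.1}, i.e. the Lax--Milgram construction \eqref{definição de y, theta, v, v0}) gives a state-control $(y,P,\theta,v,v_{0})$ solving \eqref{lad.Bous.Linear}. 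This solution satisfies \eqref{2.8 de CaraLimacoNany}, so $\rho_{1}y,\rho_{1}\theta\in L^{2}(Q)$ and $\rho_{2}v,\rho_{2}v_{0}\in L^{2}(\omega\times(0,T))$. By construction, $F_{1}=\mathcal{L}_{1}y+\nabla P-\nu_{0}\theta e_{N}-v\tilde{1}_{\omega}$ and $F_{2}=\mathcal{L}_{2}\theta-v_{0}\tilde{1}_{\omega}$, and $y(.,0)=y^{0}$, $\theta(.,0)=\theta^{0}$. It therefore only remains to show that the tuple lies in $\mathcal{U}_{N}$.

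\textbf{Step 2: membership in $\mathcal{U}_{N}$.} The required memberships come from the earlier results as follows.
\begin{itemize}
\item[(a)] The regularity $y,\theta\in L^{q}(0,T;W^{2,p}(\Omega))$ is part of Proposition \ref{proposição controle nulo do sistema linear para W{1,p}}.
\item[(b)] Lemma \ref{Stokes para Lp} gives $\nabla P\in L^{q}(0,T;L^{p}(\Omega)^{N})$. After normalizing the pressure (for instance zero mean), Poincar\'e--Wirtinger then gives $P\in L^{q}(0,T;L^{p}(\Omega))$.
\item[(c)] $\nabla y\in L^{2}(Q)$ follows from Proposition \ref{proposição 2.2}, or simply from $y\in L^{q}(0,T;W^{2,p})$.
\item[(d)] The weighted control regularity $(\kappa v)_{t},\kappa\Delta v,(\kappa v_{0})_{t},\kappa\Delta v_{0}\in L^{2}(\omega\times(0,T))$ is exactly \eqref{regularidade do controle v}--\eqref{regularidade do controle v0} of Proposition \ref{proposição extra}.
\item[(e)] Divergence-freeness, the boundary conditions and the initial data are inherited from the construction.
\end{itemize}
Each norm term of $\Vert\cdot\Vert_{\mathcal{U}_{N}}$ is then controlled by the norm of the data, using Proposition \ref{proposição extra} and Proposition \ref{Proposicao com rhobarratheta}, although only finiteness is needed for surjectivity.

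\textbf{Main obstacle.} The delicate point is checking that the unweighted memberships are truly justified. In particular, the control regularity in (d) is stated on $\omega$ with $\tilde{1}_{\omega}$ smooth. One must verify that $\kappa v=-\tilde{1}_{\omega}\kappa\rho_{2}^{-2}\varphi$ inherits $H^{2}$-in-space regularity from $\tilde{u}$, which holds because $\tilde{1}_{\omega}\in C_{0}^{\infty}$. One must also confirm that the pressure obtained in Lemma \ref{Stokes para Lp} coincides with the one arising from the Lax--Milgram solution. I would handle the latter by uniqueness of weak Stokes solutions: the velocity is unique, so the pressure is unique up to a function of $t$, which is fixed by the normalization. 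With these points settled, $\mathcal{I}^{\prime}(0)$ maps this tuple onto the prescribed data, and surjectivity follows.
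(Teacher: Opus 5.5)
Your argument is the paper's argument. Its proof of this lemma consists only of observing that, for $(F_{1},y^{0},F_{2},\theta^{0})\in\mathcal{R}_{N}$, Proposition~\ref{proposição controle nulo do sistema linear para W{1,p}} supplies the preimage, and your Steps 1--2 unpack that observation. Recovering $P\in L^{q}(0,T;L^{p}(\Omega))$ from $\nabla P$ by Poincar\'e--Wirtinger, and invoking Proposition~\ref{proposição extra} for the weighted control terms, are details the paper leaves implicit. However, the real obstacle is not the one you single out in your last paragraph. It is step (a), and step (a) fails, in your write-up and in the paper's alike.

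\textbf{The trace obstruction.} Since $q>p>3$, you have $q>2$. The time trace at $t=0$ of $W^{1,q}(0,T;L^{p}(\Omega))\cap L^{q}(0,T;W^{2,p}(\Omega))$ is the interpolation space $(L^{p},W^{2,p})_{1-1/q,q}=B^{2-2/q}_{p,q}(\Omega)$, with the boundary condition. Because $2-2/q>1$, this space is strictly smaller than $W^{1,p}(\Omega)$. Consequently:
\begin{itemize}
\item Lemmas~\ref{parabolic in LpLq} and~\ref{Stokes para Lp} are false for $r>2$ (respectively $p_{1}>2$) when the initial data lie merely in $W^{1,p}$.
\item The regularity $y,\theta\in L^{q}(0,T;W^{2,p}(\Omega))$ asserted in Proposition~\ref{proposição controle nulo do sistema linear para W{1,p}} is not available for general $(y^{0},\theta^{0})\in V^{p}\times W^{1,p}_{0}(\Omega)$.
\end{itemize}

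\textbf{Why no other construction helps.} Suppose $\mathcal{I}^{\prime}(0)$ maps some $(y,P,\theta,v,v_{0})\in\mathcal{U}_{N}$ to $(0,0,0,\theta^{0})$. Then $F_{2}=0$, and since $\tilde{1}_{\omega}=0$ off $\omega$, on any open $U\Subset\Omega\setminus\overline{\omega}$ you get $\theta_{t}=\nu_{0}\Delta\theta$. Because $\theta\in L^{q}(0,T;W^{2,p}(\Omega))$, this gives $\theta_{t}\in L^{q}(0,T;L^{p}(U))$. After a cutoff, the trace theorem then forces $\theta^{0}$ to be locally in $B^{2-2/q}_{p,q}$ on $U$. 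Now take $\theta^{0}\in W^{1,p}_{0}(\Omega)$ supported in $U$ without this regularity, e.g.\ a cut-off power $\vert x-x_{0}\vert^{\gamma}$ with $1<\gamma+N/p\leq 2-2/q$. Such a $\theta^{0}$ has no preimage, so $\mathcal{I}^{\prime}(0)$ is not onto $\mathcal{R}_{N}$ as defined, and the statement itself is false.

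\textbf{How to repair it.} Replace $V^{p}$ and $W^{1,p}_{0}(\Omega)$ in $\mathcal{R}_{N}$, and the corresponding conditions on $y(\cdot,0)$ and $\theta(\cdot,0)$ in $\mathcal{U}_{N}$, by the matching trace spaces. These are the divergence-free (respectively scalar) elements of $B^{2-2/q}_{p,q}(\Omega)$ vanishing on $\partial\Omega$. With that change, maximal $L^{q}$--$L^{p}$ regularity does give the memberships you check in Step 2. The price is that the smallness in Theorem~\ref{Teo extra control nulo} must then be measured in these Besov norms.
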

\begin{proof}
 Let \((F_{1}, y^{0}, F_{2}, \theta^{0}) \in \mathcal{R}_{N}\). Then, the result follows directly from Proposition \ref{proposição controle nulo do sistema linear para W{1,p}}.

\end{proof}

\noindent\textbf{Proof of Theorem $\ref{Teo extra control nulo}$} According to Lemmas \ref{F bem definido extra}--\ref{Mapa sobrejetivo extra}, we can apply the Inverse Mapping Theorem (Theorem \ref{Liusternik}), then, there exists $\delta > 0$ and a mapping $W:B_{\delta}(0)\subset {\mathcal{R}}_{N}\rightarrow {\mathcal{U}}_{N}$ such that
\begin{equation*}
    W(z)\in B_{r}(0)\,\,\, \text{and}\,\,\, {\mathcal{I}}(W(z))=z, \,\,\, \forall z\in B_{\delta}(0).
\end{equation*}
Taking $(0,y^{0},0,\theta^{0})\in B_{\delta}(0)$ and $(y,P,\theta,v,v_{0})=W(0,y^{0},0,\theta^{0})\in {\mathcal{U}}_{N}$, we have
\begin{equation*}
    {\mathcal{I}}(y,P,\theta,v,v_{0})=(0,y^{0},0,\theta^{0}).
\end{equation*}
Thus, we conclude that \eqref{lad.boussinesq caso extra} is locally null controllable at time $T > 0$.

\section{Large time null-controllability}\label{Sec.4}

This section is dedicated to the proof of Theorem \ref{Teo large time}. Following the same approach as \cite{CarvalhoDemarqueLimaco, Kevin}, we first analyze the uncontrolled evolution of the system \eqref{lad.boussinesq} and establish the asymptotic behavior of its solutions as $t \to \infty$, specifically in the case $N = 2$. In particular, we examine the energy decay associated with the solutions of the complete Ladyzhenskaya-Boussinesq system. Based on this long-time analysis, we identify a time $T^* > 0$ such that the states $y(\cdot,T^*)$ and $\theta(\cdot,T^*)$ can be employed as initial data in the study of the null local controllability of \eqref{lad.boussinesq} (as stated in Theorem \ref{Teo principal control nulo}). Consequently, using Theorem \ref{Teo principal control nulo}, we obtain control functions $v$ and $v_0$ that steer the system’s solutions to zero within a sufficiently large time interval.

Accordingly we state the following lemma, which will be fundamental for the demonstration of Theorem \ref{Teo large time}.
\begin{lem}\label{energy decay}
For $N=2$, any $T>0$ and $(y^{0},\theta^{0})\in V\times H^{1}_{0}(\Omega)$, if there is positive constant $r>0$ such that \begin{equation*}
    \Vert (y^{0},\theta^{0})\Vert_{V\times H_{0}^{1}(\Omega)} < r
\end{equation*}
and $(y,p,\theta)$ is a solution of \eqref{lad.boussinesq} with $v\equiv v_{0}\equiv 0$, so this solution  has asymptotic behavior as $t\to\infty$. More precisely, for $${E}(t):=\Vert\nabla y(.,t)\Vert^{2} + \Vert\theta(.,t)\Vert^{2} + \Vert\nabla\theta(.,t)\Vert^{2} $$ there are positive constants $\mathrm{C}_{1}, \mathrm{C}_{2}$ such that
\begin{equation}\label{decaimento}
\begin{array}{c}
E(t)\leq  \mathrm{C}_{2}\, e^{-\mathrm{C}_{1}t}E(0) \,\text{a.e in}\, (0,T).
\end{array}
\end{equation}
\end{lem}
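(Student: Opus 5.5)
We will derive a closed differential inequality for $E$ by energy estimates on the uncontrolled system ($v\equiv v_0\equiv 0$), use the smallness of the data to absorb all nonlinear terms, and conclude with Gronwall. Throughout, $N=2$. We use the strong solution from Theorem \ref{existence and uniq.} (Appendix), with the regularity \eqref{regularidade das sol.} upgraded to $\theta\in L^2(0,T;H^2(\Omega))$ because $\theta^0\in H^1_0(\Omega)$. We also use the Poincaré inequality $\lambda_1\Vert u\Vert^2\le\Vert\nabla u\Vert^2$ and Ladyzhenskaya's inequality $\Vert u\Vert_{L^4}^2\le C\Vert u\Vert\,\Vert\nabla u\Vert$.

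\textbf{Velocity.} Take $-\Delta y$ (more precisely $Ay$) as test function in the momentum equation. Since $\nu(\nabla y)\ge\nu_0$, this gives
\begin{equation*}
\frac12\frac{d}{dt}\Vert\nabla y\Vert^2+\nu_0\Vert Ay\Vert^2\le |((y\cdot\nabla)y,Ay)|+\nu_0|(\theta e_2,Ay)|.
\end{equation*}
In 2D, $|((y\cdot\nabla)y,Ay)|\le C\Vert y\Vert^{1/2}\Vert\nabla y\Vert\,\Vert Ay\Vert^{3/2}\le \frac{\nu_0}{4}\Vert Ay\Vert^2+C\Vert\nabla y\Vert^{6}$, using Poincaré for $\Vert y\Vert$. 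The buoyancy term is bounded by $\frac{\nu_0}{4}\Vert Ay\Vert^2+C\Vert\theta\Vert^2$.

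\textbf{Temperature.} Test the heat equation with $\theta$ and with $-\Delta\theta$. The convective term vanishes in the $\theta$-estimate, and $|(y\cdot\nabla\theta,\Delta\theta)|\le \epsilon\Vert\Delta\theta\Vert^2+C\Vert\nabla y\Vert^2\Vert\nabla\theta\Vert\,\Vert\Delta\theta\Vert$, which is handled similarly. The dissipation source $\nu(\nabla y)Dy:\nabla y$ is estimated using $\Vert\nabla y\Vert_{L^4}^2\le C\Vert\nabla y\Vert\,\Vert Ay\Vert$:
\begin{equation*}
|(\nu(\nabla y)Dy:\nabla y,\theta)|\le C(1+\Vert\nabla y\Vert^2)\Vert\nabla y\Vert\,\Vert Ay\Vert\,\Vert\nabla\theta\Vert ,
\end{equation*}
and by $|(\nu(\nabla y)Dy:\nabla y,\Delta\theta)|\le C(1+\Vert\nabla y\Vert^2)\Vert\nabla y\Vert\,\Vert Ay\Vert\,\Vert\Delta\theta\Vert$ in the second estimate. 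Since these sources are quadratic in $y$, the $\theta$-estimates cannot be closed on their own. Instead we add to the $\theta$-inequalities a large multiple $M$ of the $y$-inequality, so that the damping $M\nu_0\Vert Ay\Vert^2$ dominates. Young's inequality then bounds the source terms by $\frac{\nu_0}{4}\Vert\Delta\theta\Vert^2+C(1+E)^2E\,\Vert Ay\Vert^2$, which can be absorbed into $M\nu_0\Vert Ay\Vert^2$ provided $E$ is small.

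\textbf{Closing.} For the weighted energy $\tilde E:=M\Vert\nabla y\Vert^2+\Vert\theta\Vert^2+\Vert\nabla\theta\Vert^2$, which is equivalent to $E$, we therefore expect
\begin{equation*}
\frac{d}{dt}\tilde E+c\,(\Vert Ay\Vert^2+\Vert\nabla\theta\Vert^2+\Vert\Delta\theta\Vert^2)\le C\,g(\tilde E)\,(\Vert Ay\Vert^2+\Vert\Delta\theta\Vert^2),
\end{equation*}
with $g$ continuous and $g(0)=0$. Here $g$ collects factors such as $\tilde E^{2}$, $\tilde E$ and $(1+\tilde E)^2\tilde E$. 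The one delicate point is the buoyancy term $C\Vert\theta\Vert^2$ in the velocity estimate, which after multiplication by $M$ becomes $CM\Vert\theta\Vert^2$. It is not small, so it cannot be absorbed by smallness. We handle it by weighting the two $\theta$-estimates by a further constant $L\gg M$, so that $L\nu_0\Vert\nabla\theta\Vert^2\ge L\nu_0\lambda_1\Vert\theta\Vert^2$ controls it. This works because the cross effect runs only one way: $\theta$ forces $y$ linearly, while $y$ forces $\theta$ only quadratically, so the linear part is triangular and the weighting closes.

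\textbf{Continuity argument and conclusion.} Choose $r$ so that $C g(\tilde E(0))<c/2$. A bootstrap argument then shows that $\tilde E$ is nonincreasing on $[0,T]$, hence the smallness persists for all $t$. Poincaré gives $\Vert Ay\Vert^2\ge\lambda_1\Vert\nabla y\Vert^2$ and $\Vert\Delta\theta\Vert^2\ge\lambda_1\Vert\nabla\theta\Vert^2$, which yields $\tilde E'+C_1\tilde E\le 0$. Gronwall's inequality and the equivalence of $\tilde E$ and $E$ give \eqref{decaimento}. I expect the main obstacle to be the quartic dissipative source $\nu_1\Vert\nabla y\Vert^2Dy:\nabla y$ tested against $\Delta\theta$: it is exactly what forces the weighted combination and the requirement that $Ay$ be controlled in $L^2(0,T;L^2)$ uniformly in $T$. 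If the $\Delta\theta$ test turns out too costly, I would fall back to testing with $\theta$ and $\theta_t$ instead.
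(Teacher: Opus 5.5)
Your proposal is correct and follows essentially the same route as the paper. Both combine the $-\Delta y$ (i.e.\ $Ay$) velocity estimate with the $\theta$- and $-\Delta\theta$-tested temperature estimates, using relative weights so that the linear buoyancy coupling is absorbed (the paper multiplies the velocity equation by $\lambda_1$; you use the $M$, $L$ weights). In both, the smallness of $\Vert\nabla y\Vert$ is propagated by a continuity argument, and Poincaré plus Gronwall then give the exponential decay. The only difference is that the paper performs these computations on the Galerkin approximations, which avoids having to justify the upgraded regularity $\theta\in L^2(0,T;H^2(\Omega))$ that you assert for the strong solution.
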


For the convenience of readers, we will give the proof for inequality \eqref{decaimento} in Lemma \ref{energy decay} in Appendix \ref{Appendix}.

\textbf{Proof of Theorem \ref{Teo large time}.}
First, let's fix $T_{0}>0$. Applying the Theorem \ref{Teo principal control nulo} there exists $\delta >0$ such that the system \eqref{lad.boussinesq}, with any initial data $(\bar{y}^{0},\bar{\theta}^{0})\in V\times W_{0}^{1,3/2}(\Omega)$ satisfying $\Vert(\bar{y}^{0},\bar{\theta}^{0})\Vert_{V\times W^{1,3/2}_{0}(\Omega)}<\delta$, is locally null controllable at $T_{0}$.

Determine  $(y^{0},\theta^{0})\in V\times H^{1}_{0}(\Omega)$ and consider $r>0$ as defined in the statement of Lemma \ref{energy decay}. Let then $T^{\ast}$ be a positive time satisfying 
\begin{equation}\label{T ast}
T^{\ast}>\dfrac{-1}{C_{1}}\ln\left(\dfrac{\delta}{\mathrm{C}_{2}(\Vert\nabla y^{0}\Vert^{2} + \Vert\theta^{0}\Vert^{2} + \Vert\nabla\theta^{0}\Vert^{2})}\right)
\end{equation}
 and consider a solution $(y,p,\theta)$ of the system \eqref{lad.boussinesq}, with $T=T^{\ast} + T_{0}$, $v\equiv v_{0}\equiv 0$ and $(y^{0},\theta^{0})$ as the initial data.

From $\eqref{decaimento}$ and \eqref{T ast}, $y(.,T^{\ast}),\, \theta(.,T^{\ast})$ are such that
\[
\Vert\left(y(.,T^{\ast}),\theta(.,T^{\ast})\right)\Vert_{V\times W^{1,3/2}_{0}(\Omega)}\leq \mathrm{C}_{2}\, e^{-\mathrm{C}_{1}T^{\ast}}(\Vert\nabla y^{0}\Vert^{2} + \Vert\theta^{0}\Vert^{2} + \Vert\nabla\theta^{0}\Vert^{2}) <\delta.
\]
Consequently, by Theorem \ref{Teo principal control nulo},  \eqref{lad.boussinesq} is null controllable at $T^{\ast}+T_{0}$.

\section{Some additional comments and  open problems}\label{Comentarios Adicionais}
In this section, we provide some remarks on the systems studied and also highlight several problems within the addressed context that, to the best of our knowledge, remain open.

\begin{itemize}
    \item Initially, note that \eqref{lad.boussinesq caso extra} can be solved with the same techniques by taking $\bar{\nu}(\nabla\varsigma):= \nu_{0}+\nu_{1}\Vert\nabla\varsigma\Vert^{2}_{L^{2}}$.
    \item Furthermore, for our systems \eqref{lad.boussinesq} and \eqref{lad.boussinesq caso extra} it is also possible to obtain the local null controllability with control at the border $\Gamma_{0}\times (0,T)$, where $\Gamma_{0}\subset\partial\Omega$. Indeed, just construct a domain $\hat{\Omega}$ with boundary $\partial\hat{\Omega}$ sufficiently regular via a subset $\mathrm{U}$ of $\mathbb{R}^{N}$ such that $\hat{\Omega}=\Omega\cup \mathrm{U}$ and $\bar{\mathrm{U}}\cap (\overline{\partial\Omega-\Gamma_{0}})=\emptyset$. So, taking $\omega\subset\hat{\Omega}-\bar{\Omega}$ and keeping in mind the controllability result for distributed controls, the control at the boundary is obtained by considering the constraint trace in $\Omega\times (0,T)$ of the state of the distributed control system. That is, since $z(x,t)$ is the solution in $\hat{\Omega}\times (0,T)$ of the distributed control system then 
\begin{equation*}
u=\gamma(z\mid_{\Omega\times (0,T)}) = \left\{\begin{array}{lll}
     \gamma(z) &\text{in}& \Gamma_{0}, \\
     0 &\text{in}& \partial\Omega - \Gamma_{0} 
\end{array}\right.
\end{equation*}
is the control on the desired boundary, where $\gamma: H^{1}(\Omega)\longrightarrow H^{1/2}(\partial\Omega)$.
\end{itemize}

Now, we comment on some open questions that arise naturally in the context of our results.
\begin{enumerate}
    \item Is it possible the local exact controllability to the trajectories for the systems \eqref{lad.boussinesq} and \eqref{lad.boussinesq caso extra}? The main difficulty for this problem is finding a suitable Carleman estimate.
    
    \item Is it possible the local null controllability to \eqref{lad.boussinesq} when $N\geq 4$? This is a very difficult question, because in the proof of Lemma \ref{F bem definido} we use the immersion $H^{2}(\Omega)\hookrightarrow L^{\infty}(\Omega)$ and this is only valid when $N\leq3$.
    \item  Finally, can we deduce the null controllability of \eqref{lad.boussinesq} and \eqref{lad.boussinesq caso extra} in $N$ dimensions, with
$N - 1$ controls? 
\end{enumerate}

\vspace{0.1cm}

\textbf{Acknowledgments}
\noindent This study was financed in part by the Coordena\c{c}\~ao de Aperfei\c{c}oamento de Pessoal de N\'ivel Superior-Brasil (CAPES) and the second author also has support from CNPQ-Brasil.

\section*{Declarations}
\textbf{Conflict of Interest.} The corresponding author hereby affirms, on behalf of all authors involved in this work, that there are no conflicts of interest to disclose.\\

\noindent\textbf{Data Availability Statement (DAS).} No datasets were generated or analysed during the current study. 


\begin{appendices}
\renewcommand{\theequation}{\thesection.\arabic{equation}}
\section{Existence and uniqueness of solution for (\ref{lad.boussinesq})}\label{Appendix}

The following theorem will show the existence and uniqueness of strong solutions for (\ref{lad.boussinesq}).
\begin{theorem}\label{existence and uniq.}
    There exists $R>0$ such that if, 
\begin{equation*}
    \Vert v\Vert^{2}_{L^{2}(\omega\times (0,T))^{N}}+\Vert v_{0}\Vert^{2}_{L^{2}(\omega\times(0,T))}+\Vert y^{0}\Vert_{V}+\Vert\theta^{0}\Vert_{W_{0}^{1,3/2}(\Omega)} < R.
\end{equation*}
then there exists a unique $(y,p,\theta)$ strong solution of (\ref{lad.boussinesq}) in the class
\begin{equation*}
    \left\{
    \begin{array}{l}
     y\in L^{2}(0,T;H^{2}(\Omega)^{N}\cap V)\cap C^{0}([0,T];V),\, \, y_{t}\in L^{2}(0,T;H)    \vspace{0.1cm}  \\
     \theta\in L^{2}(0,T;W^{2,3/2}(\Omega)),\,\, \theta_{t}\in L^{2}(0,T;L^{3/2}(\Omega)).     
    \end{array}\right.
\end{equation*}
\end{theorem}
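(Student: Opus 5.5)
I will prove Theorem \ref{existence and uniq.} with a fixed-point argument built on the linear results of Section \ref{Sec.1}. Take $R$ small. Work in the Banach space
\[
X=\{(y,\theta): y\in L^{2}(0,T;D(A)),\ y_{t}\in L^{2}(0,T;H),\ \theta\in L^{2}(0,T;W^{2,3/2}(\Omega)),\ \theta_{t}\in L^{2}(0,T;L^{3/2}(\Omega))\},
\]
and let $B_{M}$ be the closed ball of radius $M$ in $X$, with $M$ to be fixed. Given $(\bar y,\bar\theta)\in B_{M}$, define $(y,\theta)=\Phi(\bar y,\bar\theta)$ as the solution of two decoupled linear problems, each with constant viscosity $\nu_{0}$. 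The first is the Stokes system
\[
y_{t}-\nu_{0}\Delta y+\nabla P=v\tilde 1_{\omega}+\nu_{0}\bar\theta e_{N}+\nu_{1}\|\nabla\bar y\|^{2}\Delta\bar y-(\bar y\cdot\nabla)\bar y ,
\]
solved with Lemma \ref{Stokes em V}. The second is the heat equation
\[
\theta_{t}-\nu_{0}\Delta\theta=v_{0}\tilde 1_{\omega}+\nu_{1}\|\nabla\bar y\|^{2}\Delta\bar\theta-\bar y\cdot\nabla\bar\theta+\nu(\nabla\bar y)D\bar y:\nabla\bar y ,
\]
solved with Lemma \ref{parabolic in LpLq} for $r=2$, $s=3/2$. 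I put the nonlocal viscosity part on the right-hand side deliberately, so that only constant-coefficient linear theory is needed.

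The key step is the set of right-hand-side estimates. They repeat the bounds used for $X_{2},X_{3},X_{4}$ and in \eqref{des com y e nabla y}, \eqref{des com nabla e delta}, but now without weights. The term $\nu_{0}\bar\theta e_{N}$ is controlled in $L^{2}(Q)$ through $W^{2,3/2}\hookrightarrow L^{\infty}$ for $N\le 3$ (or $L^{\infty}(0,T;W^{1,3/2})\hookrightarrow L^{\infty}(0,T;L^{3})$). The convection term satisfies $\|(\bar y\cdot\nabla)\bar y\|_{L^{2}(Q)}\le C\|\bar y\|_{L^{\infty}(V)}\|\bar y\|_{L^{2}(D(A))}$. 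The viscosity term satisfies $\|\|\nabla\bar y\|^{2}\Delta\bar y\|_{L^{2}(Q)}\le C\|\bar y\|^{2}_{L^{\infty}(V)}\|\bar y\|_{L^{2}(D(A))}$. In $L^{2}(L^{3/2})$, the term $\bar y\cdot\nabla\bar\theta$ is handled with $H^{1}\hookrightarrow L^{6}$ and $W^{1,3/2}\hookrightarrow L^{3}$. The quadratic source $D\bar y:\nabla\bar y$ is handled with \eqref{imersao cap}: $\||\nabla\bar y|^{2}\|_{L^{2}(L^{3/2})}=\|\nabla\bar y\|^{2}_{L^{4}(L^{3})}$. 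Using $X\hookrightarrow C^{0}([0,T];V)\times C^{0}([0,T];L^{3/2})$, these give
\[
\|\Phi(\bar y,\bar\theta)\|_{X}\le C_{0}\big(R^{1/2}+R+ T^{1/2}M\big)+C(M^{2}+M^{3}+M^{4}).
\]
Here the factor $T^{1/2}M$ comes only from the linear coupling $\nu_{0}\bar\theta e_{N}$. I will remove it either by solving for $\theta$ first (the $\theta$-equation involves $\bar y$ only) and substituting that $\theta$ into the $y$-equation, or by using a cascade estimate.

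With the cascade version, choosing $M$ small and then $R$ small makes $\Phi(B_{M})\subset B_{M}$. The same multilinear bounds applied to differences give $\|\Phi(\bar y_{1},\bar\theta_{1})-\Phi(\bar y_{2},\bar\theta_{2})\|_{X}\le C(M+M^{2}+M^{3})\|(\bar y_{1}-\bar y_{2},\bar\theta_{1}-\bar\theta_{2})\|_{X}$, so $\Phi$ is a contraction for small $M$. Banach's fixed point theorem then yields a solution in the stated class. Continuity $y\in C^{0}([0,T];V)$ follows from $y\in L^{2}(D(A))$, $y_{t}\in L^{2}(H)$. I will rewrite $\nabla\cdot(\nu(\nabla y)Dy)=\nu(\nabla y)\Delta y$ exactly as in Section \ref{Sec.3}.

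Uniqueness in the whole class, not just in the small ball, will come from an energy estimate on the difference of two solutions. I test the $y$-difference with $w$ and with $-\Delta w$ to get $V$-level control, estimate the $\theta$-difference in $L^{2}(L^{3/2})$ by maximal regularity, and close with Gronwall. The term $(\|\nabla y_{1}\|^{2}-\|\nabla y_{2}\|^{2})\Delta y_{2}$ is bounded by $\|\nabla w\|(\|\nabla y_{1}\|+\|\nabla y_{2}\|)\|\Delta y_{2}\|$, which is integrable in time.

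I expect the main obstacle to be the temperature estimate at the $L^{3/2}$ level. The quadratic source only lies in $L^{2}(L^{3/2})$, and the coupling $\bar\theta e_{N}$ has to be placed back in $L^{2}(Q)$, which works only because $W^{2,3/2}\hookrightarrow L^{2}$ is compatible with $N\le 3$. The second delicate point is ordering the smallness choices so that the linear coupling does not prevent closing the ball. For $N=2$, the paper's claim that the data need not be small is not delivered by this smallness scheme. I would not attempt it unless it is needed for Theorem \ref{Teo large time}; the smallness version suffices there.
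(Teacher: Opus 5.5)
Your proposal is correct, but it takes a genuinely different route from the paper. The paper proves existence by Faedo--Galerkin (Stokes eigenfunctions for $y$, Dirichlet eigenfunctions for $\theta$). It tests with $-\lambda_1\Delta y_m$ and $\theta_m$, and keeps $\hat C_3\|\nabla y_m\|+\hat C_4\|\nabla y_m\|^2+\hat C_5\|\nabla y_m\|^6<\lambda_1\nu_0/8$ for all times by a continuity argument that uses smallness of all the data. It then adds bounds on $y_{m,t}$ and $\theta_{m,t}$, passes to the limit by compactness, and only at the end upgrades $\theta$ to $L^2(0,T;W^{2,3/2}(\Omega))$ by $L^p$--$L^q$ parabolic regularity. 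Uniqueness comes from testing the difference system with $-\Delta u$ and with the temperature difference itself, followed by Gronwall.

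Your contraction in $X$ instead moves the nonlocal viscosity to the right-hand side of constant-coefficient problems and reuses unweighted versions of the multilinear bounds of Lemma \ref{F bem definido}. In one step it gives existence, uniqueness in the ball and Lipschitz dependence on $(y^0,\theta^0,v,v_0)$. It needs no compactness, and the $W^{2,3/2}$ regularity of $\theta$ is built in rather than recovered a posteriori. Your cascade (solve for $\theta$ first) correctly keeps the linear coupling $\nu_0\theta e_N$ out of the smallness bookkeeping.

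What the paper's route buys is that it uses the dissipative sign of $\nu_1\|\nabla y\|^2\Delta y$ rather than treating it as a perturbation. It also produces the differential energy inequality that the appendix reuses for the exponential decay in Lemma \ref{energy decay}; your scheme gives no such information. Both are small-data results: the paper's proof also imposes smallness of all four quantities in both dimensions. So your caveat about $N=2$ costs nothing relative to the statement.

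Two details need fixing.

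First, $W^{2,3/2}(\Omega)\hookrightarrow L^\infty(\Omega)$ fails for $N=3$, which is the borderline case. You only need $W^{2,3/2}(\Omega)\hookrightarrow L^2(\Omega)$. This gives $\|\bar\theta\|_{L^2(Q)}\le C\|\bar\theta\|_{L^2(0,T;W^{2,3/2}(\Omega))}$ with no factor $T^{1/2}$.

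Second, in the uniqueness step, make sure you do not put $\nu_1\|\nabla y^1\|^2\Delta(\theta^1-\theta^2)$ on the right-hand side of a constant-coefficient maximal-regularity estimate and try to absorb it. Uniqueness is claimed without smallness of $y^1$, so that absorption is unavailable. The coefficient $\nu(\nabla y^1(t))$ depends only on $t$ and lies in $[\nu_0,C]$, so you can remove it with the time change $s=\int_0^t\nu(\nabla y^1(\tau))\,d\tau$. Alternatively, use the $L^2$ energy estimate for the temperature difference, as the paper does.
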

\begin{proof}
\textit{\textbf{Existence.}}
    We will apply Faedo-Galerkin method to obtain the proof, be orthonormal eigenfunctions of the Stokes operator, i.e, the solutions to 
    \begin{equation*}
    \left\{\begin{array}{lll}
         -\Delta u_{m}+\nabla P_{m} = \lambda_{m}u_{m}& \text{in} & \Omega,  \\
        u_{m}=0 & \text{on} & \partial\Omega,  
    \end{array}\right.    
    \end{equation*}
and $\lbrace w_{m}\rbrace_{m\in\mathrm{N}}$ the basis formed by the eigenfunctions of the Dirichlet Laplacian in $\Omega$. Consider, for $m\in\mathbb{N}$, $U_{m}= span\lbrace u_{1},u_{2},\ldots,u_{m}\rbrace$ and $V_{m}=span\lbrace w_{1},w_{2},\ldots,w_{m}\rbrace$. Let us introduce the finite dimensional Galerkin approximations as follows: find $y_{m}$, $\theta_{m}$, with $y_{m}(t)\in U_{m}$ and $\theta_{m}(t)\in V_{m}$ for all $t$, associated with the initial data $(y^{0},\theta^{0})$, such that
\begin{equation}\label{prob lad.smag. aproximado}
   \left\{ \begin{array}{l}
      (y^{\prime}_{m},u)+((\nu_{0}+\nu_{1}\Vert\nabla y_{m}\Vert^{2})\nabla y_{m},\nabla u)+((y_{m}\cdot\nabla)y_{m},u)=(\nu_{0}\theta_{m}e_{N},u)\vspace{0.1cm}\\
      +(v\tilde{1}_{\omega},u),\forall\, u\in U_{m},\vspace{0.1cm}    \\
       (\theta_{m}^{\prime},w)+((\nu_{0}+\nu_{1}\Vert\nabla y_{m}\Vert^{2})\nabla \theta_{m},\nabla w) + (y_{m}\cdot\nabla \theta_{m},w)=(v_{0}\tilde{1}_{\omega},w)\vspace{0.1cm}\\
       +((\nu_{0}+\nu_{1}\Vert\nabla y_{m}\Vert^{2})D y_{m}:\nabla y_{m},w), \forall\, w\in V_{m}, \vspace{0.1cm}\\
       y_{m}(0)=y^{0}_{m}\rightarrow y^{0}\,\,\, \text{in}\,\,\, V,\,\,\,\theta_{m}(0)=\theta^{0}_{m}\rightarrow\theta^{0}\,\,\, \text{in}\,\,\, L^{2}(\Omega). 
    \end{array}\right.
\end{equation}
The classical ODE theory gives us the existence and uniqueness of a solution for (\ref{prob lad.smag. aproximado}), in local time. By means of the uniform estimates that we will obtain below, we will be able to define such solutions for all time $t$.

\noindent\textbf{Estimate I:} Multiplying the first row of \eqref{prob lad.smag. aproximado} by $\lambda_{1}$, taking $u=-\Delta y_{m}(t)$ and $w=\theta_{m}(t)$ in the first and second equation of \eqref{prob lad.smag. aproximado} and knowing that $\Vert .\Vert_{L^{3}(\Omega}\leq C\Vert .\Vert^{1/2}\Vert .\Vert_{H^{1}(\Omega)}^{1/2}$, we have
\begin{equation}\label{estimate 1 N=2,3}
    \begin{array}{l}
   \dfrac{1}{2}\dfrac{d}{dt}(\lambda_{1}\Vert\nabla y_{m}\Vert^{2} + \Vert\theta_{m}\Vert^{2}) + \dfrac{\nu_{0}}{4}\Vert\nabla\theta_{m}\Vert^{2}   +\nu_{1}\Vert\nabla y_{m}\Vert^{2}\Vert\nabla\theta_{m}\Vert^{2} 
   \vspace{0.1cm}\\
   + \lambda_{1}\nu_{1}\Vert\nabla y_{m}\Vert^{2}\Vert\Delta y_{m}\Vert^{2} 
   + \dfrac{\lambda_{1}\nu_{0}}{8} \Vert\Delta y_{m}\Vert^{2} + \left[\dfrac{\lambda_{1}\nu_{0}}{8} - \hat{C}_{3}(\Omega,\lambda_{1})\Vert\nabla y_{m}\Vert\right.\vspace{0.1cm}\\
   \left. - \hat{C}_{4}(\Omega,\nu_{0},\lambda_{1})\Vert\nabla y_{m}\Vert^{2} -\hat{C}_{5}(\Omega,\nu_{0},\nu_{1},\lambda_{1})\Vert\nabla y_{m}\Vert^{6}\right] \Vert\Delta y_{m}\Vert^{2}\vspace{0.1cm}\\
   \leq   \hat{C}_{2}(\Omega,\nu_{0})\Vert v_{0}\Vert^{2}_{L^{2}(\omega)}
  + \hat{C}_{1}(\nu_{0},\lambda_{1})\Vert v\Vert^{2}_{L^{2}(\omega)^{N}},
   \end{array}
\end{equation}
For simplicity of notation, we will omit the dependencies of the constants already known. Hence, the following statement is valid:
\begin{assumption}
\begin{equation}\label{barA(t)}
\begin{array}{l}
    \bar{A}(t)=   \hat{C}_{3}\Vert\nabla y_{m}(t)\Vert +\hat{C}_{4}\Vert\nabla y_{m}(t)\Vert^{2} + \hat{C}_{5}\Vert \nabla y_{m}(t)\Vert^{6}     <\dfrac{\lambda_{1}\nu_{0}}{8},\,\,
    \forall\, t\in[0,T_{m}].
\end{array}
\end{equation}
\end{assumption}
Indeed, assuming by contradiction that (\ref{barA(t)}) is false then there exist ${t}_{1m}$ such that
\begin{equation*}
    \bar{A}(t)<\dfrac{\lambda_{1}\nu_{0}}{8},\,\, \forall\,\, 0\leq t< {t}_{1m}
\end{equation*}
and
\begin{equation}\label{barA(tm)}
    \bar{A}({t}_{1m})=\dfrac{\lambda_{1}\nu_{0}}{8}.
\end{equation}
By hypothesis, there is $\tilde{\rho}_{0}>0$ such that
\begin{equation*}
    \Vert v\Vert^{2}_{L^{2}(\omega\times (0,T))^{N}}+\Vert v_{0}\Vert^{2}_{L^{2}(\omega\times(0,T))}+\Vert y^{0}\Vert_{V}+\Vert\theta^{0}\Vert_{W_{0}^{1,3/2}(\Omega)} < \tilde{\rho}_{0}.
\end{equation*}
Then, we have
\begin{equation}\label{estimate dado inicial N=2,3}
 \left\{\begin{array}{l}
    \hat{C}_{3}\Vert\nabla y^{0}\Vert +\hat{C}_{4}\Vert\nabla y^{0}\Vert^{2} + \hat{C}_{5}\Vert \nabla y^{0}\Vert^{6}     <\dfrac{\lambda_{1}\nu_{0}}{8},\vspace{0.1cm}\\
        \hat{C}_{1}\Vert v\Vert^{2}_{L^{2}(\omega\times (0,T))^{N}} + \hat{C}_{2}\Vert v_{0}\Vert^{2}_{L^{2}(\omega\times (0,T))} + \dfrac{1}{2}(\lambda_{1}\Vert\nabla y^{0}\Vert^{2}+\Vert\theta^{0}\Vert^{2}) \vspace{0.1cm}\\
       <\min\left\lbrace\dfrac{{\lambda_{1}}^{2}\nu_{0}}{48\hat{C}_{4}}, {\lambda_{1}}^{3}\left(\dfrac{\nu_{0}}{24\sqrt{2}\hat{C}_{3}}\right)^{2},{\lambda_{1}}^{4/3}\left(\dfrac{\nu_{0}}{192\hat{C}_{5}}\right)^{1/3}\right\rbrace. 
 \end{array}\right.   
\end{equation}

Integrating (\ref{estimate 1 N=2,3}) from $0$ to ${t}_{1m}$, we obtain
\begin{equation}\label{apos integrar N=2,3}
    \begin{array}{l}
         \dfrac{1}{2}\left(\lambda_{1}\Vert\nabla y_{m}({t}_{1m})\Vert^{2} + \Vert\theta_{m}({t}_{1m})\Vert^{2}\right) + \dfrac{\nu_{0}}{4}\displaystyle\int^{{t}_{1m}}_{0}\Vert\nabla \theta_{m}\Vert^{2}dt + \dfrac{\lambda_{1}\nu_{0}}{8}\displaystyle\int^{{t}_{1m}}_{0}\Vert\Delta y_{m}\Vert^{2}dt\vspace{0.1cm}\\
         \leq \hat{C}_{1}\Vert v\Vert^{2}_{L^{2}(\omega\times (0,T))^{N}} + \hat{C}_{2}\Vert v_{0}\Vert^{2}_{L^{2}(\omega\times (0,T))} + \dfrac{1}{2}(\lambda_{1}\Vert\nabla y^{0}\Vert^{2}+\Vert\theta^{0}\Vert^{2}). 
    \end{array}
\end{equation}
Then from \eqref{estimate dado inicial N=2,3} and \eqref{apos integrar N=2,3} we arrive at $\bar{A}(t_{1m})<\lambda_{1}\nu_{0}/8$, which contradicts \eqref{barA(tm)}. Therefore, \eqref{barA(t)} holds an we obtain that
\begin{equation}\label{Inequality Estimate I}
    \begin{array}{l}
         \dfrac{1}{2}\left(\lambda_{1}\Vert\nabla y_{m}(t)\Vert^{2} + \Vert\theta_{m}(t)\Vert^{2}\right) + \dfrac{\nu_{0}}{4}\displaystyle\int^{t}_{0}\Vert\nabla \theta_{m}(s)\Vert^{2}ds + \dfrac{\lambda_{1}\nu_{0}}{8}\displaystyle\int^{t}_{0}\Vert\Delta y_{m}(s)\Vert^{2}ds\vspace{0.1cm}\\
         \leq \hat{C}_{1}\Vert v\Vert^{2}_{L^{2}(\omega\times (0,T))^{N}} + \hat{C}_{2}\Vert v_{0}\Vert^{2}_{L^{2}(\omega\times (0,T))} + \dfrac{1}{2}(\lambda_{1}\Vert\nabla y^{0}\Vert^{2}+\Vert\theta^{0}\Vert^{2})\,\forall\, t\in[0,T_{m}]. 
    \end{array}
\end{equation}
As the term on the right side of \eqref{Inequality Estimate I}  is independent of $m$, we can extend the solution $(y_{m},\theta_{m})$ to the entire interval $[0,T]$ and in the same way we can estimate \eqref{Inequality Estimate I} for $t\in[0,T]$. More precisely, 
\begin{equation}\label{estimate I N=3}
    \begin{array}{l}
      \Vert y_{m}\Vert^{2}_{L^{\infty}(0,T;V)} + \Vert \theta_{m}\Vert^{2}_{L^{\infty}(0,T;L^{2}(\Omega))} + \Vert y_{m}\Vert^{2}_{L^{2}(0,T;H^{2}(\Omega)^{N}\cap V)} + \Vert \theta_{m}\Vert^{2}_{L^{2}(0,T;H^{1}_{0}(\Omega))}     \vspace{0.1cm}\\
      \leq C(\Vert y^{0}\Vert^{2}_{V} + \Vert\theta^{0}\Vert^{2} + \Vert v\Vert^{2}_{L^{2}(\omega\times (0,T))^{N}} + \Vert v_{0}\Vert^{2}_{L^{2}(\omega\times (0,T))}). 
    \end{array}
\end{equation}

\noindent\textbf{Estimate II:} Taking $u=y^{\prime}_{m}$ in the first equation of (\ref{prob lad.smag. aproximado}), we obtain after some calculations
\begin{equation*}
   \begin{array}{l}
    \dfrac{1}{2}\displaystyle\int^{t}_{0}\Vert y_{t,m}(s)\Vert^{2}ds + \dfrac{\nu_{0}}{2}\Vert\nabla y_{m}(t)\Vert^{2} + \dfrac{\nu_{1}}{4}\Vert\nabla y_{m}(t)\Vert^{4}\vspace{0.1cm}\\
    \leq C\displaystyle\int^{t}_{0}\Vert\Delta y_{m}(s)\Vert^{2}\Vert\nabla y_{m}(s)\Vert^{2}ds + C\displaystyle\int_{0}^{t}\Vert\theta_{m}(s)\Vert^{2}ds + \Vert v\Vert^{2}_{L^{2}(\omega\times(0,T))^{N}}\vspace{0.1cm}\\ 
   \,\,\,\, + \dfrac{\nu_{0}}{2}\Vert\nabla y^{0}\Vert^{2}
    + \dfrac{\nu_{1}}{4}\Vert\nabla y^{0}\Vert^{4},
    \end{array}
\end{equation*}
therefore, using Estimate I and the Gronwall's Lemma, we arrive at
\begin{equation}\label{estimate II}
    \begin{array}{l}
    \Vert y_{t,m}\Vert^{2}_{L^{2}(0,T;H)} + \Vert y_{m}\Vert^{2}_{L^{\infty}(0,T;V)}\vspace{0.1cm}\\
    \,\,\leq C(\Vert y^{0}\Vert^{2}_{V} + \Vert y^{0}\Vert^{4}_{V} + \Vert\theta^{0}\Vert^{2} + \Vert v\Vert^{2}_{L^{2}(\omega\times (0,T))^{N}} + \Vert v_{0}\Vert^{2}_{L^{2}(\omega\times (0,T))} ). 
    \end{array}
\end{equation}

\noindent\textbf{Estimate III:} Since the $\theta_{m}$ are the eigenfunctions of $-\Delta$ in $H^{1}_{0}(\Omega)$, we have from Estimate I,
\begin{equation}\label{estimate III}
    \Vert\theta_{t,m}\Vert^{2}_{L^{2}(0,T;H^{-1}(\Omega))}\leq C(\Vert y^{0}\Vert^{2}_{V} + \Vert\theta^{0}\Vert^{2} + \Vert v\Vert^{2}_{L^{2}(\omega\times (0,T))^{N}} + \Vert v_{0}\Vert^{2}_{L^{2}(\omega\times (0,T))}).
\end{equation}

From estimates \eqref{estimate I N=3}, (\ref{estimate II}) and (\ref{estimate III}) we can extract subsequences of $\lbrace y_{m}\rbrace$ and $\lbrace \theta_{m}\rbrace$ denoted equal, so that taking the limit $m\rightarrow \infty$ in the equation (\ref{prob lad.smag. aproximado}), $y_{m}$ and $\theta_{m}$ converge to a solution (weak) of \eqref{lad.boussinesq}. Indeed, to obtain the a.e. convergence of nonlocal terms, just use the fact that the sequence $y_m$ is pre-compact in $L^{2}(0,T;V)$. This solution must satisfy 
\begin{equation*}
   \left\{
    \begin{array}{l}
     y\in L^{2}(0,T;H^{2}(\Omega)^{N}\cap V)\cap C^{0}([0,T];V),\, \, y_{t}\in L^{2}(0,T;H)    \vspace{0.1cm}  \\
     \theta\in L^{2}(0,T;H^{1}_{0}(\Omega))\cap L^{\infty}(0,T;L^{2}(\Omega)),\,\, \theta_{t}\in L^{2}(0,T;H^{-1}(\Omega)).     
    \end{array}\right.
\end{equation*}

Furthermore, since $\nu(\nabla y)Dy:\nabla y + v_{0}\tilde{1}_{\omega}\in L^{2}(0,T;L^{3/2}(\Omega))$(see \eqref{X4}) and $\theta^{0}\in W^{1,3/2}_{0}(\Omega)$, from $L^{p}-L^{q}$ regularity for parabolic equation (see, \cite{Robert}), we have $\theta$ solution of
\begin{equation*}
    \hspace{-0.1cm}\left\{\hspace{-0.1cm}
    \begin{array}[c]{lll}
    \theta_{t}-\nu(\nabla y)\Delta\theta + y\cdot\nabla\theta = v_{0}\tilde{1}_{\omega} +   \nu(\nabla y)Dy:\nabla y &\text{in}& Q,\\
   \theta(x,t)=0 &\text{on}& \Sigma,\\
   \theta(x,0)=\theta^{0}(x) &\text{in}& \Omega.
    \end{array}\right.
\end{equation*}
in class $ \theta\in L^{2}(0,T;W^{2,3/2}(\Omega)),\,\, \theta_{t}\in L^{2}(0,T;L^{3/2}(\Omega))$.

This yields \eqref{regularidade das sol.}.

\noindent\textit{\textbf{Uniqueness.}}
Let $(u,q,w)=(y^{1},p^{1},\theta^{1})-(y^{2},p^{2},\theta^{2})$, where $(y^{1},p^{1},\theta^{1})$ and $(y^{2},p^{2},\theta^{2})$ are solutions of problem (\ref{lad.boussinesq}). Then, we got
\begin{equation*}
    \hspace{-0.1cm}\left\{\hspace{-0.1cm}
    \begin{array}[c]{lll}
    u_{t}-\nu_{0}\Delta u - \nu_{1}\Vert\nabla y^{1}\Vert^{2}\Delta y^{1} + \nu_{1}\Vert\nabla y^{2}\Vert^{2}\Delta y^{2} + (u\cdot\nabla)y^{1} + (y^{2}\cdot\nabla)u\vspace{0.1cm}\\
    + \nabla q = \nu_{0}we_{N}, \,  \nabla\cdot u=0    & \text{in} & Q, \\
    w_{t}-\nu_{0}\Delta w -\nu_{1}(\Vert\nabla y^{1}\Vert^{2})\Delta\theta^{1}-\Vert\nabla y^{2}\Vert^{2}\Delta\theta^{2}) + u\cdot\nabla\theta^{1} + y^{2}\cdot\nabla w\vspace{0.1cm}\\
    = \nu_{0}Dy^{1}:\nabla y^{1} - \nu_{0}Dy^{2}:\nabla y^{2} + \nu_{1}\Vert\nabla y^{1}\Vert^{2}Dy^{1}:\nabla y^{1} - \nu_{1}\Vert\nabla y^{2}\Vert^{2}Dy^{2}:\nabla y^{2} &\text{in}& Q,\\
    u(x,t)=0, w(x,t)=0 &\text{on}& \Sigma,\\
    u(x,0)=0,\, w(x,0)=0 &\text{in}& \Omega.
    \end{array}\right.
\end{equation*}
Which we can rewrite as follows
\begin{equation}\label{prob unicidade}
    \hspace{-0.1cm}\left\{\hspace{-0.1cm}
    \begin{array}[c]{lll}
    u_{t}-\nu_{0}\Delta u - \nu_{1}[\Vert\nabla y^{1}\Vert^{2}\Delta u + (\Vert\nabla y^{1}\Vert + \Vert\nabla y^{2}\Vert)(\Vert\nabla y^{1}\Vert - \Vert\nabla y^{2}\Vert)\Delta y^{2}]\vspace{0.1cm}\\
    + (u\cdot\nabla y^{1}) + (y^{2}\cdot\nabla)u + \nabla q = \nu_{0}we_{N}, \,  \nabla\cdot u=0    & \text{in} & Q,\vspace{0.1cm} \\
    w_{t}-\nu_{0}\Delta w - \nu_{1}[\Vert\nabla y^{1}\Vert^{2}\Delta w + (\Vert\nabla y^{1}\Vert + \Vert\nabla y^{2}\Vert)(\Vert\nabla y^{1}\Vert - \Vert\nabla y^{2}\Vert)\Delta \theta^{2}] \vspace{0.1cm}\\
    + u\cdot\nabla\theta^{1} + y^{2}\cdot\nabla w = \nu_{0}(Du:\nabla y^{1} + Dy^{2}:\nabla u) + \nu_{1}[\Vert\nabla y^{1}\Vert^{2} Du:\nabla y^{1}\vspace{0.1cm}\\
    + (\Vert\nabla y^{1}\Vert + \Vert\nabla y^{2}\Vert)(\Vert\nabla y^{1}\Vert - \Vert\nabla y^{2}\Vert)Dy^{2}:\nabla u] &\text{in}& Q,\\
    u(x,t)=0, w(x,t)=0 &\text{on}& \Sigma,\\
    u(x,0)=0,\, w(x,0)=0 &\text{in}& \Omega.
    \end{array}\right.
\end{equation}
Multiplying by $-\Delta u$ and $w$ in first and second line of (\ref{prob unicidade}), respectively, and integrating in $\Omega$, we obtain
\begin{equation*}
    \begin{array}{l}
       \dfrac{1}{2}\dfrac{d}{dt}(\Vert\nabla u\Vert^{2} + \Vert w\Vert^{2}) + (\nu_{0} + \nu_{1}\Vert\nabla y^{1}\Vert^{2})(\Vert\Delta u\Vert^{2} + \Vert\nabla w\Vert^{2}) \vspace{0.1cm}\\
       =\displaystyle\int_{\Omega}(\Vert\nabla y^{1}\Vert + \Vert\nabla y^{2}\Vert)(\Vert\nabla y^{1}\Vert-\Vert\nabla y^{2}\Vert)\Delta y^{2}\Delta u + \displaystyle\int_{\Omega}[(u\cdot\nabla y^{1})\Delta u\vspace{0.1cm}\\
       + (y^{2}\cdot\nabla u)\Delta u] - \displaystyle\int_{\Omega}we_{3}\Delta u + \displaystyle\int_{\Omega}(\Vert\nabla y^{1}\Vert + \Vert\nabla y^{2}\Vert)(\Vert\nabla y^{1}\Vert - \Vert\nabla y^{2}\Vert)\nabla\theta^{2}\nabla w \vspace{0.1cm}\\
       -\displaystyle\int_{\Omega}u\cdot\nabla\theta^{1}w + \displaystyle\int_{\Omega}\nu_{0}[(Du:\nabla y^{1})w + (Dy^{2}:\nabla u)w]\vspace{0.1cm}\\
       +\displaystyle\int_{\Omega}(\Vert\nabla y^{1}\Vert + \Vert\nabla y^{2}\Vert)(\Vert\nabla y^{1}\Vert-\Vert\nabla y^{2}\Vert)Dy^{2}:\nabla u\, w = \displaystyle\sum_{i=1}^{7}L_{i}.
    \end{array}
\end{equation*}
Notice that,
\begin{equation*}
    \hspace{-0.5cm}\begin{array}{l}
        \vert L_{1}\vert\leq \displaystyle\int_{\Omega}C(\Vert\nabla y^{1}\Vert + \Vert\nabla y^{2}\Vert)\Vert\nabla u\Vert \vert\Delta y^{2}\vert\vert\Delta u\Vert\vspace{0.1cm} \\
        \hspace{0.7cm}\leq C(\Vert\nabla y^{1}\Vert + \Vert\nabla y^{2}\Vert)^{2}\Vert\nabla u\Vert^{2}\Vert\Delta y^{2}\Vert^{2} + \dfrac{1}{\epsilon}\Vert\Delta u\Vert^{2};         
    \end{array}
\end{equation*}
Since $N\leq 3$, by continuous embedding $H^{2}(\Omega)\hookrightarrow L^{\infty}(\Omega)$, $H^{1}(\Omega)\hookrightarrow L^{4}(\Omega)$ and inequality $\Vert f\Vert_{H^{2}(\Omega)}\leq C\Vert\Delta f\Vert$ to any $f\in H^{2}(\Omega)\cap H^{1}_{0}(\Omega)$, we achieved
\begin{equation}\label{des unicidade}
    \begin{array}{l}
       \dfrac{1}{2}\dfrac{d}{dt}(\Vert\nabla u\Vert^{2} + \Vert w\Vert^{2}) + (\nu_{0} + \nu_{1}\Vert\nabla y^{1}\Vert^{2})(\Vert\Delta u\Vert^{2} + \Vert\nabla w\Vert^{2}) \vspace{0.2cm}\\
       \leq  \dfrac{6}{\epsilon}\Vert\Delta u\Vert^{2} + C L_{8}\Vert\nabla u\Vert^{2} + C L_{9}\Vert w\Vert^{2},
       \end{array}
\end{equation}
where $$L_{8}= (\Vert\nabla y^{1}\Vert + \Vert\nabla y^{2}\Vert)^{2}(\Vert\Delta y^{2}\Vert^{2}+1) + \Vert\nabla\theta^{2}\Vert^{2} 
     + \Vert\Delta y^{1}\Vert^{2}+ \Vert\Delta y^{2}\Vert^{2} $$ and $$L_{9}= 1+\Vert\nabla\theta^{1}\Vert^{2} + \Vert\Delta y^{1}\Vert^{2} + \Vert\Delta y^{2}\Vert^{2} 
      + (\Vert\nabla y^{1}\Vert + \Vert\nabla y^{2}\Vert)^{2}(\Vert\nabla y^{1}\Vert^{2} + \Vert\nabla y^{2}\Vert^{2})\Vert\Delta y^{2}\Vert^{2}.$$
Taking $\epsilon=12/\nu_{0}$ and integrating (\ref{des unicidade}) from $0$ to $t$,
\begin{equation*}
    \begin{array}{l}
       \Vert\nabla u(t)\Vert^{2} + \Vert w(t)\Vert^{2} + \displaystyle\int_{0}^{t}(\nu_{0} + \nu_{1}\Vert\nabla y^{1}\Vert^{2})(\Vert\Delta u(s)\Vert^{2} + \Vert\nabla w(s)\Vert^{2})ds \vspace{0.1cm} \\
       \leq 0 + \displaystyle\int_{0}^{t}C(L_{8}+L_{9})(\Vert\nabla u(s)\Vert^{2} + \Vert w(s)\Vert^{2})ds.
    \end{array}
\end{equation*}
Hence, applying Gronwall's Lemma, we obtain 
 the uniqueness of the solution.
 \hfill
\end{proof}

\begin{proof}[
\textit{\textbf{Proof of Lemma \ref{energy decay}}}]
Continuing as in the proof of the existence of solution, we will make an additional estimate for the temperature term. More accurately, taking $w=-\Delta\theta_{m}(t)$ in the second equation of \eqref{prob lad.smag. aproximado}, using the inequalities $\Vert .\Vert_{L^{3}(\Omega)}\leq C\Vert.\Vert^{1/2}\Vert .\Vert^{1/2}_{H^{1}(\Omega)}$, $\Vert .\Vert_{L^{4}(\Omega)}\leq C \Vert.\Vert^{1/2}\Vert .\Vert^{1/2}_{H^{1}(\Omega)}$ and \eqref{estimate 1 N=2,3} we deduce
\begin{equation}\label{estimate de energia para decaimento}
    \begin{array}{l}
   \dfrac{1}{2}\dfrac{d}{dt}(\lambda_{1}\Vert\nabla y_{m}\Vert^{2} + \Vert\theta_{m}\Vert^{2} + \Vert\nabla\theta_{m}\Vert^{2}) + \dfrac{\nu_{0}}{4}( \Vert\nabla\theta_{m}\Vert^{2}  +  \Vert\Delta\theta_{m}\Vert^{2} )\vspace{0.1cm}\\

    + \nu_{1}\Vert\nabla y_{m}\Vert^{2}\Vert\Delta\theta_{m}\Vert^{2} + \lambda_{1}\nu_{1}\Vert\nabla y_{m}\Vert^{2}\Vert\Delta y_{m}\Vert^{2} + \dfrac{\lambda_{1}\nu_{0}}{8}\Vert\Delta y_{m}\Vert^{2}\vspace{0.1cm}\\
 +\left(\nu_{1}-\hat{C}_{6}\Vert\nabla y_{m}\Vert^{2}\right)\Vert\nabla y_{m}\Vert^{2}\Vert\nabla\theta_{m}\Vert^{2}

   +\left[\dfrac{\lambda_{1}\nu_{0}}{8} - \hat{C}_{3}\Vert\nabla y_{m}\Vert\right.\vspace{0.1cm}\\
   \left. - \hat{C}_{9}\Vert\nabla y_{m}\Vert^{2} -\hat{C}_{10}\Vert\nabla y_{m}\Vert^{6}\right] \Vert\Delta y_{m}\Vert^{2}
  \leq 0,   
   \end{array}
\end{equation}
where $\hat{C}_{9}=\lbrace \hat{C}_{4},\hat{C}_{7}\rbrace$ and  $\hat{C}_{10}=\lbrace \hat{C}_{5},\hat{C}_{8}\rbrace$. Remembering that $\hat{C}_{i}$, $i=\lbrace 1,\ldots, 10\rbrace$ are constants that may depend on $\Omega, \nu_{0}, \nu_{1}$ and $\lambda_{1}$.

In a similar way to what was done for \eqref{estimate 1 N=2,3} we can obtain that all terms on the left side of \eqref{estimate de energia para decaimento} are positive. Therefore, for any $t\in[0,T]$,
\begin{equation*}
    \begin{array}{c}
\dfrac{1}{2}\dfrac{d}{dt}(\lambda_{1}\Vert\nabla y_{m}(t)\Vert^{2} + \Vert\theta_{m}(t)\Vert^{2} + \Vert\nabla\theta_{m}(t)\Vert^{2}) 
  + \dfrac{\nu_{0}}{4}\Vert\nabla\theta_{m}(t)\Vert^{2}
  + \dfrac{\lambda_{1}\nu_{0}}{8}\Vert\Delta y_{m}(t)\Vert^{2} \leq  0
   \end{array}
\end{equation*}
which we will rewrite in the form
\begin{equation}\label{estimate com v=v0=0}
    \begin{array}{l}
   \dfrac{d}{dt}{\Phi_{m}}(t)
  + \dfrac{\nu_{0}}{2}\Vert\nabla\theta_{m}\Vert^{2}
    
    + \dfrac{\lambda_{1}\nu_{0}}{4}\Vert\Delta y_{m}\Vert^{2}  
  \leq  0,
   \end{array}
\end{equation}
where ${\Phi_{m}}(t)=\lambda_{1}\Vert\nabla y_{m}\Vert^{2} + \Vert\theta_{m}\Vert^{2} + \Vert\nabla\theta_{m}\Vert^{2}$. Note that, 
\begin{equation*}
    \begin{array}{l}
 {\Phi_{m}}(t)\leq \dfrac{\tilde{C}_{1}\nu_{0}}{2}{2}\Vert\nabla\theta_{m}\Vert^{2}
    + \dfrac{\tilde{C}_{2}\lambda_{1}\nu_{0}}{4}\Vert\Delta y_{m}\Vert^{2}\leq \hat{C}\left(\dfrac{\nu_{0}}{2}\Vert\nabla\theta_{m}\Vert^{2}
    + \dfrac{\lambda_{1}\nu_{0}}{4}\Vert\Delta y_{m}\Vert^{2}\right)
    \end{array}
\end{equation*}
where $\tilde{C}_{1}, \tilde{C}_{2} > 0$ and $\hat{C}=\max\lbrace\tilde{C}_{1},\tilde{C}_{2}\rbrace$. 
Thus, from \eqref{estimate com v=v0=0},
\[         \dfrac{d}{dt}{\Phi_{m}}(t)
  + \dfrac{1}{\hat{C}}{\Phi_{m}}(t)\leq 0 
\]
which results in $${\Phi_{m}}(t)\leq e^{(-1/\hat{C})t}{\Phi_{m}}(0)$$
and consequently it can be deduced \eqref{decaimento}.
\end{proof}




\end{appendices}



 \end{document}